\documentclass[reqno,12pt]{amsart}

\usepackage{graphicx, amsmath, amssymb, amscd, amsthm, euscript, psfrag, amsfonts,bm}

\setcounter{tocdepth}{1}


%
%

\theoremstyle{plain}


%
%
\theoremstyle{plain}

\newtheorem{thm}[equation]{Theorem}

\newtheorem{rmk}[equation]{Remark}
\newtheorem{cor}[equation]{Corollary}
\newtheorem{prop}[equation]{Proposition}

\newtheorem{lem}[equation]{Lemma}

\numberwithin{equation}{section}

%


%
%

\newcommand{\sm}{\left(\begin{smallmatrix}}
\newcommand{\esm}{\end{smallmatrix}\right)}
\newcommand{\bpm}{\begin{pmatrix}}
\newcommand{\ebpm}{\end{pmatrix}}

\newcommand{\C}{\mathbb{C}}
\newcommand{\G}{\mathbb{G}}
\newcommand{\N}{\mathbb{N}}
\newcommand{\Q}{\mathbb{Q}}

\newcommand{\R}{\mathbb{R}}
\newcommand{\Z}{\mathbb{Z}}

\newcommand{\bH}{\mathbb{H}}

\newcommand{\bsl}{\backslash}


\newcommand{\op}{\operatorname}
\newcommand{\BR}{\op{BR}}\newcommand{\BMS}{\op{BMS}}\newcommand{\SO}{\op{SO}}
\renewcommand{\G}{\Gamma}\newcommand{\PS}{\op{PS}}\newcommand{\PSL}{\op{PSL}}\newcommand{\T}{\op{T}}
\newcommand{\ba}{\backslash}
\newcommand{\e}{\epsilon}
\newcommand{\z}{\Z}\newcommand{\br}{\R}
\renewcommand{\c}{\C}
\newcommand{\la}{\langle}
\newcommand{\ra}{\rangle}
\newcommand{\s}{\bf s}

\renewcommand{\S}{\mathcal S}

\begin{document}

\title[Equidistribution of horocycles]
{Effective Equidistribution of closed horocycles for geometrically finite surfaces}

\author{Min Lee and Hee Oh}

\address{Mathematics department, Brown university, Providence, RI}\email{minlee@math.brown.edu}
\address{Mathematics department, Brown university, Providence, RI
and Korea Institute for Advanced Study, Seoul, Korea}\email{heeoh@math.brown.edu}

\thanks{The authors are respectively  supported in parts by Simons Fellowship and by NSF Grant \#1068094.}

\begin{abstract}
For a complete hyperbolic surface whose fundamental group
is finitely generated and has critical exponent bigger than $\tfrac 12$, we obtain an effective equidistribution of closed horocycles in
 its unit tangent bundle. This extends a result of Sarnak in 1981 for surfaces of finite area.
We also discuss applications in Affine sieves.
\end{abstract}

\maketitle

\section{Introduction}
Let $G=\PSL_2(\br)$ be the group of orientation preserving isometries of the
hyperbolic plane $\bH^2=\{x+iy: y>0\}$. Let $\G$ be a finitely generated discrete torsion-free
subgroup of $G$, which is not virtually cyclic. 
The quotient space $X:=\G\ba G$ can be identified with the unit tangent bundle of the hyperbolic surface $\G\ba \bH^2$.
 For $x\in \br$ and $y>0$, we
define $$n_x:=\begin{pmatrix} 1& x\\ 0 & 1\end{pmatrix} \quad\text{and} \quad a_y:= \begin{pmatrix} \sqrt y &0 \\  0 & {\sqrt y}^{-1}
                                                                                 \end{pmatrix} .$$
Via the multiplication from the right, the action of $a_y$ on $X$ corresponds to the geodesic flow 
 and the orbits of the subgroup $N:=\{n_x :x\in \br\}$ give rise to the stable horocyclic foliation of  $X$.
For a fixed closed horocycle $[g]N$ in $X$, we consider a sequence of
closed horocycles $[g]Na_y$ as $y\to 0$.


We denote by $\Lambda(\G)$ the limit set of $\G$, the set of accumulation points of
an orbit of $\G$ in the boundary $\partial (\bH^2)=\br \cup\{\infty\}$. 
A point $\xi\in \Lambda(\G)$ is called a parabolic limit point for $\G$ if $\xi$ is the unique fixed point
in $\partial (\bH^2)$ of an element of $\Gamma$.

A horocycle in $\bH^2$ is simply a Euclidean circle tangent to a point in $\partial(\bH^2)$, called the basepoint;
here a circle tangent to $\infty$ is understood as a horizontal line.
 Topological behavior of a horocycle in $\G\ba \bH^2$ is completely determined by its basepoint, say, $\xi$; 
it is closed and non-compact (resp. compact)
if and only if $\xi$ lies outside $\Lambda(\G)$ (resp. $\xi$ is a parabolic limit point of $\G$) \cite{Dalbo}.

When $\G$ is a lattice in $G$, 
$\Lambda(\G)$ is the entire boundary and hence a closed horocycle is necessarily based at a parabolic limit point and compact. In this case,
 Sarnak \cite{Sa} obtained a sharp effective equidistribution: a sequence of
 closed horocycles becomes equidistributed with respect to the $G$-invariant measure 
in $X$ as their length tends to infinity. One can also use the mixing of the geodesic flow via thickening argument
to obtain an effective equidistribution in this case, which yields
less sharp result than Sarnak's. This argument goes back to the 1970 thesis of Margulis \cite{Mt}
and was used by Eskin and McMullen \cite{EM}.
 
When $\G$ is not a lattice, there are always noncompact closed horocycles in $\G\ba \bH^2$, and compact
horocycles exist only if $\G$ contains a parabolic element, or equivalently if there is a cusp in $\G\ba \bH^2$.
Roblin \cite{Ro} showed that 
for a fixed compact piece $N_0$ of $N$, 
the sequence $[g]N_0a_y$  becomes equidistributed in $X$, as $y\to 0$, with respect to an infinite locally
finite measure, called the Burger-Roblin measure (corresponding to the stable horocyclic
foliation).  In \cite{KO1}, it was observed that 
 the relevant dynamics happens only within a compact
part of $N$ even for a noncompact closed horocycle,
 and hence Roblin's theorem also applies to infinite closed horocycles.
But Roblin's proof is non-effective.

\medskip
\noindent{\bf Effective equidistribution of a closed horocycle:}
 Let $0<\delta\le 1$ denote the critical exponent of $\G$, which is also equal to the Hausdorff dimension of
$\Lambda(\G)$. We have $\delta=1$ if and only if $\G$ is a lattice.

Our main goal is to describe an 
effective equidistribution of a closed horocycle in $\G\ba G$.
In the rest of the introduction, we assume that $1/2 < \delta <1$ 
and that the horocycle $[e] N$ is closed in $\G\ba G$.

Set $k_\theta =
 \bpm \cos\theta & \sin\theta \\ -\sin\theta & \cos\theta\ebpm$, $K=\{k_\theta  : 0\le \theta<\pi\}$ and $A=\{a_y: y>0\}$.
We have the Iwasawa decomposition $G=NAK$: any element of $G$ can be uniquely written as
 $n_xa_yk_\theta$ for $n_x\in N, a_y\in A, k_\theta \in K$.
A Casimir operator of $G$ is given as follows:
$$\mathcal C=-y^2\left(\frac{ \partial^2}{\partial x^2} +
\frac{ \partial^2}{\partial y^2} \right) + y \frac{\partial ^2}{\partial x \partial \theta}.$$
For $K$-invariant functions on $G$, $\mathcal C$ acts as
 the (negative of the) hyperbolic Laplacian: $$\Delta =-y^2\left(\frac{ \partial^2}{\partial x^2}+
\frac{ \partial^2}{\partial y^2} \right).$$

By the results of Patterson \cite{Pa}, and of Lax and Phillips \cite{LP}, the Laplace operator $\Delta$ on $L^2(\G\ba \bH^2)=L^2(\G\ba G)^K$
 has only finitely many eigenvalues $$0<\alpha_0=\delta(1-\delta)<\alpha_1\le \cdots
\le \alpha_k <1/4 $$ lying below the continuous spectrum $[1/4,\infty)$. The existence of
a point eigenvalue is the precise reason that our main theorem is stated only for $\delta >1/2$.
Writing $\alpha_1=s_1(1-s_1)$, a positive number
$$0<{\s}_\G <\delta-s_1$$ will be called a spectral gap for $\G$.

The base eigenvalue $\delta(1-\delta)$ is simple and moreover
there exists a {\it positive} eigenfunction $\phi_0\in L^2(\G\ba G)^K$
with $\Delta\phi_0= \delta(1-\delta)\phi_0$. Patterson gave an explicit formula:
$$\phi_{0}(n_xa_yk_\theta) = 
\int_{\Lambda(\G)} \left(\frac{(u^2+1)y}{(x-u)^2 +y^2}\right)^{\delta}   d\nu_i(u)$$
where $\nu_i$ is the Patterson measure on $\Lambda(\G)$ associated to $i\in \bH^2$.
 We normalize $\nu_i$ so that $\|\phi_0\|_2=1$.

Denote by $V$ the unique $G$-subrepresentation of $L^2(\G\ba G)$ on which $\mathcal C$ acts by the scalar 
$\delta(1-\delta)$. The $K$-fixed subspace of $V$ is spanned by $\phi_0$ and
 $V$ decomposes into the orthogonal sum $\oplus_{\ell\in \z} \c \phi_\ell$
where $\phi_\ell\in C^\infty(\G\ba G)$ satisfies
$\phi_\ell(gk_\theta)=e^{2\ell i \theta} \phi_\ell(g)$ for all $g\in \G\ba G$ and
 $k_\theta\in K$ and has unit norm: $\|\phi_\ell\|_2=1$.

We show that there exists $c_{\phi_\ell}\ne 0$ such that for all $y>0$,
$$\int_{(N\cap \Gamma)\ba N} \phi_\ell (n_xa_y) dx = c_{\phi_\ell} \cdot y^{1-\delta} .$$
 For each $\ell\in \z_{\ge 0}$, we have
$$\frac{c_{\phi_{\pm \ell}}}{c_{\phi_0}}= \frac{\sqrt{\Gamma(\delta)\Gamma(\ell +1-\delta)}}{\sqrt{\Gamma(1-\delta)
\Gamma(\delta+\ell)}},$$
up to a unit, where $\G(x):=\int_0^\infty e^{-t} t^{x-1} dt$ denotes the Gamma function  for $x>0$.



The inner product $\la \psi_1, \psi_2 \ra$ in $L^2(\G\ba G)$ is given by
$$\la \psi_1, \psi_2 \ra=\int_{\G\ba G}\psi_1(g)\overline{\psi_2(g)} dg$$ where $dg$ denotes a $G$-invariant measure on $\G\ba G$.
\begin{thm}\label{main2} Suppose that 
$1/2 < \delta <1$. For $\psi\in C_c^\infty\left(\Gamma\bsl G\right)$, as $y\to 0$,
$$ \int_{(N\cap \Gamma)\ba N} \psi(n_x a_y ) \, dx =
 \sum_{\ell \in \z } c_{\phi_\ell} \cdot  \la \psi, \phi_\ell\ra
 \cdot
y^{1-\delta}   +O(\S_3(\psi) y^{1-\delta +\tfrac{2{\s}_\G}5}) $$
where $\sum_{\ell\in \Z}|c_{\phi_\ell} \cdot \la   \psi ,\phi_\ell\ra| \ll \mathcal S_2(\psi).$
Here $\mathcal S_m(\psi)$ denotes the Sobolev norm of $\psi$ of degree $m$.
\end{thm}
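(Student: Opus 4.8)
The plan is to split $\psi$ into its component in the base representation $V$, which yields the main term exactly, and its orthogonal complement, which is handled by an effective form of the mixing/thickening argument of Margulis and Eskin--McMullen, the effectivity being supplied by the spectral gap ${\s}_\G$.

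\emph{The main term.} Write $\psi=\psi_V+\psi^\perp$ with $\psi_V$ the orthogonal projection of $\psi$ onto $V$ and $\psi^\perp\perp V$; since $V=\bigoplus_{\ell\in\z}\c\,\phi_\ell$ we have $\psi_V=\sum_{\ell\in\z}\la\psi,\phi_\ell\ra\,\phi_\ell$, and the already-established identity $\int_{(N\cap\G)\bsl N}\phi_\ell(n_xa_y)\,dx=c_{\phi_\ell}y^{1-\delta}$ gives, after interchanging sum and integral,
\[
\int_{(N\cap\G)\bsl N}\psi_V(n_xa_y)\,dx=\Bigl(\sum_{\ell\in\z}c_{\phi_\ell}\la\psi,\phi_\ell\ra\Bigr)\,y^{1-\delta}.
\]
The absolute convergence of this series, and the bound $\sum_\ell|c_{\phi_\ell}\la\psi,\phi_\ell\ra|\ll\mathcal S_2(\psi)$, follow from two inputs: by Stirling the explicit ratio $c_{\phi_{\pm\ell}}/c_{\phi_0}$ is $\asymp\ell^{1/2-\delta}$ as $\ell\to\infty$, hence \emph{bounded} because $\delta>1/2$; and differentiating $m$ times in the $K$-variable (using $\phi_\ell(gk_\theta)=e^{2\ell i\theta}\phi_\ell(g)$ and the skew-adjointness of the generator of the $K$-action) gives $|\la\psi,\phi_\ell\ra|\ll_m|\ell|^{-m}\mathcal S_m(\psi)$, so $m=2$ suffices. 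Thus $\psi_V$ contributes exactly the asserted main term.

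\emph{The error term.} It remains to show $\int_{(N\cap\G)\bsl N}\psi^\perp(n_xa_y)\,dx=O\bigl(\mathcal S_3(\psi)\,y^{1-\delta+\frac{2{\s}_\G}{5}}\bigr)$. Set $Y_y=(N\cap\G)\bsl N\,a_y$, the closed horocycle of height $y$, so that $\int_{Y_y}\psi^\perp=\int_{Y_1}\psi^\perp(g\,a_y)$, which, with $t=-\log y\to\infty$, is the geodesic flow applied to the fixed closed horocycle $Y_1$ in its expanding direction. Two estimates are combined. First, since $\phi_0>0$ is bounded below on any fixed compact set, for every $F$ with fixed compact support
\[
\Bigl|\int_{Y_y}F\Bigr|\le\|F/\phi_0\|_\infty\int_{Y_y}\phi_0=\|F/\phi_0\|_\infty\,c_{\phi_0}\,y^{1-\delta}\ll y^{1-\delta}\,\|F\|_\infty,
\]
which we apply both to $\psi^\perp$ and to its first derivatives; in particular the ``base scale'' of the problem is $y^{1-\delta}$. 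Second, effective equidistribution of the expanding horocycle $Y_y$ towards the Burger--Roblin measure $m^{\BR}$, with an exponential error: this is obtained by transferring to $m^{\BR}$ along the stable horocyclic foliation the effective mixing of the geodesic flow for the Bowen--Margulis--Sullivan measure $m^{\BMS}$, which holds with exponential rate governed by ${\s}_\G$ as a consequence of the isolation of the base eigenvalue $\delta(1-\delta)$ in the spectrum of $\Delta$ (Patterson, Lax--Phillips). Since $\psi^\perp\perp V$, the main term $\sum_\ell c_{\phi_\ell}\la\psi^\perp,\phi_\ell\ra\,y^{1-\delta}$ of $\int_{Y_y}\psi^\perp$ vanishes, so a \emph{smoothed} version of this integral --- obtained by thickening $Y_y$ transversally at a scale $h$ --- is $\ll h^{-b}\,y^{1-\delta+{\s}_\G}\,\mathcal S_3(\psi)$ for an explicit $b>0$ (the power $h^{-b}$ being the Sobolev norm of the smoothed transversal bump entering the mixing estimate), while replacing $\int_{Y_y}\psi^\perp$ by its smoothing costs only $\ll h\,y^{1-\delta}\,\mathcal S_3(\psi)$ by the first estimate. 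Optimising $h$ balances the two and produces the exponent $1-\delta+\tfrac{2{\s}_\G}{5}$; the value $\tfrac25$ is exactly the outcome of this balance with the available $b$.

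\emph{Main difficulty.} The core of the argument is the treatment of $\psi^\perp$. Organising the proof this way isolates all the analytic input: the $V$-part, being spanned by the explicit functions $\phi_\ell$, needs no spectral theory beyond what is quoted above, so the only nontrivial estimate is the effective equidistribution of the expanding horocycle; and there, rather than spectrally decomposing $\psi^\perp$ into Eisenstein series --- which would require the meromorphic continuation and growth estimates for Eisenstein series on geometrically finite surfaces, and would in principle give a sharper exponent at a heavier cost --- one uses effective mixing, whose rate is tied to ${\s}_\G$. The delicate point is the thickening: the arc-length measure on $Y_y$ is not the conditional measure of $m^{\BR}$ on horocycles (the latter being the singular Patterson--Sullivan measure), and the compact support of $\psi$ forces one to work only with the bounded part of the infinitely long horocycle $Y_y$; this is what degrades the clean rate ${\s}_\G$ to $\tfrac{2{\s}_\G}{5}$, and the degree $3$ Sobolev norm records the derivatives spent in the smoothing.
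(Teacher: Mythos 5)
The core idea --- spectral decomposition into the base representation $V_\delta$ plus its orthogonal complement, thickening the horocycle into a transversal neighborhood, exploiting the spectral gap for the complement, and balancing the smoothing scale against the spectral-gap saving --- is the right one and is what the paper does. But there is a genuine gap in the way you organize the argument, and it is not merely cosmetic.

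You split $\psi=\psi_V+\psi^\perp$ \emph{before} thickening, and then propose to bound $\int_{(N\cap\G)\bsl N}\psi^\perp(n_xa_y)\,dx$ by a smoothing-plus-mixing argument applied to $\psi^\perp$ alone. The trouble is that $\psi^\perp=\psi-\psi_V$ is not compactly supported: $\psi_V$ is a globally defined $L^2$ function (a sum of $\phi_\ell$'s). Your ``first estimate'' is written as $|\int_{Y_y}F|\le\|F/\phi_0\|_\infty\int_{Y_y}\phi_0$, which is correct, but you then silently replace $\|F/\phi_0\|_\infty$ by $\|F\|_\infty$ --- and for a function $F$ without compact support this does not follow, since $\phi_0(n_xa_y)\to 0$ as $|x|\to\infty$ when $\infty\notin\Lambda(\G)$. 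Controlling $\|\psi^\perp/\phi_0\|_\infty$ and, worse, $\|(\psi^\perp-T_h\psi^\perp)/\phi_0\|_\infty$ by $h\,\mathcal S_3(\psi)$ would require knowing that $\psi_V$ and all its derivatives are pointwise dominated by $\phi_0$ uniformly; Corollary \ref{ul} gives $|\phi_\ell|\ll\phi_0$ on the horocycle, but no such uniform bound for the derivatives of $\phi_\ell$ is established. The thickening lemma (Lemma \ref{l:hat_psi}), which your smoothing step implicitly relies on, also produces a compactly supported majorant $\widehat\psi$ and is stated only for $\psi\in C_c^\infty$; it cannot be applied to $\psi^\perp$ directly. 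The paper sidesteps all of this by \emph{first} replacing $\psi^N(a_y)$ with the matrix coefficient $\langle a_y\psi,\rho_{\eta,\epsilon}\rangle$ (for the compactly supported $\psi$), and only \emph{then} decomposing that matrix coefficient spectrally as $\sum_\ell\langle\psi,\phi_\ell\rangle\langle a_y\phi_\ell,\rho_{\eta,\epsilon}\rangle$ plus an $O(y^{1-s_1}\mathcal S_1(\psi)\mathcal S_1(\rho_{\eta,\epsilon}))$ remainder. In that order, no infinite-support function is ever integrated over the horocycle or thickened.

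Two further points. First, you invoke ``effective mixing of the geodesic flow for $m^{\BMS}$, transferred to $m^{\BR}$ along the stable foliation'' as a black box. This is a legitimate strategy in general, but it is a detour the present paper does not take: the only analytic input used here is the decay of $\langle a_y\psi_1^\perp,\psi_2\rangle\ll y^{1-s_1}\mathcal S_1(\psi_1)\mathcal S_1(\psi_2)$ for $\psi_1^\perp\perp V_\delta$, quoted from the spectral-gap machinery of \cite{KO2}; no BMS measure, no mixing statement, no transfer lemma is needed. Invoking the BMS route would require establishing effective mixing of $(\G\bsl G, m^{\BMS}, A)$ and the transfer, which is considerably more machinery than what the proof actually uses. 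Second, the paper obtains the remainder $(\epsilon+y)^k\mathcal S_1(\psi_k)$ by \emph{iterating} the thickening inequality $k$ times with successive majorants $\psi_j=\widehat{\psi_{j-1}}$; this iteration, and the resulting need to take $k$ larger than an explicit function of $\delta-s_1$, is what makes the $\epsilon^k$ term negligible and is absent from your sketch. The main-term computation in your write-up --- interchanging the sum $\sum_\ell\langle\psi,\phi_\ell\rangle$ with the horocyclic integral using the domination $|\phi_\ell|\ll\phi_0$ and Lemma \ref{haha}, and bounding the sum by $\mathcal S_2(\psi)$ via Stirling --- is correct and matches the paper.
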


\begin{rmk}
\rm
\begin{enumerate}
 \item  We explicitly compute $\phi_{\pm \ell}$, up to a unit, (Theorem \ref{l:rasing-lowering}):
\begin{multline*}
\phi_{\pm \ell}(n_xa_yk_\theta) =\\
e^{\pm 2\ell i\theta} \tfrac{\sqrt{\Gamma(\delta+\ell)\Gamma(1-\delta)}}{\sqrt{\Gamma (\delta) \Gamma(1-\delta+\ell)}}
\int_{\Lambda(\G)}  \left(\frac{(x-u)\mp iy}{(x-u)\pm iy}\right)^{\ell}  \left(\frac{(u^2+1)y}{(x-u)^2 +y^2}\right)^{\delta} d\nu_i(u).
\end{multline*}

\item We note that $\phi_{-\ell}c_{\phi_{-\ell}}=\overline{ \phi_\ell c_{\phi_\ell}}$  for all $\ell \in \Z$.
Indeed, this is an important observation which clarifies a point that
 the main term in Theorem \ref{main2} is a real number for a real-valued function $\psi$.
\item For
smooth functions on the surface $\G\ba \bH^2$, an effective result 
 was obtained in \cite{KO1} and our proof follows the same general strategy
but working with all different $K$-types of base eigenfunctions $\phi_\ell$'s 
 as opposed to studying only the trivial $K$-type $\phi_0$.
\item In \cite{LO}, we obtain an effective equidistribution of
closed horospheres in the unit tangent bundle of hyperbolic $3$ manifold $\Gamma\ba \bH^3$
when the critical exponent of $\Gamma$ is between $1$ and $2$ and 
use this result for an effective counting of circles in Apollonian circle packings.

\end{enumerate}
\end{rmk}

\medskip

Define the measure $\tilde m^{\BR}_{N}$ on $ G$ in the Iwasawa coordinates $G=KAN$:
for $\psi\in C_c( G)$,
$$\tilde m^{\BR}_N(\psi)=
\int_{KAN}\psi(k a_yn_x)y^{\delta -1} dx dyd\nu_i(k(0)) .$$

This measure is left $\G$-invariant and right $N$-invariant,
and the Burger-Roblin measure $m^{\BR}_{N}$ (associated to the stable horospherical subgroup $N$) 
is the measure on $\G\ba G$ induced from $\tilde m^{\BR}_N$.
 The Burger-Roblin measure 
is an infinite measure whenever $0<\delta<1$ \cite{OS} and coincides with a Haar measure
when $\delta=1$.

Theorem \ref{main2} can also be stated as follows:
\begin{thm}\label{main3} Let $1/2 <\delta\le 1 $.
For any $\psi\in C_c^\infty\left(\Gamma\ba G\right)$, as $y\to 0$,
\begin{equation*} \int_{(N\cap \Gamma)\ba N} \psi(n_x a_y ) \, dx =
 \kappa_\G  \cdot m_N^{\BR}(\psi)\cdot y^{1-\delta} 
  +O(\mathcal S_3(\psi)\cdot y^{(1-\delta)+\tfrac{2{\bf s_\G}}5}) \end{equation*}
where
$\kappa_\G=\tfrac{\sqrt{\pi}\G(\delta-\tfrac 12)}{\Gamma(\delta)}
 \cdot \int_{\textsc{$(N\cap \G)\ba N$}}{(x^2+1)^{\delta}} {d\nu_i(x)} >0 $. 
\end{thm}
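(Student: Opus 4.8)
The plan is to deduce Theorem~\ref{main3} directly from Theorem~\ref{main2}, since the only content of Theorem~\ref{main3} beyond Theorem~\ref{main2} is the identification of the main term $\sum_{\ell\in\Z} c_{\phi_\ell}\la\psi,\phi_\ell\ra\, y^{1-\delta}$ with $\kappa_\G\, m_N^{\BR}(\psi)\, y^{1-\delta}$, together with the claim $\kappa_\G>0$ and the extension to $\delta=1$. First I would observe that both sides of the asserted identity
$$\sum_{\ell\in\Z} c_{\phi_\ell}\,\la\psi,\phi_\ell\ra = \kappa_\G\cdot m_N^{\BR}(\psi)$$
are continuous linear functionals on $C_c^\infty(\G\ba G)$ that are invariant under the right $N$-action and transform the right way under the geodesic flow: indeed, Theorem~\ref{main2} forces the left-hand side to equal $\lim_{y\to 0} y^{\delta-1}\int_{(N\cap\G)\ba N}\psi(n_xa_y)\,dx$, which is manifestly $N$-invariant, and the same limit computed against $m_N^{\BR}$ using the explicit Iwasawa-coordinate formula for $\tilde m^{\BR}_N$ gives the right-hand side up to the constant $\kappa_\G$. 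So the identity reduces to computing a single normalizing constant.

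The key computation is therefore to evaluate $\kappa_\G$ explicitly. For this I would test the abstract identity on the single function $\phi_0$ (or rather approximate it suitably since $\phi_0\notin C_c^\infty$; more precisely I would first prove the identity of functionals and then evaluate both sides on $\phi_0$, or run the computation with an explicit auxiliary compactly supported test function and pass to the limit). On the left, $\la\phi_0,\phi_\ell\ra=\delta_{\ell,0}$ by orthogonality of the $K$-types, so the left side is just $c_{\phi_0}$. On the right, I need $m_N^{\BR}(\phi_0)$. Using Patterson's explicit formula $\phi_0(n_xa_yk_\theta)=\int_{\Lambda(\G)}\big(\tfrac{(u^2+1)y}{(x-u)^2+y^2}\big)^\delta d\nu_i(u)$ together with the $KAN$-form of $\tilde m^{\BR}_N$, the pairing unfolds to an integral over $(N\cap\G)\ba N$ of $(x^2+1)^\delta\,d\nu_i(x)$ times a $\delta$-dependent beta-type integral $\int_0^\infty \tfrac{y^\delta}{(1+y^2)^\delta}\,\tfrac{dy}{y}$-flavored factor, which is a standard Gamma-function evaluation producing $\tfrac{\sqrt\pi\,\Gamma(\delta-1/2)}{\Gamma(\delta)}$. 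Comparing, $\kappa_\G = c_{\phi_0}/m_N^{\BR}(\phi_0)$ in the appropriate sense, and the bookkeeping yields exactly the stated formula
$$\kappa_\G=\frac{\sqrt\pi\,\Gamma(\delta-1/2)}{\Gamma(\delta)}\cdot\int_{(N\cap\G)\ba N}(x^2+1)^\delta\,d\nu_i(x).$$
Positivity of $\kappa_\G$ is then immediate: $\Gamma(\delta-1/2)>0$ since $\delta>1/2$, and the integrand $(x^2+1)^\delta$ is positive while $\nu_i$ is a nonzero positive measure. Finally, for $\delta=1$ one invokes the classical case (Sarnak) where $m_N^{\BR}$ is a Haar measure and the identity is known; alternatively the same unfolding goes through verbatim with $\nu_i$ the rotation-invariant probability measure.

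The main obstacle I expect is the analytic care needed because $\phi_0$ is not compactly supported, so applying Theorem~\ref{main2} to it is not literally legitimate; I would handle this either by a truncation/approximation argument (cutting $\phi_0$ off far into the cusps and controlling the error using the $L^2$-decay of $\phi_0$ and the Burger-Roblin mass of cusp neighborhoods) or, cleanly, by first establishing the identity $\sum_\ell c_{\phi_\ell}\la\psi,\phi_\ell\ra=\kappa_\G m_N^{\BR}(\psi)$ as functionals on $C_c^\infty(\G\ba G)$ — which only requires matching the two $N$-invariant functionals on a spanning set — and only afterwards pinning down $\kappa_\G$ via the explicit Gamma-function integral. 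The Gamma-function bookkeeping itself (tracking the unit ambiguities in $c_{\phi_\ell}$ and the constant in Patterson's $\phi_0$ normalization $\|\phi_0\|_2=1$) is routine but must be done consistently with the normalization of $\nu_i$ fixed earlier in the paper.
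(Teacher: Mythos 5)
Your approach is genuinely different from the paper's, and as written it has two substantive gaps.

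The paper deduces Theorem~\ref{main3} in one line from Theorems~\ref{main2} and~\ref{brd}. Theorem~\ref{brd} in turn is obtained by comparing the main term of Theorem~\ref{main2} with two results of Roblin cited in Section~6: the non-effective equidistribution theorem (Theorem~\ref{ros}), which already identifies $\lim_{y\to 0}y^{\delta-1}\psi^N(a_y)$ with $\tfrac{\mu^{\PS}_N((N\cap\G)\ba N)}{|m^{\BMS}|}m^{\BR}_N(\psi)$; and Roblin's thesis result (Theorem~\ref{rot}), which under the normalization $\|\phi_0\|_2=1$ gives $1/|m^{\BMS}|=\int_\br(1+x^2)^{-\delta}dx=\sqrt\pi\,\Gamma(\delta-\tfrac12)/\Gamma(\delta)$. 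Combined with $\mu^{\PS}_N((N\cap\G)\ba N)=\int_{(N\cap\G)\ba N}(1+x^2)^{\delta}d\nu_i$, this produces $\kappa_\G$ directly. No new computation is performed.

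Your plan tries to sidestep both Roblin inputs, and both steps fail as sketched. First, the proportionality of $\psi\mapsto\sum_\ell c_{\phi_\ell}\la\psi,\phi_\ell\ra$ with $m_N^{\BR}$ cannot be obtained just from the observation that both are continuous $N$-invariant functionals. $N$-invariant Radon measures on $\G\ba G$ are not unique (there are, for instance, measures supported on closed horocycles), and the rigidity you are appealing to is precisely the Burger--Roblin classification of $N$-invariant ergodic measures, i.e., the content of \cite{Ro} that the paper cites. Matching on ``a spanning set'' is circular: the only way to compute the left functional on test functions is via Theorem~\ref{main2}, and the only way to see that the resulting limit is a multiple of $m_N^{\BR}$ is the equidistribution theorem itself. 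Second, the evaluation of $\kappa_\G$ via $\kappa_\G=c_{\phi_0}/m_N^{\BR}(\phi_0)$ is miscalculated. Plugging $\psi=\phi_0$ into Theorem~\ref{brd} (which, modulo the compact-support issue you already flag, is the right test) gives $m_N^{\BR}(\phi_0)=c_0/\kappa_\G$; in the case $\infty\notin\Lambda(\G)$, Proposition~\ref{co} yields $c_0=\kappa_\G$, so $m_N^{\BR}(\phi_0)=1$, \emph{not} the single Patterson integral times the Gamma factor you claim. Your asserted unfolding would give $m_N^{\BR}(\phi_0)=\kappa_\G$, which under $\kappa_\G=c_{\phi_0}/m_N^{\BR}(\phi_0)$ forces $\kappa_\G=\sqrt{c_0}$, contradicting the stated formula. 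The missing normalization input --- relating $m_N^{\BR}(\phi_0)$ to $\|\phi_0\|_2=1$ --- is exactly Theorem~\ref{rot}. Note also that in the parabolic case ($0<x_0<\infty$), Proposition~\ref{co} gives $c_0>0$ with no explicit value, so there is no formula for $c_{\phi_0}$ to compare against without Roblin's theorems.

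In summary, the identity of functionals and the explicit Gamma constant are not derivable from Theorem~\ref{main2} alone; you would be re-proving Roblin's equidistribution theorem and his thesis identity rather than doing routine bookkeeping, and the bookkeeping as written is in fact inconsistent.
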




\medskip

\noindent{\bf Effective orbital counting on a cone:} 
Let $Q$ be a ternary indefinite quadratic form over $\Q$  and $v_0\in \Q^3$ be a non-zero vector
such that $Q(v_0)=0$. 

Let $G_0:=\SO_Q(\br)^\circ$ and $\G< G_0(\z)$ be a finitely generated subgroup
with $\delta>1/2$. 
 For a square-free integer $d$, 
consider the subgroup of $\G$ which stabilizes $v_0$ mod $d$:
$$\G_d:=\{\gamma\in \G: v_0\gamma \equiv v_0 \mod d\}.$$

To define a sector in the cone $\{Q=0\}$, 
let $\iota:\PSL_2(\br)\to G_0$ be an isomorphism such that $\iota(N)=\{g\in G_0: v_0g=v_0\}$.
Fix a norm $\|\cdot \|$ on $\br^3$. For any subset $\Omega\subset K$ and $T>0$,
define the sector
$$S_T(\Omega):=\{v\in v_0A\Omega : \|v\|<T\} .$$
By a theorem of Bourgain, Gamburd and Sarnak \cite{BGS}, there exists
a spectral gap, say ${\s}_0$,  uniform for all $\G_d$, $d$ square free. 
\begin{thm}\label{ec2m}
Suppose that $\Omega$ has only finitely many connected components.
 As $T\to \infty$, we have $$\#\{v\in v_0\Gamma_d\cap S_T(\Omega)\}=
\frac{ \Xi_{v_0}(\G, \Omega)}{{[\G:\G_d]}}  \cdot  T^\delta + O(T^{\delta-\tfrac{4{\s}_0}{55}}).$$
\end{thm}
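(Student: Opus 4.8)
The plan is to convert the orbital count into an integral of a smooth function over a closed horocycle in $\Gamma_d\ba G_0$ and to feed it into Theorem~\ref{main3} (equivalently Theorem~\ref{main2}) for each $\Gamma_d$, whose error term is controlled \emph{uniformly in $d$} by the common spectral gap ${\s}_0$ of \cite{BGS}. Identify $G_0$ with $G=\PSL_2(\R)$ via $\iota$, so that $v_0$ becomes a point with $\mathrm{Stab}(v_0)=N$ and $v_0\Gamma_d$ is identified with $(N\cap\Gamma_d)\ba\Gamma_d$; one then computes $v_0a_yk_\theta=y^{-1}(v_0k_\theta)$, so that in the resulting coordinates $v_0G\cong A\times K$, $v=v_0a_yk_\theta\leftrightarrow(y,\theta)$, we have
\[
S_T(\Omega)\ \longleftrightarrow\ \{(y,\theta):\ \theta\in\Omega,\ \ y>\rho(\theta)/T\},\qquad\rho(\theta):=\|v_0k_\theta\|>0 .
\]
Since $v_0\Gamma_d$ lies in a fixed lattice of $\R^3$ and hence stays bounded away from $0$, the sector may be truncated to $\{y<y_1\}$ for a fixed $y_1$ without changing the count, so the whole problem lives in the regime $\rho(\theta)/T<y<y_1$, i.e. $y\to0$, which is exactly where Theorem~\ref{main3} is effective.

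For $g\in G$ set $F_T(g):=\sum_{\gamma\in(N\cap\Gamma_d)\ba\Gamma_d}\mathbf 1_{S_T(\Omega)}(v_0\gamma g)$, a bounded $\Gamma_d$-automorphic function with $F_T(e)=\#\{v\in v_0\Gamma_d\cap S_T(\Omega)\}$. Sandwiching $\mathbf 1_{S_T(\Omega)}$ between smooth regularizations $\mathbf 1^{\pm}_{S_T(\Omega)}$ supported within $e^{\pm\e}$ of $\partial S_T(\Omega)$ produces automorphic $F_T^{\pm}$ with $F_T^{-}\le F_T\le F_T^{+}$; pairing them over $\Gamma_d\ba G$ against a nonnegative bump $\omega_\e\in C_c^\infty(\Gamma_d\ba G)$, $\int\omega_\e=1$, supported in the $\e$-ball of $[e]$, the standard $\Gamma_d$-unfolding gives
\[
\la F_T^{\pm},\omega_\e\ra=\int_{(N\cap\Gamma_d)\ba G}\mathbf 1^{\pm}_{S_T(\Omega)}(v_0g)\,\omega_\e(g)\,dg .
\]
Writing $g=n_xa_yk_\theta$ with $x$ over the horocycle $(N\cap\Gamma_d)\ba N$ —a closed horocycle (noncompact unless $v_0$ is a parabolic fixed point of $\Gamma$), which the hypotheses ensure, so that Theorem~\ref{main2} applies— together with $y>0$, $\theta\in K$, and $dg\propto y^{-2}\,dx\,dy\,d\theta$, the right side becomes
\[
\mathrm{const}\cdot\int_\Omega\int_{\rho(\theta)/T}^{y_1}\Bigl(\int_{(N\cap\Gamma_d)\ba N}\omega_\e^{\theta}(n_xa_y)\,dx\Bigr)\frac{dy}{y^2}\,d\theta\ +\ O(1),\qquad \omega_\e^{\theta}(h):=\omega_\e(hk_\theta),
\]
up to the regularizations. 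The inner integral is a closed-horocycle average, to which Theorem~\ref{main3} for $\Gamma_d$ applies: it equals $\kappa_{\Gamma_d}\,m^{\BR}_N(\omega_\e^{\theta})\,y^{1-\delta}+O(\mathcal S_3(\omega_\e^{\theta})\,y^{1-\delta+2{\s}_0/5})$, and integrating the main part against $y^{-2}\,dy$ over $(\rho(\theta)/T,y_1)$ produces $\tfrac1\delta\,\kappa_{\Gamma_d}\,m^{\BR}_N(\omega_\e^{\theta})\,\rho(\theta)^{-\delta}\,T^{\delta}+O(1)$. Letting $\e\to0$, $m^{\BR}_N(\omega_\e^{\theta})$ tends to the Burger--Roblin density of $\Gamma_d\ba G$ at $[e]k_\theta^{-1}$; as $\Gamma$ and each $\Gamma_d$ share the same limit set and Patterson measure, this density and $\kappa_{\Gamma_d}$ — like $\phi_\ell^{\Gamma_d}=[\Gamma:\Gamma_d]^{-1/2}\phi_\ell^{\Gamma}\circ\pi$ and $c_{\phi_\ell}^{\Gamma_d}$ — differ from the $\Gamma$-quantities precisely by the covering degree $[\Gamma:\Gamma_d]$, so that after integrating over $\theta\in\Omega$ against $\rho(\theta)^{-\delta}\,d\theta$ everything assembles into
\[
\#\{v\in v_0\Gamma_d\cap S_T(\Omega)\}=\frac{\Xi_{v_0}(\Gamma,\Omega)}{[\Gamma:\Gamma_d]}\,T^{\delta}+(\text{errors}),\qquad \Xi_{v_0}(\Gamma,\Omega)=\tfrac1\delta\,\kappa_\Gamma\!\int_\Omega\rho(\theta)^{-\delta}\,\beta_\Gamma(\theta)\,d\theta>0,
\]
where $\beta_\Gamma(\theta)>0$ is the Burger--Roblin density of $\Gamma\ba G$ at $[e]k_\theta^{-1}$. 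The reason all $K$-types $\phi_\ell$ of Theorem~\ref{main2} are needed, rather than only $\phi_0$ as in \cite{KO1}, is precisely that the $\theta$-dependence above records the sector $\Omega$.

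Two errors then compete. The regularization costs $O(\e\,T^\delta)$: by the same sandwich, both $|F_T(e)-\la F_T,\omega_\e\ra|$ and $\la F_T^{+}-F_T^{-},\omega_\e\ra$ are bounded by applying the asymptotic just obtained to the single test object $\mathbf 1^{+}_{S_T(\Omega)}-\mathbf 1^{-}_{S_T(\Omega)}$, whose $y$-integral is $O(\e)$ times the main term — this breaks the apparent circularity, as no independent a~priori count is ever needed. The equidistribution error is $O(\mathcal S_3(\omega_\e)\,T^{\delta-2{\s}_0/5})$, and for a bump on the $\e$-ball in the $3$-dimensional group $G$ one has $\mathcal S_3(\omega_\e)\ll\e^{-9/2}$. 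Balancing $\e\,T^{\delta}\asymp\e^{-9/2}\,T^{\delta-2{\s}_0/5}$ gives $\e\asymp T^{-4{\s}_0/55}$ and the claimed bound $O(T^{\delta-\tfrac{4{\s}_0}{55}})$, uniform over square-free $d$ because ${\s}_0$ is. The finiteness of the number of components of $\Omega$ is used only in the sandwich step, to keep the $\mathcal S_m$-norms of the majorant and minorant independent of $T$. The principal obstacle is the combination of (i) making the norm-versus-$(y,\theta)$ dictionary precise enough that the $y$-integral converges to a constant times $T^\delta$ and the $\theta$-integral yields a genuine positive $\Xi_{v_0}(\Gamma,\Omega)$, and (ii) the regularization estimate of the previous sentence; the unfolding, the truncation at $y_1$, and the passage from $\Gamma$ to $\Gamma_d$ through the covering are routine.
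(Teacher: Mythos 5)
Your proposal is essentially the paper's own argument: unfold the count $\#\{v\in v_0\Gamma_d\cap S_T(\Omega)\}$ into an iterated integral over $NAK$, recognize the inner $N$-integral as a closed-horocycle average, feed Theorem~\ref{main3}/\ref{main2} into it, integrate $y^{-1-\delta}\,dy$ to produce $\delta^{-1}\rho(\theta)^{-\delta}T^\delta$, treat the $\theta$-integral against the Burger--Roblin density, and balance $\e T^\delta$ against $\mathcal S_3(\omega_\e)\,T^{\delta-2\s_0/5}$ with $\mathcal S_3(\omega_\e)\asymp\e^{-9/2}$ to get the exponent $4\s_0/55$; the passage from $\Gamma$ to $\Gamma_d$ via $\nu_i^{\Gamma_d}=[\Gamma:\Gamma_d]^{-1/2}\nu_i^{\Gamma}$ and the BGS uniform gap is also exactly the paper's.

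The one step you treat too lightly is the sandwich that relates $F_T(e)$ to $\la F_T^{\pm},\omega_\e\ra$. The inequality $F_T^-\le F_T\le F_T^+$ pointwise only controls $\la F_T,\omega_\e\ra$, and to get from there back to $F_T(e)$ you must know that right-multiplication by any $g$ in the $\e$-ball moves $\partial S_T(\Omega)$ only $O(\e)$ in the $(y,\theta)$ coordinates of $v_0AK$, uniformly in $T$. This is precisely the content of the strong wave front lemma of Gorodnik--Oh--Shah, which the paper cites (\cite[Theorem~4.1]{GOS}) to obtain $S_T(\Omega)U_{\ell_0\e}\subset S_{(1+\e)T}(\Omega_{\e+})$ and its companion; your construction of the smooth majorant and minorant $\mathbf 1^{\pm}_{S_T(\Omega)}$ and the subsequent claim that $|F_T(e)-\la F_T,\omega_\e\ra|$ is absorbed by the regularized difference is exactly this lemma in disguise, but it is a genuine geometric input and needs to be invoked rather than asserted. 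Relatedly, the paper uses the hypothesis that $\Omega$ has finitely many components to guarantee $\nu_i(\partial(\Omega^{-1}(0)))=0$ (since $\nu_i$ is atom-free and the boundary is then a finite set), which is what makes the $\theta$-regularization cost genuinely $O(\e)$ (Proposition~6.2 of \cite{OS}); your phrasing in terms of Sobolev norms of the majorant being $T$-independent points at the same phenomenon but does not identify the real mechanism. With these two citations inserted, your argument is complete and coincides with the paper's.
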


 Identifying $\G$ with its pull back in $\PSL_2(\br)$, 
$\Xi_{v_0}(\G, \Omega)$ is given by \begin{equation*}\label{xid} \Xi_{v_0}(\G, \Omega):= \tfrac{\sqrt \pi \Gamma(\delta-\tfrac 12)}{\delta\cdot \Gamma(\delta)}
 \int_{(N\cap\G)\ba N} {(1+x^2)^{\delta}} {{d\nu_i(x)}}
\int_{k\in \Omega^{-1}} \frac{ d\nu_i(k(0))} {\|v_0k^{-1} \|^{\delta}} . \end{equation*}
As $\nu_i$ is supported on the limit set $\Lambda(\G)$, $\Xi_{v_0}(\G, \Omega)>0$ if and only if
the interior of $\Omega^{-1}(0)$ intersects $\Lambda(\G)$.

\begin{rmk}\rm  Theorem \ref{ec2m} is proved in \cite{OS} without an error term.
When the norm is {\it $K$-invariant} and $\Omega=K$, Theorem \ref{ec2m} was proved in \cite{KO1}. 
\end{rmk}

 Using the affine linear sieves developed by Bourgain, Gamburd and Sarnak \cite{BGS2}, this theorem has an application in studying almost prime
vectors in the orbit of $\Gamma$, lying in a fixed sector. To illustrate this, consider the quadratic form
 $Q(x_1, x_2, x_2):=x_1^2+x_2^2-x_3^2$ so that
the integral points in the cone $Q=0$ are Pythagorean triples.
 Let $F(x_1, x_2, x_3):=x_3$ be the hypotenuse. 

\begin{thm}
Suppose that the interior of $\Omega^{-1}(0)$ intersects $\Lambda(\G)$. Then
there exists $R>0$ (depending on ${\s}_0$)  such that
 \begin{multline*} \#\{(x_1,x_2, x_3) \in v_0\G \cap S_T(\Omega): \text{$x_3$ has at most $R$ prime factors} \}
 \asymp \frac{T^\delta}{\log T}\end{multline*} 
where $f(T)\asymp g(T)$ means that their ratio is between two positive constants uniformly for all $T\gg 1$.
\end{thm}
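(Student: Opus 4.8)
The plan is to feed the effective count of Theorem~\ref{ec2m} into the affine linear sieve of Bourgain--Gamburd--Sarnak \cite{BGS2}. Fix $Q(x_1,x_2,x_3)=x_1^2+x_2^2-x_3^2$, the primitive null vector $v_0$, and $F=x_3$; since $\SO_Q(\z)$ consists of integral matrices of determinant $1$ it preserves primitivity, so every point of $v_0\G$ is a primitive Pythagorean triple and hence $F(v)$ is odd with all of its prime factors $\equiv 1\bmod 4$. For $T>0$ I would introduce the finite multiset $\mathcal A=\mathcal A(T):=\{\,F(v):v\in v_0\G\cap S_T(\Omega)\,\}$, and for squarefree $d$ its subset $\mathcal A_d$ of elements divisible by $d$, so that $|\mathcal A_d|=\#\{v\in v_0\G\cap S_T(\Omega):d\mid F(v)\}$. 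By Zariski density of $\G$ (valid since $\delta>\tfrac12$) and strong approximation, the reduction of $v_0\G$ modulo $d$ exhausts the $\SO_Q(\z/d)$-orbit of $v_0$, which has size $\asymp d^{2}$; the condition $d\mid F(v)$ then singles out a union of cosets of $\G_d$ in $\G$, and applying Theorem~\ref{ec2m} to the group $\G_d$ coset by coset---each coset contributing, after a twist of the sector and of the norm, the same main term---gives
\begin{equation*}
 |\mathcal A_d|=g(d)\,X+r_d,\qquad X:=\Xi_{v_0}(\G,\Omega)\,T^{\delta},
\end{equation*}
where $g$ is multiplicative with $g(p)=0$ for $p\equiv 3\bmod 4$ and $g(p)\sim 2/p$ for $p\equiv 1\bmod 4$, and, using the uniformity of the spectral gap $\mathbf s_0$ over all of the $\G_d$, the remainder obeys $|r_d|\ll d^{\,O(1)}\,T^{\delta-4\mathbf s_0/55}$ uniformly in $d$. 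Under the hypothesis that the interior of $\Omega^{-1}(0)$ meets $\Lambda(\G)$ we have $\Xi_{v_0}(\G,\Omega)>0$, so $X\asymp T^{\delta}$.

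Next I would verify the two standing axioms of the combinatorial sieve. First, the local densities have sieve dimension $\kappa=1$, since the primes split evenly modulo $4$: $\sum_{p\le w}g(p)=\log\log w+O(1)$. Second---and this is the decisive point---the level of distribution: summing the remainder estimate,
\begin{equation*}
 \sum_{d\le D}\mu^{2}(d)\,|r_d|\ \ll\ D^{\,O(1)}\,T^{\delta-4\mathbf s_0/55},
\end{equation*}
so that taking $D=T^{\eta}$ for a small enough fixed multiple $\eta$ of $\mathbf s_0$ makes the left side $\ll T^{\delta-\eta}\ll X/(\log X)^{100}$; hence $D$, a fixed positive power of $X$, is an admissible level of distribution. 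This step is where the power saving in Theorem~\ref{ec2m} and the \emph{uniform} spectral gap of \cite{BGS} are both indispensable, and it is the main obstacle of the proof; everything downstream is routine sieve theory.

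Finally I would run the lower-bound $\beta$-sieve (Rosser--Iwaniec) of dimension $\kappa=1$. Taking $z:=D^{1/s}$ with a fixed $s>2$, so that the linear-sieve lower bound is positive, one gets
\begin{equation*}
 S(\mathcal A,z):=\#\Big\{v\in v_0\G\cap S_T(\Omega):\big(F(v),\textstyle\prod_{p<z}p\big)=1\Big\}\ \gg\ X\prod_{p<z}\big(1-g(p)\big)\ \gg\ \frac{X}{\log z}\ \asymp\ \frac{T^{\delta}}{\log T},
\end{equation*}
the Mertens estimate $\prod_{p<z}(1-g(p))\asymp(\log z)^{-1}$ again reflecting $\kappa=1$. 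Since $v\in S_T(\Omega)$ forces $F(v)\ll T$ while $z=T^{c}$ for a fixed $c=c(\mathbf s_0)>0$, every $v$ counted by $S(\mathcal A,z)$ has hypotenuse $x_3$ equal to a product of at most $R:=\lfloor 1/c\rfloor+1$ primes, which proves the lower bound with $R=R(\mathbf s_0)$. For the matching upper bound I would apply an upper-bound sieve (Selberg or Brun) to $S(\mathcal A,z)$ together with a dyadic decomposition in $\|v\|$ and the standard control of the fibres of $F$ over the orbit, exactly as in \cite{BGS2}, obtaining $\#\{v\in v_0\G\cap S_T(\Omega):x_3\text{ has}\le R\text{ prime factors}\}\ll T^{\delta}/\log T$; together with the lower bound this gives the asserted $\asymp T^{\delta}/\log T$.
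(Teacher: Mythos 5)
Your proposal follows exactly the route the paper invokes: the paper does not give a self-contained proof, but simply states that the theorem follows by plugging the effective sector count of Theorem \ref{ec2m} (with the uniform spectral gap of \cite{BGS} over the congruence subgroups $\G_d$) into the Bourgain--Gamburd--Sarnak affine linear sieve \cite{BGS2}, referring to \cite{KO1} where this is carried out for Euclidean norm balls. Your write-up is a correct fleshing out of that reference---setting up $\mathcal A_d$, establishing the level of distribution from the power-saving error term, identifying $\kappa=1$ from the split/inert dichotomy mod $4$, and running the $\beta$-sieve for the lower bound and a Selberg/Brun sieve for the upper---so it matches the paper's intended argument in approach and substance.
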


The constant $R$ can be computed explicitly
if $\delta$ is sufficiently large, using the work of Gamburd \cite{Gamburd}; for instance, $R=14$ if $\delta>0.9992$.
This has been carefully worked out in \cite{KO1} for Euclidean norm balls and the same analysis works
for sectors, using Theorem \ref{ec2m}.

\bigskip
\noindent{\bf Acknowledgment:} We thank Peter Sarnak for useful comments on the preliminary version of this paper.

\section{Base eigenfunctions of different $K$-types}
Let $G=\PSL_2( \R)$ and let
	$N=\{n_x:x\in \br \},\; A=\{a_y: y>0\}, \; K=\{k_\theta: \theta\in [0,\pi)\}$ with $n_x, a_y, k_\theta $ defined
as in the introduction.
Throughout the paper, let $\Gamma< G$ be a torsion-free discrete finitely generated subgroup with critical exponent $\tfrac{1}{2} <\delta<1$. 

Consider the Casimir operator
$\mathcal C$  given by
$$\mathcal C= -y^2\left(\frac{ \partial^2}{\partial x^2} +\frac{ \partial^2}{\partial y^2} \right) +y \frac{\partial ^2}{\partial x \partial \theta}.$$

By Lax-Phillips \cite{LP}, $L^2(\G\ba G)$ contains
 the unique irreducible infinite dimensional subrepresentation $V_\delta$ on which 
$\mathcal C$ acts by the scalar $\delta (1-\delta)$. 
Moreover $$V_\delta =\oplus_{\ell\in \z} \c \phi_\ell$$  where
 $\phi_\ell\in C^\infty(\G\ba G)\cap L^2(\G\ba G)$ is a unit vector (unique up to a unit) such that
$\phi_\ell(gk_\theta)=e^{2\ell i \theta} \phi_\ell(g)$ for all $g\in \G\ba G$ and $k_\theta\in K$ (cf. \cite{Bump}).

Let $\nu_i$ be a Patterson measure on $\partial(\bH^2)$
supported in $\Lambda(\G)$ with respect to $i\in \bH^2$ (\cite{Pa}, \cite{Su}).
Up to a scaling, $\nu_i$ is the weak-limit as $s\to \delta^+$
of the family of measures
$$\nu_{i}(s):=\frac{1}{\sum_{\gamma\in \G} e^{-sd(i, \gamma i)}}
\sum_{\gamma\in\G} e^{-sd(i, \gamma i)} \delta_{\gamma i}.$$

The $K$-invariant base eigenfunction $\phi_0\in L^2(\G\ba \bH^2)\cap C^\infty(\G\ba \bH^2)$ can
explicitly be given as the integral of the Poisson kernel against the Patterson measure \cite{Pa}:
\begin{equation}\label{vp1}
\phi_0(x+iy) = \int_{\br} \left(\frac{(u^2+1)y }{(x-u)^2 +y^2}\right)^\delta\; d\nu_i(u). \end{equation}
In the whole paper, we normalize $\nu_i$ so that $\|\phi_0\|_2=1$.

Let $\mathcal R$ and $\mathcal L$ be the raising and the lowering operators respectively given by
	$$\mathcal R = e^{2i\theta}\left(iy\tfrac{\partial}{\partial x} +y\tfrac{\partial}{\partial y} +\tfrac{1}{2i}\tfrac{\partial }{\partial \theta}\right)
\text{ and }\; \mathcal L = e^{-2i\theta}\left(-iy\tfrac{\partial }{\partial x}+y\tfrac{\partial }{\partial y} -\tfrac{1}{2i}\tfrac{\partial}{\partial\theta}\right).$$

For each $\ell \in \z_{\ge 0}$, set $$\psi_0^{(\ell)} := \mathcal R^{\ell}\phi_0,\quad\text{and}\quad
\psi_0^{(-\ell)} := \mathcal L^{\ell}\phi_0.$$

\begin{lem}\cite{BKS} \label{bo} For each $\ell\in \z_{\ge 0}$,
$$\|\psi_0^{(\pm \ell)}\|_2= \frac{\sqrt{\Gamma(\delta+\ell)\Gamma(1-\delta+\ell)}}{\sqrt{\Gamma (\delta) \Gamma(1-\delta)}}.
$$ where $\G(x)$ denotes the Gamma function for $x>0$.
\end{lem}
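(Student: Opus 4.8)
\textbf{Proof proposal for Lemma \ref{bo}.}

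The plan is to compute $\|\psi_0^{(\ell)}\|_2^2 = \langle \mathcal R^\ell \phi_0, \mathcal R^\ell \phi_0\rangle$ by induction on $\ell$, using the fact that, in the representation $V_\delta$, the raising operator $\mathcal R$ and the lowering operator $\mathcal L$ are (up to sign) formal adjoints of each other with respect to the $L^2$ inner product, and that their composition acts on each $K$-isotypic component $\c\psi_0^{(m)}$ by an explicit scalar determined by the Casimir eigenvalue $\delta(1-\delta)$. Concretely, first I would record the commutation relations of $\mathcal R$, $\mathcal L$ with the Casimir $\mathcal C$ and with the generator $W=\tfrac{\partial}{\partial\theta}$ of $K$: one has $\mathcal R$ raising the $K$-weight $2m$ to $2m+2$ and $\mathcal L$ lowering it, and the key identity is that $\mathcal{L}\mathcal{R}$ (and $\mathcal R\mathcal L$), when expressed through $\mathcal C$ and $W$, act on a vector of $K$-weight $2m$ by a scalar of the form $(\text{quadratic in } m) - \delta(1-\delta)$. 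Plugging in the value $\delta(1-\delta)$ for the Casimir and matching with the lowest-weight/base vector $\phi_0$ of weight $0$, this scalar factors as a product of two linear terms, namely $(\delta+m)(1-\delta+m)$ when raising from weight $2m$ to $2m+2$ (and the analogous $(\delta+m)(1-\delta+m)$ when lowering, by the symmetry $\ell\mapsto-\ell$ which corresponds to complex conjugation, cf. the remark that $\phi_{-\ell}c_{\phi_{-\ell}}=\overline{\phi_\ell c_{\phi_\ell}}$).

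Given this, I would run the induction: $\|\psi_0^{(\ell)}\|_2^2 = \langle \mathcal R^\ell\phi_0,\mathcal R^\ell\phi_0\rangle = \pm\langle \mathcal L\mathcal R\,\mathcal R^{\ell-1}\phi_0,\mathcal R^{\ell-1}\phi_0\rangle = (\delta+\ell-1)(1-\delta+\ell-1)\,\|\psi_0^{(\ell-1)}\|_2^2$, where the sign works out to $+$ because the two operators are negative adjoints but the relevant scalar also carries a compensating sign (one should check this carefully — see below). Telescoping from $\|\psi_0^{(0)}\|_2=\|\phi_0\|_2=1$ gives
$$\|\psi_0^{(\ell)}\|_2^2 = \prod_{j=0}^{\ell-1}(\delta+j)(1-\delta+j) = \frac{\Gamma(\delta+\ell)}{\Gamma(\delta)}\cdot\frac{\Gamma(1-\delta+\ell)}{\Gamma(1-\delta)},$$
using the functional equation $\Gamma(x+1)=x\Gamma(x)$, and taking square roots yields the claimed formula. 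The case $\psi_0^{(-\ell)}=\mathcal L^\ell\phi_0$ is identical with the roles of $\mathcal R$ and $\mathcal L$ swapped, and produces the same constant, consistent with the symmetry of the formula under $\ell\mapsto-\ell$.

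The main obstacle I anticipate is purely bookkeeping: getting the constants and — above all — the \emph{signs} right in the adjointness relation $\langle \mathcal R\psi,\eta\rangle = -\langle\psi,\mathcal L\eta\rangle$ and in the scalar by which $\mathcal L\mathcal R$ acts, so that the final product comes out positive (as it must, being a squared norm) and equal to $(\delta+\ell-1)(1-\delta+\ell-1)$ times the previous one rather than its negative. This is most safely done by working in the upper half-plane model with the explicit formulas for $\mathcal R$, $\mathcal L$, and $\mathcal C$ given above: a direct though slightly tedious computation with $\tfrac{\partial}{\partial x},\tfrac{\partial}{\partial y},\tfrac{\partial}{\partial\theta}$ shows $\mathcal L\mathcal R = -\mathcal C + \big(\tfrac{1}{2i}\tfrac{\partial}{\partial\theta}\big)^2 + \tfrac{1}{2i}\tfrac{\partial}{\partial\theta}$ up to a sign convention, which on a weight-$2m$ vector equals $-\delta(1-\delta) - m^2 - m = -(\delta+m)(1-\delta+m)\cdot(\text{sign})$; combined with $\mathcal L=-\mathcal R^*$ this gives the asserted recursion with the correct positive sign. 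Since the computation is the same kind already carried out in \cite{BKS}, I would present only the statement of the commutation identity, verify the scalar at weight $2m$, and then give the one-line induction above.
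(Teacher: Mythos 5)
Your proposal is correct, and it reproduces the standard Lie-theoretic argument. A point worth flagging first: the paper itself does not prove this lemma — it is cited directly from \cite{BKS}, so there is no internal proof to compare against. That said, your argument is precisely the one any such reference would give, and it is sound.

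The key steps are all present and accurate: the adjointness $\langle \mathcal R f, g\rangle = -\langle f, \mathcal L g\rangle$ (which follows from the real Lie algebra acting skew-adjointly on $L^2(\Gamma\backslash G)$ and $\mathcal R$, $\mathcal L$ being complexified combinations), the fact that $\mathcal L\mathcal R$ acts on a weight-$2m$ vector with Casimir eigenvalue $\delta(1-\delta)$ by the scalar $-\delta(1-\delta)-m^2-m = -(\delta+m)(1-\delta+m)$, the recursion $\|\psi_0^{(\ell)}\|_2^2 = (\delta+\ell-1)(1-\delta+\ell-1)\,\|\psi_0^{(\ell-1)}\|_2^2$, and the telescoping via $\Gamma(x+1)=x\Gamma(x)$. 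One small correction to the displayed operator identity: a direct computation with the formulas given for $\mathcal R$, $\mathcal L$, $\mathcal C$ gives
\begin{equation*}
\mathcal L\mathcal R = -\mathcal C - \Bigl(\tfrac{1}{2i}\tfrac{\partial}{\partial\theta}\Bigr)^{2} - \tfrac{1}{2i}\tfrac{\partial}{\partial\theta},
\end{equation*}
with a minus sign in front of both $W^2$ and $W$, not a plus as you wrote; you hedged this with ``up to a sign convention'' and in any case used the correct scalar $-\delta(1-\delta)-m^2-m$, so the error does not propagate. You should also note that positivity of the recursion factor $(\delta+\ell-1)(\ell-\delta)$ for $\ell\ge 1$ uses $\tfrac12<\delta<1$, which is the standing hypothesis. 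Since you have reduced everything to checking this one commutation identity, the cleanest finish would be simply to carry out that short computation rather than invoke it ``up to sign,'' which closes the one loose end in an otherwise complete and correct proof.
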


Since 
$\psi_0^{( \ell)}\in V_\delta$ and $\psi_0^{(\ell)}(gk_\theta)=e^{2\ell i \theta} \psi_0^{(\ell)}(g)$  by \cite[Sec. 2]{Bump},
 the unit vector $\phi_{\pm \ell}$ is now given as follows (up to a sign):
 $$\phi_{\pm \ell}=\frac{\psi_0^{(\pm \ell)}}{\|\psi_0^{ (\pm \ell)}\|_2 }.$$

Since $\phi_0>0$ and $\mathcal R$ is the complex conjugate of $\mathcal L$,
we have for each $\ell\in \z$,
$$
\phi_{- \ell}=\overline{\phi_{ \ell}}
.$$

\begin{thm}\label{l:rasing-lowering}
	For each  $\ell \in \z_{\ge 0}$, we have
$$\phi_{\pm \ell}(n_xa_yk_\theta) =e^{\pm 2\ell i\theta} \tfrac{\sqrt{\Gamma(\delta+\ell)\Gamma(1-\delta)}}{\sqrt{\Gamma (\delta) \Gamma(1-\delta+\ell)}}
\int_{\br} \left(\tfrac{(u^2+1)y}{(x-u)^2 +y^2}\right)^{\delta} \left(\tfrac{(x-u)\mp iy}{(x-u)\pm iy}\right)^{\ell}  d\nu_i(u).$$
\end{thm}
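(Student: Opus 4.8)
The plan is to compute $\psi_0^{(\ell)}=\mathcal R^\ell\phi_0$ explicitly by induction on $\ell\ge 0$ and then normalize using Lemma~\ref{bo}; the case $-\ell$ then follows by conjugation since $\phi_{-\ell}=\overline{\phi_\ell}$. First I would write the base eigenfunction as an integral over $\partial(\bH^2)$ of the family of ``deformed Poisson kernels''
\[
P_{\ell}(n_xa_yk_\theta;u):=e^{2\ell i\theta}\left(\frac{(x-u)-iy}{(x-u)+iy}\right)^{\ell}\left(\frac{(u^2+1)y}{(x-u)^2+y^2}\right)^{\delta},
\]
so that the claimed formula reads $\psi_0^{(\ell)}(g)=\int_{\br}P_\ell(g;u)\,d\nu_i(u)$ up to the constant $\tfrac{\sqrt{\Gamma(\delta+\ell)\Gamma(1-\delta+\ell)}}{\sqrt{\Gamma(\delta)\Gamma(1-\delta)}}$ absorbed later by normalization. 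The core of the argument is the pointwise identity $\mathcal R\,P_{\ell}(\,\cdot\,;u)=(\ell+\delta)\,P_{\ell+1}(\,\cdot\,;u)$ for each fixed $u$, which is a direct (if slightly tedious) computation: apply the first-order differential operator $\mathcal R=e^{2i\theta}(iy\partial_x+y\partial_y+\tfrac{1}{2i}\partial_\theta)$ to $P_\ell$, noting that the $\theta$-derivative produces the shift $e^{2(\ell+1)i\theta}$ with a factor $\ell$, and that $iy\partial_x+y\partial_y$ applied to the $u$-dependent factors produces the Möbius factor raised to $\ell+1$ with total coefficient $\ell+\delta$. Since $\mathcal R$ commutes with the integral against $d\nu_i(u)$ (differentiation under the integral sign is justified by dominated convergence; one checks the integrand and its derivatives are dominated uniformly on compacta, using that $\delta<1$ and $\nu_i$ has compact support in $\Lambda(\G)\subset\br$ once we work away from $\infty$, or more carefully by the growth estimates already implicit in $\phi_0\in C^\infty$), induction gives $\mathcal R^\ell\phi_0=\prod_{j=0}^{\ell-1}(j+\delta)\cdot\int_{\br}P_\ell(\,\cdot\,;u)\,d\nu_i(u)=\tfrac{\Gamma(\delta+\ell)}{\Gamma(\delta)}\int_{\br}P_\ell(\,\cdot\,;u)\,d\nu_i(u)$.

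Next I would divide by $\|\psi_0^{(\ell)}\|_2=\tfrac{\sqrt{\Gamma(\delta+\ell)\Gamma(1-\delta+\ell)}}{\sqrt{\Gamma(\delta)\Gamma(1-\delta)}}$ from Lemma~\ref{bo}, so that
\[
\phi_\ell=\frac{\psi_0^{(\ell)}}{\|\psi_0^{(\ell)}\|_2}
=\frac{\Gamma(\delta+\ell)/\Gamma(\delta)}{\sqrt{\Gamma(\delta+\ell)\Gamma(1-\delta+\ell)}/\sqrt{\Gamma(\delta)\Gamma(1-\delta)}}\int_{\br}P_\ell(\,\cdot\,;u)\,d\nu_i(u),
\]
and the prefactor simplifies to $\tfrac{\sqrt{\Gamma(\delta+\ell)\Gamma(1-\delta)}}{\sqrt{\Gamma(\delta)\Gamma(1-\delta+\ell)}}$, which is exactly the constant in the statement. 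This settles the $+\ell$ case. For $-\ell$, observe that $\overline{P_\ell(n_xa_yk_\theta;u)}=P_{-\ell}^{\,-}(n_xa_yk_\theta;u)$ where $P_{-\ell}^{\,-}$ is the kernel with $e^{-2\ell i\theta}$ and the reciprocal Möbius factor $\big(\tfrac{(x-u)+iy}{(x-u)-iy}\big)^\ell$; since $\nu_i$ is a real (positive) measure and the normalizing constant is real, taking complex conjugates in the $+\ell$ formula and invoking $\phi_{-\ell}=\overline{\phi_\ell}$ (established in the excerpt from $\phi_0>0$ and $\mathcal R=\overline{\mathcal L}$) yields precisely the $-\ell$ formula. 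Alternatively one runs the same induction with $\mathcal L$ and the identity $\mathcal L\,P_{-\ell}^{\,-}=(\ell+\delta)P_{-(\ell+1)}^{\,-}$.

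The main obstacle is the bookkeeping in the key commutation identity $\mathcal R\,P_\ell=(\ell+\delta)P_{\ell+1}$: one must carefully track how $\mathcal R$, written in the $(x,y,\theta)$ coordinates, acts on a product of the Poisson kernel to the power $\delta$ and a Möbius factor to the power $\ell$, and verify that the $x$- and $y$-derivatives conspire (via the combination $iy\partial_x+y\partial_y$, which is the Cauchy–Riemann-type operator adapted to the holomorphic coordinate $(x-u)+iy$) to reproduce exactly the next kernel with the clean coefficient $\ell+\delta$ rather than something $u$-dependent. A clean way to organize this is to first verify the two building-block identities $(iy\partial_x+y\partial_y)\big[\tfrac{(u^2+1)y}{(x-u)^2+y^2}\big]^\delta=\delta\,\tfrac{(x-u)-iy}{(x-u)+iy}\big[\tfrac{(u^2+1)y}{(x-u)^2+y^2}\big]^\delta$ and $(iy\partial_x+y\partial_y)\big[\tfrac{(x-u)-iy}{(x-u)+iy}\big]=0$ (i.e. the Möbius factor is annihilated, being holomorphic in the appropriate sense), after which the product rule together with the $\tfrac{1}{2i}\partial_\theta$ term — contributing $\ell$ from the $e^{2\ell i\theta}$ factor — gives the result immediately. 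With those lemmas in hand the induction and the conjugation step are routine.
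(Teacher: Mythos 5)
Your overall strategy is the same as the paper's: compute $\psi_0^{(\ell)}=\mathcal R^\ell\phi_0$ by induction, differentiating the kernel pointwise in $u$, then normalize via Lemma~\ref{bo}, and obtain the $-\ell$ case by conjugation. However, there is a concrete error in the ``clean'' reorganization at the end, and if one actually used the two building-block identities as you state them the induction would \emph{not} close.

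The problem is the claimed identity
\[
(iy\partial_x+y\partial_y)\Bigl[\tfrac{(x-u)-iy}{(x-u)+iy}\Bigr]=0.
\]
This is false. Writing $M=\tfrac{(x-u)-iy}{(x-u)+iy}$ and $z=(x-u)+iy$, one has $iy\partial_x+y\partial_y=2iy\,\partial_z$, and $M=\bar z/z$ is not holomorphic in $z$; a direct computation gives
\[
(iy\partial_x+y\partial_y)M=\frac{-2iy}{(x-u)+iy}\,M=(M-1)M,
\]
which is nonzero. This nonzero term is not a blemish to be argued away --- it is essential. Applying the product rule to $f_u^{\delta}M^{\ell}$ one gets
\[
(iy\partial_x+y\partial_y)\bigl(f_u^{\delta}M^{\ell}\bigr)=\delta\,f_u^{\delta}M^{\ell+1}+\ell\,f_u^{\delta}M^{\ell}\cdot\frac{-2iy}{(x-u)+iy},
\]
and it is precisely the second term that combines with the $\tfrac{1}{2i}\partial_\theta$ contribution $\ell\,f_u^{\delta}M^{\ell}$ (using $\tfrac{-2iy}{(x-u)+iy}+1=M$) to produce the single clean term $(\ell+\delta)f_u^{\delta}M^{\ell+1}$. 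If your lemma were true, the two pieces $\delta f_u^{\delta}M^{\ell+1}$ and $\ell f_u^{\delta}M^{\ell}$ would not combine into a multiple of $P_{\ell+1}$, and the inductive hypothesis would fail at the next step. The paper's proof carries out exactly this two-term computation and the subsequent cancellation; replacing the correct identity for $(iy\partial_x+y\partial_y)M$ (or, equivalently, tracking both terms in the product rule and then adding the $\theta$-derivative) fixes your argument. The rest --- commuting $\mathcal R$ past the integral, normalizing by $\|\psi_0^{(\ell)}\|_2$, and passing to $-\ell$ by conjugation using $\phi_{-\ell}=\overline{\phi_\ell}$ --- is correct and matches the paper.
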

\begin{proof} 
By Lemma \ref{bo}, it suffices to show that
$$\psi_0^{(\pm \ell)}(n_xa_yk_\theta) =\tfrac{ e^{\pm 2\ell i\theta}\Gamma(\delta+\ell)}{\Gamma(\delta)}\int_{\br} \left(\tfrac{(u^2+1)y}{(x-u)^2 +y^2}\right)^{\delta} \left(\tfrac{(x-u)\mp iy}{(x-u)\pm iy}\right)^{\ell} \; d\nu_i(u).$$

Fix $u\in \R$. For $x\in\R$ and $y>0$, let
	$$f_u(x, y) := \frac{(u^2+1)y}{(x-u)^2+y^2}\;.$$

Then
	\begin{align*}
	\left(\mathcal R\phi_0\right)(n_xa_yk_\theta) =  \delta\cdot e^{2i\theta} \int_{\br} f_u(x,y)^\delta \cdot\frac{(x-u)-iy}{(x-u)+iy}\; d\nu_i(u)
	.\end{align*}
	
To use an induction, we assume that  
	\begin{align*}
	\left(\mathcal R^{\ell} \phi_0\right)(n_xa_yk_\theta) =  \tfrac{\Gamma(\delta+\ell)}{\Gamma(\delta)}\cdot e^{2i\ell\theta}\int_{\br} \left(\tfrac{(u^2+1)y}{(x-u)^2+y^2}\right)^\delta \left(\tfrac{(x-u)-iy}{(x-u)+iy}\right)^{\ell} \; d\nu_i(u)\;.
	\end{align*}

We compute
	\begin{multline*}
	\left(iy\frac{\partial}{\partial x} + y\frac{\partial}{\partial y}\right)\left(f_u^\delta(x, y)\cdot \left(\tfrac{(x-u)-iy}{(x-u)+iy}\right)^\ell\right)\\ =  
\delta\cdot f_u^\delta(x, y)\cdot \left(\tfrac{(x-u)-iy}{(x-u)+iy}\right)^{\ell+1}  + \ell\cdot f_u^\delta(x, y)\cdot \left(\tfrac{(x-u)-iy}{(x-u)+iy}\right)^{\ell}\cdot\left(\tfrac{-2iy}{(x-u)+iy}\right)
	\end{multline*}

and 
	\begin{multline*}
	\frac{1}{2i}\frac{\partial}{\partial\theta}\left(\mathcal R^\ell\phi_0\right)(n_xa_yk_\theta) \\
	= \tfrac{\Gamma(\delta+\ell)}{\Gamma(\delta)} \cdot \ell\cdot e^{2i\ell\theta}\int_\R\left(\tfrac{(u^2+1)y}{(x-u)^2+y^2}\right)^\delta\left(\tfrac{(x-u)-iy}{(x-u)+iy}\right)^\ell\; d\nu_i(u).
	\end{multline*}
Hence 
	\begin{align*}
	&\left(\mathcal R^{\ell+1}\phi_0\right)(n_xa_yk_\theta) = \left(\mathcal R\left(\mathcal R^{\ell} \phi_0\right)\right)(n_xa_yk_\theta)
	\\\\&= \tfrac{\Gamma(\delta+\ell+1)}{\Gamma(\delta)}\cdot e^{2i(\ell+1)\theta}\int_{\br}
f_u^\delta(x, y)  \left(\tfrac{(x-u)-iy}{(x-u)+iy}\right)^{\ell +1} \; d\nu_i(u)\;.
	\end{align*}
	The claim about the lowering operator follows from similar computations as above.
\end{proof}

\begin{cor} \label{ul}\label{ub}
For any $\ell\in \Z_{\ge 0}$, $x\in\br$ and $y>0$,
 $$\left|\phi_{\pm \ell}(n_xa_y)\right|  \ll \phi_0(n_xa_y)$$
with implied constant independent of $\ell$.
\end{cor}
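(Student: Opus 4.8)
The plan is to read the bound directly off the explicit formula in Theorem~\ref{l:rasing-lowering}. Put $\theta=0$ there: for every $\ell\in\z_{\ge 0}$,
$$\phi_{\pm\ell}(n_xa_y)=c_\ell\int_{\br}\left(\frac{(u^2+1)y}{(x-u)^2+y^2}\right)^{\delta}\left(\frac{(x-u)\mp iy}{(x-u)\pm iy}\right)^{\ell}\,d\nu_i(u),\qquad c_\ell:=\sqrt{\frac{\Gamma(\delta+\ell)\,\Gamma(1-\delta)}{\Gamma(\delta)\,\Gamma(1-\delta+\ell)}}.$$
The key elementary observation is that for real $u$ the numerator and denominator of $\dfrac{(x-u)\mp iy}{(x-u)\pm iy}$ are complex conjugates of one another, so this factor, and hence its $\ell$-th power, has modulus $1$. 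Applying the triangle inequality inside the integral and comparing with (\ref{vp1}) gives immediately
$$\bigl|\phi_{\pm\ell}(n_xa_y)\bigr|\ \le\ c_\ell\int_{\br}\left(\frac{(u^2+1)y}{(x-u)^2+y^2}\right)^{\delta}d\nu_i(u)\ =\ c_\ell\,\phi_0(n_xa_y).$$
That is essentially the whole computation; the only point requiring care is the uniformity of the constant.

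The constant $c_\ell$ is in fact unbounded: $c_\ell^2=\prod_{j=0}^{\ell-1}\frac{\delta+j}{1-\delta+j}$, and since $\delta>\tfrac12$ every factor exceeds $1$, so $c_\ell$ increases with $\ell$ (with $c_\ell\asymp\ell^{\delta-1/2}$). To replace $c_\ell$ by a constant depending only on $\G$ one must recover the reciprocal factor from cancellation in the oscillatory integral. I would do this by changing the base point of the Patterson density: with $z=x+iy$, the conformal transformation rule for the $\G$-invariant conformal density of dimension $\delta$ gives $\left(\frac{(u^2+1)y}{(x-u)^2+y^2}\right)^{\delta}d\nu_i(u)=d\nu_z(u)$, a finite measure of total mass $\|\nu_z\|=\phi_0(n_xa_y)$, and the Möbius map $u\mapsto\frac{(x-u)-iy}{(x-u)+iy}$ carries $\nu_z$ to a measure $\mu_z$ on the unit circle with the same mass. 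Then $\phi_{\pm\ell}(n_xa_y)=c_\ell\int w^{\pm\ell}\,d\mu_z(w)$, so Corollary~\ref{ul} with implied constant independent of $\ell$ is equivalent to the uniform Fourier-decay estimate
$$\Bigl|\int w^{\ell}\,d\mu_z(w)\Bigr|\ \ll\ |\ell|^{\,1/2-\delta}\,\|\mu_z\|\qquad\text{for all }\ell\in\z,\ z\in\bH^2.$$
(I note that if instead one carries along the weight $c_{\phi_{\pm\ell}}$, which up to a unit equals $c_{\phi_0}$ times the reciprocal factor $c_\ell^{-1}$, then the $c_\ell$'s cancel and the clean bound $\bigl|c_{\phi_{\pm\ell}}\phi_{\pm\ell}(n_xa_y)\bigr|\le|c_{\phi_0}|\,\phi_0(n_xa_y)$ follows at once from the modulus-$1$ remark alone.)

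The main obstacle is the displayed Fourier-decay bound. Each $\mu_z$ is a locally bi-Lipschitz image of the Patterson--Sullivan measure of $\G$, hence Ahlfors $\delta$-regular with regularity constant comparable to its total mass, uniformly for $z$ in the convex core and with the standard corrections near parabolic fixed points afforded by geometric finiteness. A soft argument — convolving $\mu_z$ with a bump of width $L^{-1}$ and invoking $\delta$-regularity — already yields $\sum_{|\ell|\le L}\bigl|\int w^\ell d\mu_z\bigr|^2\ll L^{1-\delta}\|\mu_z\|^2$, i.e.\ decay of the required order on average over $\ell$; the real work is to upgrade this to every individual frequency and to make it uniform over the base point $z$. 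This I would extract from the (polynomial) Fourier decay of Patterson--Sullivan measures of non-elementary Fuchsian groups — via the spectral gap of the transfer operator attached to the boundary dynamics, or via the $\G$-self-similarity of $\nu_i$ combined with a non-concentration/sum--product input — and propagate it to all $\mu_z$ using the equivariance $\gamma_*\nu_z=\nu_{\gamma z}$ of the conformal density together with a compactness argument on a fundamental domain for the core; the delicate point is keeping all constants uniform as $z$ runs into a cusp or a funnel, where $\phi_0(n_xa_y)$ and $\|\mu_z\|$ both tend to $0$.
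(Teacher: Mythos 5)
Your modulus-one calculation is exactly the paper's proof, and your observation that it does \emph{not} give the stated uniformity in $\ell$ is correct. Reading $\theta=0$ into Theorem~\ref{l:rasing-lowering} and applying the triangle inequality gives only $|\phi_{\pm\ell}(n_xa_y)|\le C_\ell\,\phi_0(n_xa_y)$ with $C_\ell=\sqrt{\Gamma(\delta+\ell)\Gamma(1-\delta)/(\Gamma(\delta)\Gamma(1-\delta+\ell))}$, and, as you compute, $C_\ell^2=\prod_{j=0}^{\ell-1}\tfrac{\delta+j}{1-\delta+j}\asymp\ell^{2\delta-1}\to\infty$ precisely because $\delta>\tfrac12$. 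So the paper's one-line proof establishes $|\phi_{\pm\ell}(n_xa_y)|\ll(\ell+1)^{\delta-1/2}\phi_0(n_xa_y)$, not the $\ell$-uniform statement claimed in the corollary. You have correctly located a real gap; your closing parenthetical, that it is the product $c_{\phi_{\pm\ell}}\phi_{\pm\ell}$ whose modulus is uniformly dominated by $|c_{\phi_0}|\,\phi_0$ because the $\Gamma$-ratios cancel, is the accurate version of what the modulus-one argument does prove.

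Where you overreach is in the proposed repair. Your reduction of the uniform constant to pointwise decay $|\widehat{\mu_z}(\ell)|\ll\ell^{1/2-\delta}\|\mu_z\|$ for the pushed-forward Patterson measures is a genuinely deep Fourier-decay input that the paper nowhere relies on; you do not actually carry out the transfer-operator or sum--product argument you gesture at, and the required uniformity over all base points $z$ (including near cusps and funnels, where $\|\mu_z\|\to 0$) is especially delicate, so this is a sketch of a different and much harder theorem, not a proof. It is also more than what is needed: a polynomial loss in $\ell$ is harmless downstream, since every place Corollary~\ref{ul} feeds in (Lemma~\ref{es}, Proposition~\ref{l:approx_matrix_coeff}, the proof of Theorem~\ref{m1}) the bound is ultimately summed against coefficients $\la\psi,\phi_\ell\ra$ which decay faster than any polynomial in $\ell$ by Lemma~\ref{haha}. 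The economical fix is therefore to state the corollary with the $(\ell+1)^{\delta-1/2}$ factor your computation actually yields, propagate that power through Lemma~\ref{es} and Proposition~\ref{l:approx_matrix_coeff}, and absorb it in Theorem~\ref{m1} via the rapid decay of the $K$-type Fourier coefficients (if necessary at the cost of a higher Sobolev norm).
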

\begin{proof} Since $\left|\frac{(x-u)-iy}{(x-u)+iy}\right| = \left|\frac{(x-u)+iy}{(x-u)-iy}\right|=1$,
 the claim follows from Theorem \ref{l:rasing-lowering}.
\end{proof}

\section{The average of $\phi_{\ell}$ over a closed horocycle}

We suppose that  
$\G\ba \G N$ is closed in $X$ and
 define \begin{equation}\label{xo}x_0=\begin{cases} \tfrac{1}{2}\min\{x>0: n_{x}\in N\cap \Gamma\}& \text{if $N\cap \Gamma\ne \{e\}$}\\
\infty&\text{ otherwise}\end{cases}\end{equation}
so that $2x_0$ is the period of the $N$-orbit $\G\ba \G N$.

For a function $\psi$ on $\G\ba G$ and $g\in G$, we define
	$$\psi^{N} (g) := \int_{n_x\in (N\cap \Gamma) \bsl N} \psi(n_x g)\;dx
=\int_{-x_0}^{x_0} \psi(n_x g) dx .$$

As
$\G\ba \Gamma N$ is a (stable) closed horocycle based at $\infty$, we have
either $\infty\notin \Lambda(\G)$ ($x_0=\infty$), or $\infty$ is a parabolic fixed point ($x_0<\infty$)
 \cite{Dalbo}. Since $\Lambda(\G)$ is a closed subset of boundary $\hat \br$,  $\infty\notin \Lambda(\G)$ implies that $\Lambda(\G)$ is
 a bounded subset of $\br$.

\begin{prop}\label{co} There exists $c_0>0$ such that
 $$\phi_0^N (a_y) = c_0\cdot  y^{1-\delta}.$$
\end{prop}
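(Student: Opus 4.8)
The plan is to compute the horocycle average $\phi_0^N(a_y) = \int_{-x_0}^{x_0} \phi_0(n_x a_y)\, dx$ directly from Patterson's explicit formula \eqref{vp1} and exhibit the constant $c_0$. Substituting the integral expression for $\phi_0$, we get
$$
\phi_0^N(a_y) = \int_{-x_0}^{x_0} \int_{\br} \left(\frac{(u^2+1)y}{(x-u)^2+y^2}\right)^{\delta} d\nu_i(u)\, dx.
$$
Since the integrand is nonnegative and $\nu_i$ is a finite measure, I would first justify swapping the order of integration by Tonelli's theorem, obtaining
$$
\phi_0^N(a_y) = \int_{\br} (u^2+1)^{\delta}\, y^{\delta}\left(\int_{-x_0}^{x_0} \frac{dx}{((x-u)^2+y^2)^{\delta}}\right) d\nu_i(u).
$$

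The next step is to analyze the inner integral. When $x_0 = \infty$ (the case $N\cap\Gamma = \{e\}$, i.e., $\infty\notin\Lambda(\Gamma)$), the substitution $x = u + yt$ gives
$$
\int_{\br} \frac{dx}{((x-u)^2+y^2)^{\delta}} = y^{1-2\delta}\int_{\br} \frac{dt}{(t^2+1)^{\delta}} = y^{1-2\delta}\cdot B\!\left(\tfrac12, \delta - \tfrac12\right) = y^{1-2\delta}\cdot\frac{\sqrt\pi\,\Gamma(\delta-\tfrac12)}{\Gamma(\delta)},
$$
which converges precisely because $\delta > 1/2$; this is exactly where the hypothesis is used. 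Plugging back in yields $\phi_0^N(a_y) = y^{1-\delta}\cdot\frac{\sqrt\pi\,\Gamma(\delta-\tfrac12)}{\Gamma(\delta)}\int_{\br}(u^2+1)^{\delta}\,d\nu_i(u)$, so $c_0 = \frac{\sqrt\pi\,\Gamma(\delta-\tfrac12)}{\Gamma(\delta)}\int_{\br}(u^2+1)^{\delta}\,d\nu_i(u)$, which is positive and finite since $\Lambda(\Gamma)$ is a bounded subset of $\br$ in this case and $\nu_i$ is finite.

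When $x_0 < \infty$ ($\infty$ a parabolic fixed point), the same substitution does not directly produce a clean power of $y$, since the domain of $t$-integration becomes $[(-x_0-u)/y, (x_0-u)/y]$. The right move here is to use the $\Gamma$-invariance: $\phi_0$ descends to $\Gamma\ba G$, so $\phi_0^N(a_y)$ can equally be computed by integrating $\phi_0(n_x a_y)$ over any fundamental domain for $(N\cap\Gamma)\ba N$ acting on $N$, and then unfolding against the full real line using the $\nu_i$-equivariance of the Patterson measure under the parabolic subgroup $N\cap\Gamma$ (the conformal density transformation law $d\gamma_*\nu_i(u) = |\gamma'(u)|^{\delta}\,d\nu_i(u)$). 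Concretely, one writes $\int_{\br}(u^2+1)^\delta (\cdots)\, d\nu_i(u)$ and folds the $x$-integral over $[-x_0,x_0]$ together with the $\Z$-action on $\nu_i$ to recover an integral of $y^{1-2\delta}(t^2+1)^{-\delta}$ over all of $\br$; alternatively, and perhaps more cleanly, one observes that $\phi_0$ being a $\Delta$-eigenfunction with eigenvalue $\delta(1-\delta)$ and $N$-invariant after averaging forces $\phi_0^N(a_y)$ to satisfy the ODE $(\Delta_{a_y})(\phi_0^N) = \delta(1-\delta)\phi_0^N$ on the $A$-orbit, whose solutions are spanned by $y^\delta$ and $y^{1-\delta}$; square-integrability (or the known asymptotics near the cusp from Patterson's work) kills the $y^\delta$ term and leaves $\phi_0^N(a_y) = c_0 y^{1-\delta}$, with $c_0 > 0$ because $\phi_0 > 0$. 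I expect the main obstacle to be the parabolic case: making the unfolding against $\nu_i$ rigorous requires care with the transformation law of the Patterson measure under $N\cap\Gamma$ and with the convergence of the resulting integral, and one has to check that the coefficient of $y^\delta$ genuinely vanishes rather than merely being small. For this I would lean on the Eisenstein-series-type expansion of $\phi_0$ near the cusp established by Patterson, or on the $L^2$-membership of $\phi_0$, either of which rules out the growing solution.
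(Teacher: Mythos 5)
Your proposal is correct and takes essentially the same route as the paper: for $x_0=\infty$ the Tonelli computation and resulting value of $c_0$ are identical, and for the parabolic case your second alternative (the second-order ODE on the $A$-orbit with $L^2$-membership killing the $y^\delta$ solution) is precisely the paper's argument. The only step you leave implicit is the concrete mechanism ruling out the $y^\delta$ term: the paper applies Cauchy--Schwarz over a cusp region $(-x_0,x_0)\times[Y_0,\infty)$ injecting into a fundamental domain, giving $\|\phi_0\|_2^2\gg d_0^2\int_{Y_0}^\infty y^{2\delta-2}\,dy$, which diverges for $\delta>\tfrac12$ unless $d_0=0$ (having first observed $c_0,d_0\ge 0$ from $\phi_0>0$).
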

\begin{proof}
  When $N\cap \G$ is trivial, and 
hence $\Lambda(\G)$ is a bounded subset of $\br$,
we show by a direct computation (see \cite{K1}):
\begin{align*}\phi_0^N (a_y) &=\int_{u\in \Lambda(\G)} (u^2+1)^\delta d\nu_i(u)\cdot \int_{x\in \br}\left(\frac{y}{x^2+y^2}\right)^\delta dx
\\&= \omega_0 y^{1-\delta} \int_{t\in \br}\left(\frac{1}{1+t^2}\right)^\delta dt
\\ &=\omega_0  \frac{\sqrt{\pi} \Gamma(\delta-\tfrac 12)}{\Gamma(\delta)} y^{1-\delta}
\end{align*}
where $\omega_0=\int_{u\in \Lambda(\G)} (u^2+1)^\delta d\nu_i(u)$.

Now suppose $N\cap \G$ is non-trivial and hence $0<x_0<\infty$. Since $\phi_0^N(a_y)$ must satisfy the differential equation
$y^2\frac{\partial^2}{\partial y^2}\phi_0^N (a_y) =\delta(1-\delta)\phi_0^N(a_y) $,
there exist constants $c_0, d_0\in \br$ such that for all $y>0$
	$$\phi_0^N (a_y) = c_0 y^{1-\delta}+ d_0 y^{\delta}.$$
Since $\phi_0 >0$ and the above holds for all $y>0$, it follows that $c_0, d_0\ge 0$. 
We claim that $d_0=0$.

 Since $\G$ is finitely generated,
it follows that $\Gamma$ admits a fundamental domain $\mathcal F$ in $\mathbb H^2$ such that
$(-x_0, x_0) \times [Y_0,\infty)$ injects to $\mathcal F$ for some $Y_0\gg 1$.

Then \begin{align*} \|\phi_0\|^2 &\ge \int_{Y_0}^{\infty}\int_{-x_0}^{x_0}\phi_0(x+iy)^2  y^{-2} dx dy
\\  &\ge  \tfrac{1}{2x_0} \int_{Y_0}^{\infty} \left( \int_{-x_0}^{x_0}\phi_0(x+iy)dx\right)^2  y^{-2}  dy\\
&=   \tfrac{1}{2x_0} \int_{Y_0}^{\infty} \left(c_0y^{1-\delta} +d_0 y^{\delta} \right)^2  y^{-2}  dy\\ &\ge
\tfrac{d_0^2}{2x_0} \int_{Y_0}^{\infty}   y^{2\delta-2}  dy .
\end{align*}
Since $\delta>\tfrac 12$, $\|\phi_0\|=\infty$ unless $d_0\ne 0$.  Therefore $d_0=0$. Since $\phi_0>0$,
clearly $c_0>0$.
\end{proof}

\begin{lem}\label{zero} Let $\ell \in \z_{\ge 0}$. For any fixed $y>0$ and $0\le \theta<\pi$,
	$$\int_{(N\cap \Gamma)\bsl \Gamma} \frac{\partial}{\partial x} \phi_{\pm \ell}(n_x a_yk_\theta)\; dx = 0.$$
\end{lem}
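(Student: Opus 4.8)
The plan is to reduce the claimed vanishing to the fundamental theorem of calculus applied to the smooth function $x\mapsto\phi_{\pm\ell}(n_xa_yk_\theta)$ on $\br$, exploiting the left $\Gamma$-invariance of $\phi_{\pm\ell}\in C^\infty(\Gamma\bsl G)$. I would split into the two cases according to whether $N\cap\Gamma$ is trivial, since these determine the domain of the $x$-integration: a fundamental domain for $N\cap\Gamma$ acting on $N$, which is $[-x_0,x_0]$ when $x_0<\infty$ and all of $\br$ when $x_0=\infty$ (as in the definition of $\psi^N$).

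First, suppose $N\cap\Gamma\ne\{e\}$, so $0<x_0<\infty$. Since $n_{2x_0}\in N\cap\Gamma\subset\Gamma$ and $n_{x_0}=n_{2x_0}n_{-x_0}$, left $\Gamma$-invariance of $\phi_{\pm\ell}$ gives $\phi_{\pm\ell}(n_{x_0}a_yk_\theta)=\phi_{\pm\ell}(n_{-x_0}a_yk_\theta)$ (equivalently, $x\mapsto\phi_{\pm\ell}(n_xa_yk_\theta)$ is periodic of period $2x_0$). Hence the fundamental theorem of calculus yields
$$\int_{-x_0}^{x_0}\frac{\partial}{\partial x}\phi_{\pm\ell}(n_xa_yk_\theta)\,dx=\phi_{\pm\ell}(n_{x_0}a_yk_\theta)-\phi_{\pm\ell}(n_{-x_0}a_yk_\theta)=0.$$

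Next, suppose $N\cap\Gamma=\{e\}$, so that $\Lambda(\Gamma)$ is a bounded subset of $\br$, say $\Lambda(\Gamma)\subset[-M,M]$, and the integration is over all of $\br$. Here I would use the explicit formula of Theorem \ref{l:rasing-lowering}: since $\nu_i$ is supported on $\Lambda(\Gamma)$ and $\left|\tfrac{(x-u)\mp iy}{(x-u)\pm iy}\right|=1$, for $|x|>2M$ one has $(x-u)^2+y^2\gg x^2$ uniformly in $u\in\Lambda(\Gamma)$, whence $|\phi_{\pm\ell}(n_xa_yk_\theta)|\ll_{y,\delta,\ell}|x|^{-2\delta}$; differentiating under the integral sign similarly gives $\left|\frac{\partial}{\partial x}\phi_{\pm\ell}(n_xa_yk_\theta)\right|\ll_{y,\delta,\ell}|x|^{-2\delta-1}$. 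Since $2\delta>1$, both $x\mapsto\phi_{\pm\ell}(n_xa_yk_\theta)$ and its $x$-derivative are integrable on $\br$; in particular $\phi_{\pm\ell}(n_xa_yk_\theta)\to0$ as $|x|\to\infty$, and therefore
$$\int_{\br}\frac{\partial}{\partial x}\phi_{\pm\ell}(n_xa_yk_\theta)\,dx=\lim_{R\to\infty}\big(\phi_{\pm\ell}(n_Ra_yk_\theta)-\phi_{\pm\ell}(n_{-R}a_yk_\theta)\big)=0.$$

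There is essentially no obstacle in the argument; the only point that will require a line of justification is the decay and integrability along the horocycle in the second case, for which the explicit integral representation of Theorem \ref{l:rasing-lowering} together with the boundedness of the support of $\nu_i$ suffices. Alternatively, one could quote Corollary \ref{ub}, noting $|\phi_{\pm\ell}(n_xa_yk_\theta)|=|\phi_{\pm\ell}(n_xa_y)|\ll\phi_0(n_xa_y)$, for the pointwise decay, but the derivative estimate is cleanest from the explicit formula.
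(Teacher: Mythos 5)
Your proposal matches the paper's proof essentially line for line: the $x_0<\infty$ case is handled identically via periodicity and the fundamental theorem of calculus, and the $x_0=\infty$ case rests on the same decay of $\phi_{\pm\ell}$ along the horocycle at infinity, which you obtain either from the explicit integral representation or (as the paper does) by invoking Corollary \ref{ub} to reduce to the decay of $\phi_0$. The only addition is your derivative bound $\ll|x|^{-2\delta-1}$ to establish absolute integrability of $\partial_x\phi_{\pm\ell}$; the paper skips this and treats the integral as $\lim_{t\to\infty}\int_{-t}^{t}$, which the fundamental theorem handles directly once the endpoint values vanish. This is a harmless refinement rather than a different route.
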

\begin{proof}
 If $x_0<\infty$, then $$\int_{-x_0}^{x_0} \frac{\partial}{\partial x} \phi_{\pm \ell}(n_x a_yk_\theta)\; dx =
 \phi_{\pm \ell}(n_{x_0} a_yk_\theta) -\phi_{\pm \ell}(n_{-x_0} a_yk_\theta) .$$
Since  $\G n_{x_0}=\G n_{-x_0}$, the claim follows.

Suppose $x_0=\infty$. Since
 $$\phi_0(x+iy) = \int_{u\in \Lambda(\G)} \left(\frac{(u^2+1)y }{(x-u)^2 +y^2}\right)^\delta\; d\nu_i(u) $$
and $\Lambda(\G)$ is bounded, we have
$\phi_0(n_xa_y)\to 0$ as $|x|\to \infty$.
On the other hand,
\begin{align*} & \int_{-\infty}^{\infty} \frac{\partial}{\partial x} \phi_{\pm \ell}(n_x a_yk_\theta)\; dx \\ &=
\lim_{t\to \infty} \int_{-t}^t  \frac{\partial}{\partial x} \phi_{\pm \ell}(n_x a_yk_\theta)\; dx
\\ &= \lim_{t\to \infty} (  \phi_{\pm \ell}(n_{t} a_yk_\theta)  -
 \phi_{\pm \ell}(n_{-t} a_yk_\theta)).
\end{align*}
Since $|\phi_{\pm \ell}(n_{t} a_yk_\theta)|\ll \phi_0(n_{t} a_y)$ by Corollary \ref{ul}
and $\phi_0(n_ta_y)\to 0$ as $|t|\to \infty$,
the claim follows.
\end{proof}

\begin{thm}\label{l:abs_conv}
	For any $\ell \in \z_{\ge 0}$, 
	$$\int_{(N\cap \Gamma)\bsl N}\phi_{\pm \ell}(n_xa_y)\; dx = 
c_0 \frac{\sqrt{\Gamma(\delta)\Gamma(\ell +1-\delta)}}{\sqrt{\Gamma(1-\delta)
\Gamma(\delta+\ell)}} \;y^{1-\delta}.$$
In particular, for each $y>0$, 
$$\int_{(N\cap \Gamma)\bsl N}\phi_{\pm \ell}(n_xa_y)\; dx =O( y^{1-\delta})$$
with the implied constant independent of $\ell$.
\end{thm}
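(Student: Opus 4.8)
The plan is to compute $\int_{(N\cap\Gamma)\ba N}\phi_{\pm\ell}(n_xa_y)\,dx$ by reducing to the known case $\ell=0$ (Proposition \ref{co}) via the structure of $V_\delta$ as an irreducible representation. The key point is that $\phi_{\pm\ell}=\psi_0^{(\pm\ell)}/\|\psi_0^{(\pm\ell)}\|_2=\mathcal R^\ell\phi_0/\|\mathcal R^\ell\phi_0\|_2$ (resp.\ with $\mathcal L$), so it suffices to understand what $\mathcal R$ and $\mathcal L$ do to the horocycle average, and then invoke Lemma \ref{bo} for the normalizing constants.

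First I would establish a differential identity. Writing $h(y,\theta):=\int_{(N\cap\Gamma)\ba N}\psi_0^{(\ell)}(n_xa_yk_\theta)\,dx$, I note that $\mathcal R=e^{2i\theta}\bigl(iy\tfrac{\partial}{\partial x}+y\tfrac{\partial}{\partial y}+\tfrac1{2i}\tfrac{\partial}{\partial\theta}\bigr)$, and upon integrating over the closed horocycle the term $iy\tfrac{\partial}{\partial x}$ contributes nothing by Lemma \ref{zero} (this lemma is stated for $\phi_{\pm\ell}$, hence for $\psi_0^{(\pm\ell)}$ up to scalar). Therefore
$$\int_{(N\cap\Gamma)\ba N}(\mathcal R\psi_0^{(\ell)})(n_xa_yk_\theta)\,dx = e^{2i\theta}\Bigl(y\tfrac{\partial}{\partial y}+\tfrac1{2i}\tfrac{\partial}{\partial\theta}\Bigr)h(y,\theta).$$
Since $\psi_0^{(\ell)}(gk_\theta)=e^{2\ell i\theta}\psi_0^{(\ell)}(g)$, we have $h(y,\theta)=e^{2\ell i\theta}h(y,0)$, so $\tfrac1{2i}\tfrac{\partial}{\partial\theta}h=\ell\,h$ and the whole expression becomes $e^{2i(\ell+1)\theta}\bigl(y\tfrac{d}{dy}+\ell\bigr)h(y,0)$. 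Starting from $\phi_0^N(a_y)=c_0y^{1-\delta}$ and applying the operator $y\tfrac{d}{dy}+\ell$ repeatedly, I get by induction that $\int_{(N\cap\Gamma)\ba N}\psi_0^{(\ell)}(n_xa_y)\,dx = c_0\bigl(\prod_{j=0}^{\ell-1}(1-\delta+j)\bigr)y^{1-\delta} = c_0\tfrac{\Gamma(\ell+1-\delta)}{\Gamma(1-\delta)}y^{1-\delta}$; the lowering operator $\mathcal L$ yields the same factor since $y\tfrac{d}{dy}-\tfrac1{2i}\tfrac{\partial}{\partial\theta}$ acting on the $K$-type $-\ell$ part also produces $y\tfrac{d}{dy}+\ell$.

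Then I would divide by $\|\psi_0^{(\pm\ell)}\|_2=\sqrt{\Gamma(\delta+\ell)\Gamma(1-\delta+\ell)}/\sqrt{\Gamma(\delta)\Gamma(1-\delta)}$ from Lemma \ref{bo}, obtaining
$$\int_{(N\cap\Gamma)\ba N}\phi_{\pm\ell}(n_xa_y)\,dx = c_0\,\frac{\Gamma(\ell+1-\delta)}{\Gamma(1-\delta)}\cdot\frac{\sqrt{\Gamma(\delta)\Gamma(1-\delta)}}{\sqrt{\Gamma(\delta+\ell)\Gamma(1-\delta+\ell)}}\,y^{1-\delta} = c_0\,\frac{\sqrt{\Gamma(\delta)\Gamma(\ell+1-\delta)}}{\sqrt{\Gamma(1-\delta)\Gamma(\delta+\ell)}}\,y^{1-\delta},$$
as claimed. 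Finally, for the uniform bound: the ratio $\sqrt{\Gamma(\delta)\Gamma(\ell+1-\delta)}/\sqrt{\Gamma(1-\delta)\Gamma(\delta+\ell)}$ is bounded in $\ell$ because $\Gamma(\ell+1-\delta)/\Gamma(\ell+\delta)\sim \ell^{1-2\delta}\to 0$ when $\delta>1/2$ (by Stirling), so in fact it tends to $0$; hence the integral is $O(y^{1-\delta})$ uniformly in $\ell$. Alternatively, and more cheaply, the uniform bound is immediate from Corollary \ref{ul}: $|\phi_{\pm\ell}(n_xa_y)|\ll\phi_0(n_xa_y)$ with constant independent of $\ell$, so $|\int\phi_{\pm\ell}(n_xa_y)\,dx|\ll\int\phi_0(n_xa_y)\,dx=c_0y^{1-\delta}$.

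The main obstacle is justifying the interchange of the differential operators $y\tfrac{\partial}{\partial y}$, $\tfrac{\partial}{\partial\theta}$ with the integral over the horocycle, and — when $x_0=\infty$ — the vanishing of the boundary term that makes Lemma \ref{zero} applicable; both are handled by the decay $\phi_0(n_xa_y)\to0$ as $|x|\to\infty$ (valid since $\Lambda(\Gamma)$ is bounded when $\infty\notin\Lambda(\Gamma)$) together with the domination $|\phi_{\pm\ell}|\ll\phi_0$ from Corollary \ref{ul}, which also secures absolute convergence of the integral defining $\phi_{\pm\ell}^N(a_y)$ in the first place.
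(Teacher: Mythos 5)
Your proposal is correct and follows essentially the same route as the paper: establish absolute convergence via Corollary \ref{ul} and Proposition \ref{co}, use Lemma \ref{zero} to kill the $\partial/\partial x$ term when pushing $\mathcal R$ (or $\mathcal L$) past the horocycle integral, induct to get $\psi_0^{(\pm\ell)N}(a_y)=c_0\tfrac{\Gamma(\ell+1-\delta)}{\Gamma(1-\delta)}y^{1-\delta}$, and then normalize with Lemma \ref{bo}. The only cosmetic difference is that you spell out the justification for interchanging differentiation and the horocycle integral (via the domination $|\phi_{\pm\ell}|\ll\phi_0$), which the paper leaves implicit.
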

\begin{proof}
	By  Theorem \ref{ub},  
	\begin{align*}
	\left|\int_{(N\cap \Gamma)\bsl N} \phi_{\pm \ell} (n_xa_y)\; dx\right| &\leq \int_{(N\cap \Gamma)\bsl N} \left|\phi_{\pm \ell}(n_xa_yk_\theta)\right|\; dx
\\	&\ll  \int_{(N\cap \Gamma)\bsl N} \phi_0(n_xa_y)\; dx .
	\end{align*}
Hence by Proposition \ref{co}, the integral $\phi_{\pm \ell}^N (a_y)$ converges absolutely. 
To use an induction, we assume the following 
	\begin{equation} \label{ind1} \psi_0^{(\pm \ell)N} (a_yk_\theta)
 = e^{\pm2 \ell i\theta} c_0\frac{\Gamma(\ell+1-\delta)}{\Gamma(1-\delta)}y^{1-\delta} \end{equation}
	is true.  Then  applying Lemma \ref{zero},
	\begin{align*}&
	\psi_0^{(\pm (\ell+1))N}(a_yk_\theta) \\&= e^{\pm 2i\theta} \int_{(N\cap \Gamma)\bsl N} \left(\pm iy \frac{\partial }{\partial x}+y\frac{\partial }{\partial y}\pm \frac{1}{2i}\frac{\partial }{\partial \theta}\right)\psi_0^{(\pm \ell)}(n_x a_yk_\theta)\; dx\;
	\\&= e^{\pm 2i\theta} \cdot \left(y\frac{\partial }{\partial y}\pm \frac{1}{2i}\frac{\partial}{\partial \theta}\right)
\psi_0^{(\pm \ell)N}(a_yk_\theta)
	\\& =e^{\pm 2i\theta}\cdot
\left(y\frac{\partial }{\partial y}\pm \frac{1}{2i}\frac{\partial}{\partial \theta}\right)
\left( e^{\pm 2\ell i\theta} c_0\frac{\Gamma(\ell +1-\delta)}{\Gamma(1-\delta)}y^{1-\delta} \right)
	\\& = e^{\pm 2(\ell+1)i\theta}  \cdot 
\left(c_0\frac{ \Gamma(\ell +1-\delta)}{\Gamma (1-\delta)}\cdot \left((1-\delta) + \ell\right) y^{1-\delta} \right)
	\\& = e^{\pm 2(\ell+1)i\theta} \cdot \left(c_0\frac{\Gamma (\ell +1+(1-\delta))}{\Gamma (1-\delta)} y^{1-\delta}  \right) \end{align*}
since $z\Gamma(z) =\Gamma(z+1)$.  This proves \eqref{ind1} for all positive integer $\ell$.
Hence the claim follows from Lemma \ref{bo}.
\end{proof}

\section{Thickening of $\phi_{\ell}^N$}\label{w}
 The following lemma is proved in a greater generality in \cite{K1} for all $L^2$-eigenfunctions
in the discrete spectrum of $L^2(\G\ba \bH^2)$. 
\begin{lem}\label{beta}  Suppose that $\infty\notin \Lambda(\G)$
and let $J\subset \br $ an open subset containing $\Lambda(\G)$.
For all $0<y<1$, $$ \int_{J^c} \phi_0(n_x a_y)\; dx \ll y^{\delta}$$
with the implied constant independent of $y$.
\end{lem}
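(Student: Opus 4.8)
The plan is to estimate the integral over $J^c$ directly using the explicit Poisson-type formula \eqref{vp1} for $\phi_0$, exploiting that the Patterson measure $\nu_i$ is supported on the compact set $\Lambda(\G)\subset\br$ (compactness holds since $\infty\notin\Lambda(\G)$ and $\Lambda(\G)$ is closed in $\hat\br$), while $J$ is an open neighborhood of that compact set, so $\mathrm{dist}(\Lambda(\G),J^c)>0$ and $\Lambda(\G)\cup J$ is bounded. First I would write
\[
\int_{J^c}\phi_0(n_xa_y)\,dx
=\int_{J^c}\int_{\Lambda(\G)}\left(\frac{(u^2+1)y}{(x-u)^2+y^2}\right)^{\delta}\,d\nu_i(u)\,dx,
\]
and interchange the order of integration by positivity (Tonelli). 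For fixed $u\in\Lambda(\G)$, the factor $(u^2+1)^{\delta}$ is bounded by a constant $C$ depending only on $\Lambda(\G)$, so the problem reduces to bounding $\int_{J^c}\left(\frac{y}{(x-u)^2+y^2}\right)^{\delta}dx$ uniformly in $u\in\Lambda(\G)$.

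Next I would split $J^c$ into the ``near'' part, where $|x-u|$ is comparable to or smaller than the gap $\eta:=\mathrm{dist}(\Lambda(\G),J^c)>0$, and the ``far'' part. On $J^c$ we always have $|x-u|\ge\eta$, so in fact there is no genuine ``near'' region once $y<1$; the estimate is governed entirely by the tail. Substituting $x=u+yt$ gives
\[
\int_{J^c}\left(\frac{y}{(x-u)^2+y^2}\right)^{\delta}dx
= y^{1-\delta}\int_{\{|t|\ge \eta/y\}}\frac{dt}{(1+t^2)^{\delta}}.
\]
Since $\delta>\tfrac12$, the integrand $(1+t^2)^{-\delta}$ is integrable, and its tail satisfies $\int_{|t|\ge R}(1+t^2)^{-\delta}\,dt\ll R^{1-2\delta}$ for $R\ge 1$. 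Taking $R=\eta/y$ (which is $\ge1$ once $y\le\eta$), this yields $y^{1-\delta}\cdot(\eta/y)^{1-2\delta}\ll_{\eta} y^{\delta}$, uniformly in $u\in\Lambda(\G)$. Multiplying by $C$ and integrating over $\nu_i$ (a finite measure, total mass normalized via $\|\phi_0\|_2=1$) gives the claim for $0<y<\eta$; for $\eta\le y<1$ the bound is immediate since $\phi_0$ is bounded on any compact range of $y$ and $J^c\cap\mathrm{supp}$ considerations give a trivially bounded contribution, so the stated range $0<y<1$ follows after adjusting the implied constant.

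I do not expect any serious obstacle here: everything is a uniform-in-$u$ one-dimensional tail estimate made possible by $\delta>\tfrac12$ and the positive separation between $\Lambda(\G)$ and $J^c$. The only point requiring a little care is the uniformity of the bound $(u^2+1)^{\delta}\le C$ and of the gap $\eta$ over $u\in\Lambda(\G)$, both of which hold precisely because $\Lambda(\G)$ is compact under the hypothesis $\infty\notin\Lambda(\G)$; this is also where the reference \cite{K1} treats the more general statement for all discrete-spectrum eigenfunctions, where one no longer has the clean Poisson formula and must instead invoke decay estimates for $L^2$-eigenfunctions toward the cusp.
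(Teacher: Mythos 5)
Your proposal is correct and takes essentially the same approach as the paper: both use the Poisson-integral formula for $\phi_0$, pull out $(u^2+1)^{\delta}$ (bounded since $\Lambda(\G)$ is compact), make the change of variable $w=(x-u)/y$, and reduce to the tail bound $\int_{|w|\ge \e_0/y}(1+w^2)^{-\delta}\,dw\ll (y/\e_0)^{2\delta-1}$, which requires $\delta>\tfrac12$. The only cosmetic difference is that the paper names this tail an incomplete Beta function $\beta_{y^2/\e_0^2}(\delta-\tfrac12,1-\delta)\ll (y/\e_0)^{2\delta-1}$, whereas you use the elementary estimate directly.
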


\begin{proof}
As we concern only the base eigenfunction $\phi_0$, this can be shown in a simpler way.
Let $\e_0:=\inf_{x\notin J, u\in \Lambda(\G)} |x-u|>0$.
Then by the change of variable $w=\frac{x-u}y$ 
 we have

\begin{align*} \int_{J^c} \phi_0(n_x a_y)\; dx &\le  2 y^{1-\delta}\int_{u\in \Lambda(\G)}(u^2+1)^\delta d\nu_i(u)
\cdot \int_{w=\e_0/ y}^\infty \left( \frac{1}{w^2+1}\right)^\delta dw\end{align*}

The latter integral can be evaluated explicitly as an incomplete Beta function which has known asymptotics:
$$
\int_{\e_0/y}^{\infty} \left( {1 \over w^2+1}\right)^{\delta} dw = c\ \beta_{y^2 / \e_0^2}( \delta-1/2, 1-\delta),
$$
where 
$$
\beta_z(\alpha,\beta)\ll z^{\alpha}.
$$

Therefore
$$ \int_{J^c} \phi_0(n_x a_y)\; dx \ll y^{1-\delta} y^{2(\delta-1/2)}=y^\delta .$$
\end{proof}

\begin{lem}\label{es} Suppose that $\infty\notin \Lambda(\G)$
and let $J\subset \br $ an open subset containing $\Lambda(\G)$. 
Fix $\ell \in \z_{\ge 0}$.   For all $0<y <1$,
	$$\phi_{\pm \ell}^N(a_y)  = \int_J \phi_{\pm \ell}(n_x a_y)\; dx +
O(y^{\delta})$$
with the implied constant independent of $\ell$ and $y$.
\end{lem}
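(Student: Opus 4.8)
The plan is to treat this as the $K$-type-$\pm\ell$ analogue of Lemma \ref{beta} and deduce it from the case $\ell=0$ via the uniform pointwise domination $|\phi_{\pm\ell}|\ll\phi_0$ of Corollary \ref{ul}. First I would record the structural consequences of the hypothesis $\infty\notin\Lambda(\G)$: since every nontrivial element of $N$ is parabolic with fixed point $\infty$, we must have $N\cap\G=\{e\}$, so $x_0=\infty$ and $\phi_{\pm\ell}^N(a_y)=\int_{\br}\phi_{\pm\ell}(n_xa_y)\,dx$; moreover $\Lambda(\G)$ is then a bounded subset of $\br$, so Proposition \ref{co} applies in its first (direct-computation) case. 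By Corollary \ref{ul} the integrand is dominated in absolute value by $\phi_0(n_xa_y)$, with an implied constant not depending on $\ell$, and $\int_{\br}\phi_0(n_xa_y)\,dx=c_0\,y^{1-\delta}<\infty$ by Proposition \ref{co}; hence the integral defining $\phi_{\pm\ell}^N(a_y)$ is absolutely convergent and may legitimately be decomposed as $\int_J+\int_{J^c}$.

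It then suffices to estimate the tail over $J^c$. Applying Corollary \ref{ul} once more,
$$\left|\int_{J^c}\phi_{\pm\ell}(n_xa_y)\,dx\right|\ \le\ \int_{J^c}\bigl|\phi_{\pm\ell}(n_xa_y)\bigr|\,dx\ \ll\ \int_{J^c}\phi_0(n_xa_y)\,dx ,$$
and Lemma \ref{beta}, applied to the open set $J\supset\Lambda(\G)$, bounds the right-hand side by $\ll y^{\delta}$ for all $0<y<1$. Subtracting, $\phi_{\pm\ell}^N(a_y)=\int_J\phi_{\pm\ell}(n_xa_y)\,dx+O(y^{\delta})$, which is the assertion.

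For the uniformity claims: the implied constant is independent of $y$ because the one in Lemma \ref{beta} is, and it is independent of $\ell$ because the only $\ell$-dependent input is Corollary \ref{ul}, whose implied constant is itself $\ell$-independent. There is essentially no obstacle in this argument; the one point that must not be overlooked is to invoke the \emph{uniform} (in $\ell$) form of the domination $|\phi_{\pm\ell}|\ll\phi_0$ — which is precisely why Corollary \ref{ul} was stated with that emphasis — so that the constant absorbed in $O(y^\delta)$ does not degenerate as $\ell\to\infty$.
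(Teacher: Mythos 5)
Your argument is exactly the paper's: dominate $|\phi_{\pm\ell}(n_xa_y)|$ by a constant multiple of $\phi_0(n_xa_y)$ uniformly in $\ell$ via Corollary \ref{ul}, and then apply Lemma \ref{beta} to the tail integral over $J^c$. The extra preliminary remarks (that $N\cap\G=\{e\}$ so $\phi_{\pm\ell}^N(a_y)=\int_\br$, and the absolute-convergence check via Proposition \ref{co}) are correct and merely make explicit what the paper leaves implicit.
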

\begin{proof}
Note that, by Corollary \ref{ul},
	$$\left|\int_{x\in J^c} \phi_{\pm \ell}(n_xa_y)\; dx\right|
	 \leq \int_{x\in J^c} \left|\phi_{\pm \ell}(n_x a_y)\right|\; dx
	 \ll  \int_{J^c} \phi_0(n_x a_y)\; dx $$
since $\phi_0$ is a positive function. 

Hence the claim follows from Lemma \ref{beta}.
\end{proof}

Setting $N^-:=\left\{\bpm 1 & 0 \\ x & 1\ebpm : x\in \R\right\}$,
the product map $N\times A\times N^-\to G$ is a diffeomorphism at a neighborhood of $e$.

Let $dk$ be the invariant probability measure on $K$ and
denote by $dg$ the Haar measure on $G$: $dg= \frac{1}{y^2} dxdydk $ for $g=n_xa_yk$.
 Let $\nu$ be a smooth measure on $AN^-$ such that $dn_x\otimes d\nu(a_yn_{x'}^-) = dg$.
When $\infty\notin \Lambda(\G)$, fix a bounded open interval $J$ which contains
$\Lambda(\G)$ and choose a compactly supported smooth function $0\le \eta\le 1$ on $N$
with $\eta|_{J}=1$. Otherwise, let $\eta=1$ on $[-x_0, x_0)$, which is a fundamental domain for
$N\cap \G \ba N$.
We denote by $U_\e$ the $\e$-neighborhood of $e$ in $G$.
Fix $\e_0>0$ so that
the multiplication map
 $${\rm supp}(\eta)\times (U_{\epsilon_0} \cap AN^-) \to {\rm supp}(\eta) \left(U_{\epsilon_0}\cap AN^- \right) \subset \Gamma\bsl G$$
	is a bijection onto its image. 
	For each $0<\epsilon< \epsilon_0$, let $r_\epsilon$ be a non-negative smooth function in $AN^-$ whose support is contained in 
		$W_\epsilon:= (U_\epsilon\cap A) (U_{\epsilon_0} \cap N^-) $ and 
		$\int_{W_\epsilon}r_\epsilon\; d\nu=1\;.$
		
	We define the following function $\rho_{\eta, \epsilon}$ on $\Gamma\bsl G$:
\begin{equation*}\rho_{\eta, \epsilon}(g)= \begin{cases} \eta(n_x)\cdot r_\epsilon(a_yn_{x'}^-)
&\text{ for $g=n_xa_yn_{x'}^-\in {\rm supp}(\eta)W_\e$ }\\ 0 &\text{ for $g\notin
{\rm supp}(\eta)W_\e$}. \end{cases}\end{equation*}

The inner product on $L^2(\G\ba G)$ is given
by $$\la \psi_1,\psi_2\ra=\int_{\G\ba G} \psi_1(g)\overline{\psi_2(g)} \; dg.$$

 Set $c_\ell= c_0\frac{\sqrt{\Gamma(\delta)\Gamma(\ell +1-\delta)}}{\sqrt{\Gamma(1-\delta)
\Gamma(\delta+\ell)}}$ 
so that for each $\ell\in \z_{\ge 0}$,
$$\phi_{\pm \ell}^N(a_y)= c_\ell y^{1-\delta}$$ by Theorem \ref{l:abs_conv}.

\begin{prop}\label{l:approx_matrix_coeff}
For any $\ell\in \z_{\ge 0}$, we have for all positive $\e \ll 1$,
	$$
\left<a_y\phi_{\pm \ell}, \rho_{\eta, \epsilon}\right> =c_\ell y^{1-\delta}+
O_\eta(\ell \epsilon y^{1-\delta})+O_\eta(\ell y^{\delta})  $$
with the implied constants independent of $\ell$ and $y$.
 \end{prop}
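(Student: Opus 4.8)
The plan is to compute $\langle a_y\phi_{\pm\ell},\rho_{\eta,\epsilon}\rangle$ by unfolding the integral over $\Gamma\ba G$ against the explicit bump-like function $\rho_{\eta,\epsilon}$, and then to estimate the error incurred by replacing $\phi_{\pm\ell}$ evaluated on the support of $\rho_{\eta,\epsilon}$ by its value on the horocycle $Na_y$. First I would write out, using the definition of $\rho_{\eta,\epsilon}$ and the coordinates $g=n_xa_{y'}n_{x'}^-$ with $dg=dn_x\,d\nu(a_{y'}n_{x'}^-)$, that
\begin{align*}
\langle a_y\phi_{\pm\ell},\rho_{\eta,\epsilon}\rangle
&=\int_{\mathrm{supp}(\eta)W_\epsilon}\phi_{\pm\ell}(n_xa_{y'}n_{x'}^- a_y)\,\eta(n_x)\,r_\epsilon(a_{y'}n_{x'}^-)\,dn_x\,d\nu(a_{y'}n_{x'}^-).
\end{align*}
The key idea is that as $\epsilon\to0$ the factor $a_{y'}n_{x'}^-$ ranges over $W_\epsilon\subset U_\epsilon\cap A\cdot(U_{\epsilon_0}\cap N^-)$, and $r_\epsilon$ has total mass $1$; so the leading term is obtained by setting $a_{y'}n_{x'}^-=e$, which gives $\int\phi_{\pm\ell}(n_x a_y)\eta(n_x)\,dn_x$. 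When $\infty\notin\Lambda(\Gamma)$ this equals $\phi_{\pm\ell}^N(a_y)$ up to the tail $\int_{J^c}\phi_{\pm\ell}(n_xa_y)\,dx=O(y^\delta)$ by Lemma \ref{es} (here the independence of $\ell$ comes from Corollary \ref{ul}); when $N\cap\Gamma$ is nontrivial we have $\eta\equiv1$ on the fundamental domain $[-x_0,x_0)$ and the leading term is exactly $\phi_{\pm\ell}^N(a_y)=c_\ell y^{1-\delta}$ by Theorem \ref{l:abs_conv}, with no tail term.

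The main work is controlling the difference $\phi_{\pm\ell}(n_x a_{y'}n_{x'}^-a_y)-\phi_{\pm\ell}(n_x a_y)$ for $a_{y'}n_{x'}^-\in W_\epsilon$. I would commute the small element past $a_y$: since $a_{y'}n_{x'}^-a_y = a_y\,(a_y^{-1}a_{y'}n_{x'}^-a_y)$ and conjugation by $a_y$ with $y<1$ contracts $N^-$ and fixes $A$, the conjugated element lies in a set of size $O(\epsilon)$ as well (this is the standard observation, already used in \cite{KO1}, that thickening in the contracted direction only helps). Thus $n_x a_{y'}n_{x'}^-a_y = n_x a_y h$ with $h\in U_{C\epsilon}$ for an absolute constant $C$. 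Now I use the explicit formula of Theorem \ref{l:rasing-lowering}: writing $\phi_{\pm\ell}(n_xa_yk_\theta)=e^{\pm2\ell i\theta}\,b_\ell\int_{\br}f_u(x,y)^\delta\left(\tfrac{(x-u)\mp iy}{(x-u)\pm iy}\right)^\ell d\nu_i(u)$ with $b_\ell=\tfrac{\sqrt{\Gamma(\delta+\ell)\Gamma(1-\delta)}}{\sqrt{\Gamma(\delta)\Gamma(1-\delta+\ell)}}\le1$, a perturbation of the argument by $h\in U_{C\epsilon}$ changes $(x,y,\theta)$ by $O(\epsilon)$ relative amounts; the modulus-one factor $\left(\tfrac{(x-u)\mp iy}{(x-u)\pm iy}\right)^\ell$ has derivative of size $O(\ell/y)\cdot O(y)=O(\ell)$ in the relevant variables after the change of variable $w=(x-u)/y$, while $f_u^\delta$ and the $K$-factor contribute $O(1)$. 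Integrating against $\phi_0(n_xa_y)\eta(n_x)\,dx$ and using Proposition \ref{co} and Lemma \ref{beta} to bound $\int\phi_0(n_xa_y)\eta(n_x)\,dx\ll y^{1-\delta}$, this yields an error of $O_\eta(\ell\epsilon\,y^{1-\delta})$. Finally the tail term $O(y^\delta)$ from the previous paragraph, also controlled $\ell$-uniformly via Corollary \ref{ul}, may pick up an extra factor $\ell$ when one tracks it through the $K$-factor estimate, giving the stated $O_\eta(\ell y^\delta)$.

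The hardest point, and the one I would treat most carefully, is the uniformity in $\ell$ of the perturbation estimate: the eigenfunctions $\phi_{\pm\ell}$ oscillate like $e^{2\ell i\theta}$ and the inner factor oscillates like $(\,\cdot\,)^\ell$, so a naive bound on $\|\phi_{\pm\ell}\|_{C^1}$ grows like $\ell$ — which is exactly what produces the linear factor $\ell$ in the error terms, and is why the proposition is stated with that factor rather than with an $\ell$-independent constant. The saving grace is Corollary \ref{ul}, which gives the $\ell$-independent pointwise bound $|\phi_{\pm\ell}|\ll\phi_0$; one differentiates the explicit integrand once, bounds the derivative of the unimodular factor by $O(\ell)$ and everything else by the $\phi_0$-integrand, and then integrates. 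I would also need to check that the constant $\epsilon_0$ and the injectivity of $\mathrm{supp}(\eta)\times(U_{\epsilon_0}\cap AN^-)\to\Gamma\ba G$ let me literally unfold the $\Gamma\ba G$ integral as above without overcounting; this is where the hypothesis $\epsilon\ll1$ (i.e. $\epsilon<\epsilon_0$) enters, and it is a routine but necessary bookkeeping step.
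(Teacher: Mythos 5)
Your overall strategy --- unfold the inner product, commute the thickening past $a_y$, use the explicit formula of Theorem \ref{l:rasing-lowering} to isolate the $\ell$-dependent $K$-oscillation, and invoke Corollary \ref{ul} for $\ell$-uniform pointwise control and Lemma \ref{es}, Theorem \ref{l:abs_conv} to evaluate the leading term --- is exactly what the paper does, so this is the same route. Two points where the paper's bookkeeping is cleaner and your version is slightly off:

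First, the claim that $a_y^{-1}a_{y'}n_{x'}^-a_y$ lies in $U_{C\epsilon}$ is not correct uniformly for $0<y<1$. By construction $W_\epsilon=(U_\epsilon\cap A)(U_{\epsilon_0}\cap N^-)$, so the $N^-$-coordinate is only $O(\epsilon_0)$, and after conjugation it becomes $O(\epsilon_0 y)$, which is $O(\epsilon)$ only when $y\ll\epsilon$. The paper avoids this by not pretending the perturbation is $\epsilon$-small: it writes $n_x h a_y$ explicitly in $NAK$-coordinates as $n_{x+x_1}a_{y_1}k_{\theta_{x_0y}}$ with $y_1=y(1+O(\epsilon))$, $x_1=O(y)$, $\theta_{x_0y}=O(x_0 y)$, and then the $O(\ell y^\delta)$ term in the statement absorbs the $y$-sized (not $\epsilon$-sized) contributions. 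In the application of Theorem \ref{m1} one eventually takes $y<\epsilon$, at which point the distinction dissolves, but as stated your $U_{C\epsilon}$ step needs that extra hypothesis.

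Second, you propose to differentiate the explicit Patterson-integral formula and bound the derivative of the unimodular factor by $O(\ell/y)\cdot O(y)$. This works, but the paper bypasses it entirely: once the perturbation is put into $NAK$-coordinates, the relation $\phi_\ell(gk_\theta)=e^{2\ell i\theta}\phi_\ell(g)$ peels off the $\ell$-oscillation as a single scalar $e^{2\ell i\theta_{x_0y}}=1+O(\ell\theta_{x_0y})$, and the remaining $(x,y)$-shift only enters through $\eta(n_{x-x_1})=\eta(n_x)+O(y)$ and $y_1^{1-\delta}=y^{1-\delta}(1+O(\epsilon))$, with no differentiation of $\phi_\ell$ itself required. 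So the paper's version is both simpler and avoids having to justify the interchange of differentiation with the Patterson integral; you may want to restructure your perturbation estimate along those lines.
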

\begin{proof} 
For $x\in \br$, set $\theta_x:=-\arctan x$.
Then we compute $$n_x^-=n_{-\sin\theta_x} a_{\cos^2\theta_x} \kappa_{\theta_x} .$$
For $h=a_{y_0}n_{x_0}^-\in W_{\epsilon}$, $n_x\in N$ and $y>0$,
we can write $$n_x h a_y= n_x n_{-y_0 y\sin\theta_{x_0 y}} a_{y_0 y\cos^2\theta_{x_0 y}}\kappa_{\theta_{x_0 y}} .$$

Since $y_0=1+ O(\epsilon)$ and $x_0=O(1)$,
 we have \begin{enumerate}
          \item $\cos^2\theta_{x_0y}=1+O(\epsilon)$;
\item  $y_0 \cos^2\theta_{x_0 y} = 1+O(\e)$
\item  $y_0 y\sin\theta_{x_0 y}=O(\e)$
\item $e^{2\ell i \theta_{x_0y}}= 1+O(\ell \e) $
\end{enumerate}
 where the implied constants are independent of $0<y<1$.

For $y_1:=	y_0 y\cos^2\theta_{x_0 y}=y(1+O(\e))$ and $x_1:=-y_0 y\sin\theta_{x_0 y}=O(y)$,
$$\phi_{\ell}(n_x h a_y)= e^{2\ell i \theta_{x_0y}} \phi_{\ell}(n_{x+x_1}  a_{y_1})
=(1+O(\ell \e))\cdot \phi_{\ell}(n_{x+x_1}  a_{y_1})$$
and hence
$$\int_{(N\cap \Gamma)\bsl N} \phi_{\ell}(n_x h a_y)\cdot\eta(n_x)\; dx
	=  (1+O(\ell \e)) \int_{(N\cap \Gamma)\bsl N} \phi_{\ell}(n_xa_{y_1})(\eta(n_x) +O (y))\; dx 
$$
where the implied constants are independent of $\ell$. 


Hence  using Lemma \ref{es} and $c_\ell=O(1)$, we deduce
	\begin{align*}	&\int_{(N\cap \Gamma)\bsl N} \phi_{\ell}(n_x h a_y)\cdot\eta(n_x)\; dx
	\\&=   (1+O(\ell \e))  \int_{(N\cap \Gamma)\bsl N} \phi_{\ell}(n_xa_{y_1})(\eta(n_x) +O(y))\; dx 
	 \\&= \int_{(N\cap \Gamma)\bsl N} \phi_{\ell}(n_xa_{y_1})\eta(n_x) dx +O(\ell\e \phi_{\ell}^N(a_{y_1}) ) 
 \\&= c_\ell y_1^{1-\delta} + O( \ell \e y_1^{1-\delta}) + O( \ell y_1^{\delta})
\\&= c_\ell y^{1-\delta} + O( \ell \e y^{1-\delta}) + O( \ell y^{\delta}). 
\end{align*}

Since $\int r_\e d\nu(h)=1$,
it follows that  \begin{align*}\left<a_y\phi_{\ell}, \rho_{\eta, \epsilon}\right>&=\int_{W_\e}r_\e(h)\int_{\G\cap N\ba N} 
                  \phi_{\ell}(n_xha_y) \eta(n_x)\; dx\; d\nu(h) \\&=c_\ell y^{1-\delta} + O(\ell \e y^{1-\delta}) + O(\ell y^{\delta}) .
                 \end{align*}
\end{proof}


\section{Equidistribution of a closed horocycle}

For $\psi\in C_c^\infty(\Gamma\bsl G)$, our goal is to compute
	$$\psi^N(a_y): = \int_{(N\cap \Gamma)\bsl N} \psi(n_x a_y)\; dx$$
in terms of $\phi_{\ell}^N(a_y)$ for $\ell \in \Z$.

Let $\{Z_1,Z_2, Z_3 \}$ be a basis of the Lie algebra of $G$.
For $\psi \in C^\infty (\G\ba G)\cap L^2(\G\ba G)$, and $m\ge 1$,  we consider the following
 Sobolev norm $\mathcal S_m(\psi )$:
$$  \mathcal S_m(\psi )=\max \{\| Z_{i_1}\cdots Z_{i_n} (\psi) \|_2 :
1\le i_j\le 3, \;\; 0\le n\le m \}.$$

\begin{lem}\label{haha} Fix $m\in \N$.
 For any $\psi\in C^\infty(\Gamma\bsl G)\cap L^2(\G\ba G)$ and for all $|\ell|$ large,
$$ \left| \left<\psi, \phi_{\ell}\right> \right|\ll (|\ell| +1)^{-m}  \mathcal S_m(\psi).$$
In particular, $$\sum_{\ell \in \Z}  |c_\ell \left<\psi, \phi_{\ell}\right>|<\infty .$$ \end{lem}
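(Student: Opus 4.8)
The plan is to integrate by parts against the Casimir operator. The key observation is that each $\phi_\ell$ lies in the representation $V_\delta$ on which $\mathcal C$ acts by the fixed scalar $\delta(1-\delta)$, so applying powers of $\mathcal C$ does not help directly; instead I would exploit the $K$-type. Since $\phi_\ell(gk_\theta)=e^{2\ell i\theta}\phi_\ell(g)$, the operator $W:=\frac{\partial}{\partial\theta}$ (the generator of $K$, an element of the Lie algebra) satisfies $W\phi_\ell = 2\ell i\,\phi_\ell$. Hence for any $m\ge 1$,
\begin{equation*}
(2\ell i)^m \la\psi,\phi_\ell\ra = \la\psi, W^m\phi_\ell\ra = \pm\la W^m\psi,\phi_\ell\ra,
\end{equation*}
using that $W$ is skew-adjoint on $L^2(\G\ba G)$ (as the derivative of a unitary one-parameter group). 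By Cauchy--Schwarz and $\|\phi_\ell\|_2=1$ this gives $|\la\psi,\phi_\ell\ra|\le (2|\ell|)^{-m}\|W^m\psi\|_2 \ll (|\ell|+1)^{-m}\mathcal S_m(\psi)$ for all $|\ell|$ large, which is exactly the first assertion. Here one should take $W$ to be (a constant multiple of) one of the basis elements $Z_i$ of the Lie algebra so that $\|W^m\psi\|_2$ is literally bounded by $\mathcal S_m(\psi)$; otherwise one absorbs a harmless constant depending only on the fixed basis.

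For the second assertion I would first recall from Theorem~\ref{l:abs_conv} that $c_\ell = c_0\sqrt{\Gamma(\delta)\Gamma(\ell+1-\delta)}/\sqrt{\Gamma(1-\delta)\Gamma(\delta+\ell)}$, and by Stirling's formula $\Gamma(\ell+1-\delta)/\Gamma(\ell+\delta)\sim \ell^{1-2\delta}$ as $\ell\to\infty$, so $c_\ell = O(\ell^{(1-2\delta)/2})=O(\ell^{1/2})$ since $\delta>1/2$ (indeed $c_\ell\to 0$, but even the crude bound $c_\ell=O(\ell^{1/2})$ suffices). Combining with the first part applied with, say, $m=2$, one gets $|c_\ell\la\psi,\phi_\ell\ra|\ll \ell^{1/2}(\ell+1)^{-2}\mathcal S_2(\psi) = O(\ell^{-3/2}\mathcal S_2(\psi))$ for large $|\ell|$, which is summable; the finitely many remaining terms are trivially finite since $|\la\psi,\phi_\ell\ra|\le\|\psi\|_2$ and $|c_\ell|<\infty$.

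The main (very minor) obstacle is just bookkeeping: making sure the chosen Lie-algebra element really generates $K$ and is skew-adjoint, and matching the normalization $W\phi_\ell=2\ell i\phi_\ell$ with the defining relation $\phi_\ell(gk_\theta)=e^{2\ell i\theta}\phi_\ell(g)$ — this is immediate from differentiating at $\theta=0$. No genuine analytic difficulty arises because all the heavy lifting (the existence and structure of $V_\delta$, the value of $c_\ell$, and the absolute convergence of $\phi_\ell^N$) is already in place from the earlier sections; the asymptotics of $c_\ell$ are a one-line Stirling estimate.
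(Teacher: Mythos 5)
Your proof is correct and follows essentially the same route as the paper: integrate by parts against $\partial/\partial\theta$ (the paper's $H$), use skew-adjointness of the $K$-generator and $\partial_\theta\phi_\ell=2\ell i\,\phi_\ell$, then Cauchy--Schwarz with $\|\phi_\ell\|_2=1$. For the summability the paper simply cites $c_\ell=O(1)$ (which already follows from Corollary~\ref{ub} via Theorem~\ref{l:abs_conv}) and takes $m=2$; your Stirling estimate re-derives this (in fact shows $c_\ell\to 0$), so it is a slightly longer but equivalent justification.
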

\begin{proof} The element $H=\begin{pmatrix} 0 & 1 \\ -1 & 0
                       \end{pmatrix}$ in the Lie algebra of $G$ corresponds to the differential operator
$\frac{\partial}{\partial \theta}$
and  $\frac{\partial}{\partial \theta} \phi_{\ell} =2\ell i \phi_{\ell} .$
Hence
$$\left< \frac{\partial}{\partial \theta}\psi , \phi_{\ell} \right> = \left< \psi, -\frac{\partial}{\partial \theta} \phi_{\ell} \right>
 = 2\ell i\left< \psi ,\phi_{\ell} \right> .$$
Similarly, $$\left| \left< \frac{\partial^m}{\partial \theta^m}\psi , \phi_{\ell} \right>\right| 
 = 2^m \ell^m |\left< \psi ,\phi_{\ell} \right> |.$$
Hence $$\left|\left< \psi ,\phi_{\ell} \right>\right| \ll \frac{1}{(| \ell|+1) ^m} \cdot \left\| \frac{\partial^m}{\partial \theta^m}\psi\right\| _2$$
proving the first claim. 
Since $c_\ell=O(1)$, the second claim follows. \end{proof}

Fix $ 1/2<s_1<\delta$ so that there is no eigenvalue of the Laplacian
between $s_1(1-s_1)$ and $\delta (1-\delta)$ in $L^2(\G\ba \bH^2)$. 
\begin{lem}\label{l:matrix_coeff}
For any $\psi_1,\psi_2\in C^\infty(\Gamma\bsl G)$ with  $\mathcal S_1(\psi_i)<\infty$, and $0< y< 1$, we have
	$$\left<a_y\psi_1, \psi_2\right> = \sum_{\ell \in \Z} \left< \psi_1, \phi_{\ell}\right>\left<a_y\phi_{\ell}, \psi_2\right> + 
O\left(y^{1-s_1} \cdot \mathcal S_1(\psi_1)\cdot \mathcal S_1(\psi_2)\right).$$
\end{lem}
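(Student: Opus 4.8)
The plan is to decompose the matrix coefficient $\langle a_y\psi_1,\psi_2\rangle$ according to the orthogonal decomposition of $L^2(\Gamma\backslash G)$ into the irreducible subrepresentation $V_\delta$ (which carries the complementary-series parameter $\delta$, i.e.\ eigenvalue $\delta(1-\delta)$) and its orthogonal complement $V_\delta^\perp$. First I would write $\psi_1 = \psi_1^{V_\delta} + \psi_1^\perp$, where $\psi_1^{V_\delta} = \sum_{\ell\in\Z}\langle\psi_1,\phi_\ell\rangle\phi_\ell$ is the projection onto $V_\delta = \oplus_{\ell} \C\phi_\ell$; since $a_y$ acts unitarily and preserves this decomposition, we get
\begin{equation*}
\langle a_y\psi_1,\psi_2\rangle = \sum_{\ell\in\Z}\langle\psi_1,\phi_\ell\rangle\langle a_y\phi_\ell,\psi_2\rangle + \langle a_y\psi_1^\perp,\psi_2^\perp\rangle,
\end{equation*}
so the whole statement reduces to bounding the second term by $O(y^{1-s_1}\mathcal S_1(\psi_1)\mathcal S_1(\psi_2))$. (One must first check the interchange of $\sum_\ell$ and $\langle\cdot,\cdot\rangle$ is legitimate; this follows from Lemma~\ref{haha} applied to $\psi_1$ together with $|\langle a_y\phi_\ell,\psi_2\rangle|\le\|\psi_2\|_2$, giving absolute convergence.)

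The heart of the matter is therefore a quantitative decay estimate for the matrix coefficient $\langle a_y\psi_1^\perp,\psi_2^\perp\rangle$ restricted to the part of $L^2(\Gamma\backslash G)$ "above" $V_\delta$. By the spectral choice of $s_1$, there is no $K$-fixed spectrum (hence, by $\mathfrak{sl}_2$-representation theory, no spherical constituent at all) in $V_\delta^\perp$ with Laplace parameter in the interval $(s_1(1-s_1),\delta(1-\delta))$; the remaining spectrum consists of the finitely many eigenvalues $\le s_1(1-s_1)$, the tempered part (principal series and discrete series), and possibly complementary-series pieces with parameter $\le s_1$. For complementary series $V_s$ with $1/2<s\le s_1$ the standard decay rate for the $A$-action on smooth vectors is $\|\langle a_y v,w\rangle\| \ll y^{1-s}\|v\|_{\mathrm{Sob}}\|w\|_{\mathrm{Sob}} \le y^{1-s_1}(\cdots)$ for $y<1$; for tempered pieces the decay is even faster, of order $y^{1/2}$ up to polynomial-in-$\log$ factors, hence also $O(y^{1-s_1})$ since $s_1<1$ forces $1-s_1<1/2$. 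I would cite the standard pointwise bounds on matrix coefficients of $\PSL_2(\R)$-representations (e.g.\ via the asymptotics of $K$-finite matrix coefficients and the Howe--Tan / Cowling--Haagerup--Howe estimates, exactly as used in \cite{KO1}) to get, for a single irreducible constituent and $K$-finite vectors of $K$-type bounded by $L$, a bound $\ll (1+L)^{C} y^{1-s_1}\|v\|_2\|w\|_2$.

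To assemble the global bound I would expand $\psi_1^\perp = \sum_\ell (\psi_1^\perp)_\ell$ and $\psi_2^\perp=\sum_\ell(\psi_2^\perp)_\ell$ into $K$-isotypic components (each $(\psi_i^\perp)_\ell$ transforms by $e^{2\ell i\theta}$ under $k_\theta$), note that $a_y$ preserves each isotypic subspace only up to a shift by at most two $K$-types under the $\mathfrak{sl}_2$-action — more precisely $\langle a_y(\psi_1^\perp)_m,(\psi_2^\perp)_n\rangle$ is controlled by the constituent-wise bound with $L\asymp |m|+|n|$ — and then sum:
\begin{equation*}
|\langle a_y\psi_1^\perp,\psi_2^\perp\rangle| \ll y^{1-s_1}\sum_{m,n}(1+|m|+|n|)^{C}\|(\psi_1^\perp)_m\|_2\|(\psi_2^\perp)_n\|_2.
\end{equation*}
Using $\|(\psi_i^\perp)_m\|_2 = \|\,\partial_\theta^{\,k}(\psi_i^\perp)_m\|_2/(2|m|)^k$ for any $k$ (as in Lemma~\ref{haha}), i.e.\ trading powers of $(1+|m|)$ for $\theta$-derivatives and then Cauchy--Schwarz in $m$, the double sum is bounded by a fixed Sobolev norm of each $\psi_i$; in fact degree-one suffices after a more careful bookkeeping because the constituent-wise exponent $C$ can be absorbed — this is the point where I'd follow \cite{KO1} closely to confirm that $\mathcal S_1$ (rather than a higher $\mathcal S_m$) is enough, which relies on the decay $y^{1-s_1}$ being strictly better than the "critical" rate and on summing a convergent series in the $K$-parameter. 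The main obstacle is precisely this last bookkeeping: getting the clean $\mathcal S_1$-dependence stated in the lemma, rather than some $\mathcal S_m$, requires the sharp form of the $K$-type-uniform matrix coefficient bounds and a careful Cauchy--Schwarz, and this is where I would need to invoke the corresponding estimate from \cite{KO1} essentially verbatim.
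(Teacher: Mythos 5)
Your proposal follows essentially the same route as the paper: decompose $\psi_1$ as its $V_\delta$-projection plus $\psi_1^\perp\in V_\delta^\perp$, split the matrix coefficient accordingly, and bound $\langle a_y\psi_1^\perp,\psi_2\rangle$ by $y^{1-s_1}\mathcal S_1(\psi_1)\mathcal S_1(\psi_2)$ via the spectral gap at $s_1$ — the paper simply cites this last estimate from the proof of Corollary~5.6 in \cite{KO2}, whereas you sketch how it is proved. (One small slip in your justification of the tempered case: the inequality $1-s_1<1/2$ comes from $s_1>1/2$, not from $s_1<1$.)
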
	
\begin{proof} 
	We have	$L^2(\Gamma\bsl G) = V_{\delta}\oplus V_\delta^{\perp}$
	where $V_\delta^\perp$ does not contain any complementary series $V_s$ with parameter $s>\delta$. We can write 
	$$\psi_1 = \sum_{\ell \in \Z} \left<\psi_1, \phi_{\ell}\right>\phi_{\ell} + \psi_1^{\perp}$$
with $\psi_1^{\perp}\in V_\delta^\perp$ since 
	$\left<\psi_1-\sum_{\ell \in \Z}\left<\psi_1, \phi_{\ell}\right>\phi_{\ell}, \phi\right>=0$
	for any $\phi\in V_{\delta}$. Hence
	\begin{align*}
	\left<a_y\psi_1, \psi_2\right> & = \sum_{\ell \in \Z} \left<\psi_1, \phi_{\ell}\right>\left<a_y\phi_{\ell}, \psi_2\right> +\left<a_y\psi_1^{\perp}, \psi_2\right> .
	\end{align*}

On the other hand, by the assumption on $s_1$, we have (cf. the proof of corollary 5.6 in \cite{KO2})
$$\left< a_y \psi_1^\perp ,\psi_2\right> \ll  y^{1-s_1}\cdot \mathcal S_1(\psi_1)\cdot \mathcal S_1(\psi_2).$$ This implies the claim.
\end{proof}
	


We refer to \cite{KO1} for the next lemma: 
\begin{lem}\label{l:hat_psi}
For $\psi\in C_c^\infty(\Gamma\bsl G)$, there exists $\widehat \psi\in C_c^\infty(\Gamma\bsl G)$ such that
	\begin{enumerate}
	\item for all small $\epsilon>0$, and $h\in U_\epsilon$, 
		$$\left|\psi(g)-\psi(gh)\right|\leq \epsilon\cdot\widehat \psi(g)$$
		for all $g\in \Gamma\bsl G$. 
	\item For all $m\in \N$,
$\mathcal S_m(\widehat \psi)\ll \mathcal S_{3}(\psi)$  where the implied constant depends only on ${\rm supp}(\psi)$.
	\end{enumerate}
\end{lem}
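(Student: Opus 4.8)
\textbf{Proof plan for Lemma \ref{l:hat_psi}.}

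The plan is to construct $\widehat\psi$ by differentiating $\psi$ along the Lie algebra and then smoothing. Concretely, first I would fix a basis $\{Z_1,Z_2,Z_3\}$ of the Lie algebra $\mathfrak g$ of $G$ together with its dual notion of a word length on the universal enveloping algebra, and write, for $h\in U_\epsilon$ close to $e$, the element $h=\exp(X)$ with $\|X\|\ll\epsilon$. The mean value theorem along the one-parameter subgroup $t\mapsto g\exp(tX)$ gives
\begin{equation*}
\bigl|\psi(g)-\psi(gh)\bigr|\;\le\;\epsilon\cdot\sup_{0\le t\le 1}\ \max_{1\le i\le 3}\ \bigl|(Z_i\psi)(g\exp(tX))\bigr|.
\end{equation*}
So the naive candidate is $g\mapsto \sup_{h\in U_{\epsilon_0}}\max_i|(Z_i\psi)(gh)|$, but this is merely continuous and compactly supported, not smooth, so the first obstacle is to replace this sup by a genuinely smooth majorant.

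The remedy is a standard construction: pick a nonnegative $\chi\in C_c^\infty(G)$ supported in $U_{\epsilon_0}$ with $\chi\ge 1$ on $U_{2\epsilon_0/3}$ (say), and set
\begin{equation*}
\widehat\psi(g)\;:=\;C\sum_{i=1}^{3}\Bigl(\,\bigl|Z_i\psi\bigr|^2 * \chi\,\Bigr)(g)^{1/2}
\quad\text{or, more simply,}\quad
\widehat\psi(g):=C\sum_{i=1}^3\bigl((Z_i\psi)\cdot\overline{(Z_i\psi)}\bigr)*\chi\,(g),
\end{equation*}
where $*$ denotes convolution on $G$ (acting on the right variable) and $C$ is a constant depending on $\chi$ and $\sup_{h\in U_{\epsilon_0}}|\det(\mathrm{Ad}\,h)|$. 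The point is that convolution with a fixed smooth bump is smoothing, so $\widehat\psi\in C_c^\infty(\Gamma\bsl G)$ with support contained in $\mathrm{supp}(\psi)\cdot U_{\epsilon_0}$; moreover $((Z_i\psi)^2*\chi)(g)\ge c\,|(Z_i\psi)(gh)|^2$ for every $h$ in a slightly smaller neighborhood of $e$, which delivers the pointwise domination in part (1) after adjusting $\epsilon$ by a harmless constant (absorbed into the definition of $U_\epsilon$, or by shrinking $\epsilon_0$). Using the square rather than the plain convolution of $|Z_i\psi|$ keeps everything smooth; taking $\widehat\psi$ to be the un-rooted quadratic expression avoids the mild nuisance that $\sqrt{\cdot}$ is not smooth at $0$, at the cost of only a constant since $\mathcal S_3(\psi)<\infty$ makes all these quantities bounded.

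For part (2), I would differentiate under the convolution: $Z_{j_1}\cdots Z_{j_n}(f*\chi)=\bigl((Z_{j_1}\cdots Z_{j_n}f)*\chi\bigr)$ when the $Z$'s act on the $G$-variable being convolved (up to reordering via $\mathrm{Ad}$, which contributes only bounded structure constants), and then Young's inequality $\|f*\chi\|_2\le\|f\|_1\|\chi\|_2$ together with Cauchy–Schwarz on $\mathrm{supp}(\psi)$ to pass from $L^1$ to $L^2$. Since $\widehat\psi$ is built from the \emph{first} derivatives $Z_i\psi$, each further derivative up to order $m$ produces terms involving at most $m+1\le 3$ derivatives of $\psi$ (for the relevant range $m\le 2$, which is all that is used downstream through $\mathcal S_3$); hence $\mathcal S_m(\widehat\psi)\ll_{\chi,\,\mathrm{supp}(\psi)}\mathcal S_{m+1}(\psi)\le \mathcal S_3(\psi)$. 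The implied constant depends on $\chi$ (fixed once and for all) and on $\mathrm{supp}(\psi)$ through the $L^1$–$L^2$ comparison, exactly as claimed. The only genuinely delicate point is bookkeeping the $\mathrm{Ad}$-twists when moving invariant derivatives past the convolution, but these are uniformly bounded on the compact set $\overline{U_{\epsilon_0}}$ and therefore cause no loss beyond constants.
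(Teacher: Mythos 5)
Your mean value theorem step is fine and correctly reduces the problem to producing a smooth, compactly supported majorant of $g\mapsto\max_i\sup_{h\in U_{\epsilon_0}}|(Z_i\psi)(gh)|$. The gap is in the smoothing: convolution against a fixed probability density $\chi$ is an \emph{averaging} operation, and the inequality you invoke, $\bigl((Z_i\psi)^2*\chi\bigr)(g)\ge c\,|(Z_i\psi)(gh)|^2$, is false in general. If $|Z_i\psi|$ is sharply peaked, with height $M$ on a ball of radius $\rho\to 0$ around $g$, then $(|Z_i\psi|^2*\chi)(g)$ is of order $M^2\rho^{3}$, far smaller than $|(Z_i\psi)(g)|^2=M^2$; so part (1) is not established. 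There is a second, independent flaw in the squared version: even granting that inequality, you would only obtain $|(Z_i\psi)(gh)|\le(\widehat\psi(g)/c)^{1/2}$, and the passage to the needed \emph{linear} bound $|(Z_i\psi)(gh)|\ll\widehat\psi(g)$ is not ``only a constant'' --- the inequality $a\le Ca^2$ fails exactly where $a$ is small, which is most of the support. (A minor side note: for right-convolution on $\Gamma\backslash G$, a left-invariant derivative $Z$ satisfies $Z(f*\chi)=f*(\text{derivative of }\chi)$, so all derivatives land on $\chi$ rather than on $f$. This would actually simplify your part (2), but it cannot repair the majorization gap in part (1).)

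The intended argument (the paper defers to [KO1]) is more elementary and avoids convolution entirely. After the mean value theorem step, the right-hand side vanishes for $g$ outside the fixed compact set $Q:=\mathrm{supp}(\psi)\cdot\overline{U_{\epsilon_0}}$. On a $3$-dimensional manifold, Sobolev embedding on the compact set $Q$ gives $\max_i\|Z_i\psi\|_\infty\ll_{Q}\mathcal{S}_3(\psi)$ --- this is exactly where the degree $3$ in the lemma comes from, a point your construction never explains. Now fix once and for all a nonnegative $\Phi\in C_c^\infty(\Gamma\backslash G)$ with $\Phi\ge 1$ on $Q$, and set $\widehat\psi:=C\,\mathcal{S}_3(\psi)\,\Phi$ with $C$ the Sobolev constant (depending only on $Q$, hence only on $\mathrm{supp}(\psi)$). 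Then $|\psi(g)-\psi(gh)|\le\epsilon\,\max_i\|Z_i\psi\|_\infty\le\epsilon\,\widehat\psi(g)$ for $g\in Q$ and trivially otherwise, which is part (1); and $\mathcal{S}_m(\widehat\psi)=C\,\mathcal{S}_3(\psi)\,\mathcal{S}_m(\Phi)\ll_{Q}\mathcal{S}_3(\psi)$, which is part (2).
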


\begin{lem}\label{jjj} Let $\infty\notin \Lambda(\G)$.
For a fixed compact subset $Q$ of $ G$, there exists a bounded subset $J\subset \br$
such that $n_xa_y\notin \Gamma Q$ for all $x\notin J$ and any $0<y<1$.
\end{lem}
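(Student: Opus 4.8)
The statement is a compactness-type assertion: when $\infty\notin\Lambda(\G)$, any compact set $Q\subset G$ is ``seen'' in $\G\ba G$ only over a bounded range of the $N$-coordinate. The natural strategy is to translate the condition $n_xa_y\in\G Q$ into a statement about the hyperbolic geometry of $\G\ba\bH^2$, using that $n_xa_y$ corresponds to the point $x+iy$ (with a frame) in $\bH^2$. First I would fix, as in the proof of Proposition \ref{co}, a fundamental domain $\mathcal F$ for $\G$ in $\bH^2$ such that a cusp neighborhood of the form $(-x_0,x_0)\times[Y_0,\infty)$ (or, when $x_0=\infty$, simply the region $\{|x|>R_0\}$ for $R_0$ large) injects into $\mathcal F$; this uses that $\G$ is finitely generated and that $\infty\notin\Lambda(\G)$, so that the geodesic ray toward $\infty$ eventually leaves every compact part of $\G\ba\bH^2$ — equivalently, $\Lambda(\G)$ is a bounded subset of $\br$.

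The key point is a quantitative version of ``$\infty$ is not a limit point'': there is a horoball based at $\infty$, say $\{y>Y_0\}$, whose image in $\G\ba\bH^2$ is a precisely invariant (embedded) neighborhood of the end toward $\infty$, and the projection of $Q$ to $\G\ba\bH^2$ is a fixed compact set $\bar Q$ disjoint — after shrinking, if necessary — from a smaller such horoball $\{y>Y_1\}$ for appropriate $Y_1\ge Y_0$. I would then argue in two regimes for the point $z=x+iy$ with $0<y<1$: either the $\G$-orbit of $z$ meets $Q$ inside the horoball $\{y>Y_1\}$ — impossible since that horoball's image avoids $\bar Q$ — or $z$ is $\G$-equivalent to a point of $Q$ lying in the compact complement $\bar{\mathcal F}\setminus\{y>Y_1\}$; the latter is contained in a fixed compact subset $C$ of $\bH^2$ independent of $x,y$. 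So $n_xa_y\in\G Q$ forces $z\in \G C$, i.e. $z$ lies within bounded hyperbolic distance of a $\G$-translate of a fixed compact set. Since $0<y<1$, the constraint $d_{\bH^2}(z,\gamma C)\le \mathrm{diam}_{hyp}(C)$ for some $\gamma\in\G$, together with the fact that such $z$ must then lie in a $\G$-translate of a compact set meeting the strip $0<y<1$, bounds $|x|$: concretely, the set $\{z=x+iy: 0<y<1,\ z\in\G C\}$ projects to a relatively compact subset of $\G\ba\bH^2$, hence its preimage in the strip, intersected with a fundamental domain, has bounded $x$-extent, and moving back out by deck transformations that keep $0<y<1$ cannot push $x$ to infinity because those $\gamma$ with $\gamma z$ in the strip and $z$ near $\infty$ form, by discreteness and $\infty\notin\Lambda(\G)$, a controlled family.

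A cleaner way to package the last step, which I would prefer to write out: suppose for contradiction there are $x_n\to\infty$ (or $-\infty$) and $0<y_n<1$ with $n_{x_n}a_{y_n}=\gamma_n q_n$ for $\gamma_n\in\G$, $q_n\in Q$. Passing to a subsequence, $q_n\to q\in Q$, so $\gamma_n^{-1}n_{x_n}a_{y_n}\to q$ in $G$; in particular the points $\gamma_n^{-1}(x_n+iy_n)$ converge in $\bH^2$ to the point $q(i)$. But $x_n+iy_n\to\infty$ in $\partial\bH^2$ (since $y_n$ bounded and $|x_n|\to\infty$), and therefore $\gamma_n^{-1}(x_n+iy_n)$ stays in a compact set only if $\gamma_n^{-1}$ moves a neighborhood of $\infty$ in; writing $\gamma_n^{-1}=\sm a_n & b_n\\ c_n & d_n\esm$, convergence of $\gamma_n^{-1}(x_n+iy_n)$ forces $\gamma_n^{-1}(\infty)=a_n/c_n$ to converge to a point of $\br\cup\{\infty\}$, and since $\{\gamma_n\}$ is either finite or escapes to infinity in $\G$, that limit point lies in $\Lambda(\G)$ in the escaping case — while it equals the finite real number $\lim a_n/c_n$, contradicting $\infty\notin\Lambda(\G)$ unless $\{\gamma_n\}$ is finite; and if $\{\gamma_n\}$ is finite, then $x_n+iy_n$ itself stays in a compact set, contradicting $|x_n|\to\infty$. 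The main obstacle is this escaping-subsequence analysis: one must be careful that the relevant endpoints really land in $\Lambda(\G)$ and that the compactness of $Q$ is used correctly, i.e. that $\gamma_n^{-1}q_n$ converging in $G$ genuinely pins down both the basepoint $\gamma_n^{-1}(\infty)$ and rules out the escape to the boundary point $\infty$. Everything else — the existence of the embedded horoball and the fundamental domain — is standard for finitely generated $\G$ with $\infty\notin\Lambda(\G)$, and indeed was already invoked in Proposition \ref{co}.
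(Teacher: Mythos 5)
Your overall strategy is the same as the paper's: argue by contradiction, use compactness of $Q$ to pass to a convergent subsequence $q_n\to q\in Q$, and observe that $x_n+iy_n\to\infty$ in $\overline{\bH^2}$. Up to that point you are exactly on the paper's track. The problem is the final deduction. Having reached ``$\gamma_n^{-1}(x_n+iy_n)\to q(i)$ in $\bH^2$ while $x_n+iy_n\to\infty$,'' the clean conclusion is immediate and direct: since $\gamma_n$ is an isometry, $d(\gamma_n(q(i)),\,x_n+iy_n)=d(q(i),\gamma_n^{-1}(x_n+iy_n))\to 0$, so $\gamma_n(q(i))\to\infty$ as well; $q(i)\in\bH^2$ is a fixed basepoint, so $\infty$ is an accumulation point of a $\Gamma$-orbit, i.e.\ $\infty\in\Lambda(\G)$, contradiction. (The paper phrases this as $\gamma_j(w)\to\infty$.)

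Instead you switch to tracking $\gamma_n^{-1}(\infty)=a_n/c_n$, and the chain of reasoning there does not close. You argue that $a_n/c_n$ converges to a finite real number which, because $\{\gamma_n\}$ escapes, lies in $\Lambda(\G)$, and then assert this ``contradicts $\infty\notin\Lambda(\G)$.'' That is a non sequitur: a finite real limit point of $\G$-orbits lying in $\Lambda(\G)$ is entirely consistent with $\infty\notin\Lambda(\G)$; it produces no contradiction at all. The fact you actually need is that the \emph{attracting} boundary point of the sequence $\gamma_n$ (not the repelling one $\gamma_n^{-1}(\infty)$) must equal $\infty$, which is what the direct estimate $\gamma_n(q(i))\to\infty$ delivers without any matrix bookkeeping or north--south dynamics. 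Your earlier, sketchier horoball paragraph also never really closes the bound on $|x|$ (``a controlled family'' is doing all the work), so as written neither variant reaches the conclusion. The fix is short: replace the $a_n/c_n$ discussion with the one-line observation above.
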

\begin{proof}
If not, there exist sequences $x_j\to \infty$, $y_j\in \br$, $\gamma_j\in \G$ and $w_j\in Q$
such that $n_{x_j}a_{y_j}=\gamma_j w_j$. As $Q$ is compact, we may assume $w_j \to w\in Q$.
On the other hand, 
$n_{x_j}a_{y_j}(i)=x_j+y_j i\to \infty$ as $x_j \to \infty$.
Hence $\gamma_j(w)\to \infty$, implying that $\infty\in \Lambda(\G)$, contradiction.
\end{proof}

\begin{thm}\label{m1} For any $\psi\in C_c^{\infty}(\Gamma\bsl G)$
$$\psi^N(a_y) =\sum_{\ell \in \Z}  c_\ell \left<\psi, \phi_{\ell}\right> y^{1-\delta}
+O(\mathcal S_3(\psi) y^{  1-\delta+\tfrac{2{\s}_\G}5}) .$$
\end{thm}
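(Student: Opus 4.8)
The plan is to realize $\psi^N(a_y)$ as an inner product of the form $\langle a_y\psi_1,\psi_2\rangle$, then expand that inner product spectrally using Lemma \ref{l:matrix_coeff}, and finally control the terms $\langle a_y\phi_\ell,\psi_2\rangle$ using Proposition \ref{l:approx_matrix_coeff}, optimizing over the thickening parameter $\epsilon$ at the end. First I would take $\psi_2=\rho_{\eta,\epsilon}$, the thickened bump along $AN^-$ built in Section \ref{w}, and $\psi_1=\psi$. The geometry of the thickening (together with Lemma \ref{jjj} when $\infty\notin\Lambda(\G)$, which guarantees that the relevant mass of the horocycle stays in a bounded $x$-range where $\eta\equiv 1$) gives
$$\langle a_y\psi,\rho_{\eta,\epsilon}\rangle = \psi^N(a_y) + O\big(\epsilon\,\mathcal S_?(\psi)\,\phi_0^N(a_y)\big) + (\text{tail errors}),$$
where the first error comes from replacing $\psi(n_x h a_y)$ by $\psi(n_x a_y)$ on $\mathrm{supp}(r_\epsilon)$, estimated via Lemma \ref{l:hat_psi}; here the factor $\phi_0^N(a_y)\asymp y^{1-\delta}$ controls the horocycle mass, so this error is $O(\epsilon\,\mathcal S_3(\psi)\,y^{1-\delta})$, and any contribution from $\eta\ne 1$ is $O(\mathcal S_3(\psi)y^\delta)$ by Lemma \ref{beta}/\ref{es}.

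Next I would apply Lemma \ref{l:matrix_coeff} with $\psi_1=\psi$, $\psi_2=\rho_{\eta,\epsilon}$:
$$\langle a_y\psi,\rho_{\eta,\epsilon}\rangle=\sum_{\ell\in\Z}\langle\psi,\phi_\ell\rangle\,\langle a_y\phi_\ell,\rho_{\eta,\epsilon}\rangle + O\big(y^{1-s_1}\mathcal S_1(\psi)\,\mathcal S_1(\rho_{\eta,\epsilon})\big).$$
Since $\rho_{\eta,\epsilon}$ has sup-norm and derivatives blowing up like a negative power of $\epsilon$ (it is an $\epsilon$-bump in the $A$-direction), $\mathcal S_1(\rho_{\eta,\epsilon})\ll \epsilon^{-3/2}$ or so; I'll keep this as $\epsilon^{-\kappa}$ for an explicit $\kappa$. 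Then Proposition \ref{l:approx_matrix_coeff} replaces each $\langle a_y\phi_\ell,\rho_{\eta,\epsilon}\rangle$ by $c_\ell y^{1-\delta}+O(\ell\epsilon y^{1-\delta})+O(\ell y^\delta)$. Multiplying by $\langle\psi,\phi_\ell\rangle$ and summing, the main terms assemble into $\sum_\ell c_\ell\langle\psi,\phi_\ell\rangle\,y^{1-\delta}$ (the sum converges absolutely by Lemma \ref{haha}), while the error terms are $O\big(\epsilon y^{1-\delta}\sum_\ell \ell|\langle\psi,\phi_\ell\rangle|\big)+O\big(y^\delta\sum_\ell\ell|\langle\psi,\phi_\ell\rangle|\big)$; by Lemma \ref{haha} with $m=3$ (or higher), $\sum_\ell\ell|\langle\psi,\phi_\ell\rangle|\ll\mathcal S_3(\psi)$.

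Collecting everything, I get, for a parameter $0<\epsilon<\epsilon_0$ of my choosing,
$$\psi^N(a_y)=\sum_{\ell\in\Z}c_\ell\langle\psi,\phi_\ell\rangle\,y^{1-\delta}+O\Big(\mathcal S_3(\psi)\big(\epsilon y^{1-\delta}+y^\delta+\epsilon^{-\kappa}y^{1-s_1}\big)\Big).$$
The final step is to balance the two competing $\epsilon$-dependent errors: $\epsilon y^{1-\delta}$ wants $\epsilon$ small, while $\epsilon^{-\kappa}y^{1-s_1}$ wants $\epsilon$ large, so one sets $\epsilon\asymp y^{\beta}$ for the exponent $\beta$ that equalizes $1-\delta+\beta$ and $1-s_1-\kappa\beta$, i.e. $\beta=(\delta-s_1)/(1+\kappa)$, giving a gain of $y^{\beta}$ over the main term. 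Since $\mathbf s_\G$ can be taken just below $\delta-s_1$, and with the paper's normalization $\kappa$ of the Sobolev cost of $\rho_{\eta,\epsilon}$ working out to $3/2$, this $\beta$ becomes $\tfrac{2}{5}(\delta-s_1)$, which is $\ge\tfrac{2\mathbf s_\G}{5}$; the term $y^\delta$ is negligible since $\delta>1-\delta+\tfrac{2\mathbf s_\G}{5}$ for $\delta>1/2$. \textbf{The main obstacle} I anticipate is bookkeeping the Sobolev cost $\mathcal S_1(\rho_{\eta,\epsilon})$ of the thickening function precisely enough to land the exponent $\tfrac{2\mathbf s_\G}{5}$ — the structure of the argument is routine "thickening + spectral expansion + optimize," but the exact power $\epsilon^{-\kappa}$ (and hence the fraction $2/5$) hinges on how many derivatives in the $A$-direction one pays for $r_\epsilon$ and how the factor $\ell$ from Proposition \ref{l:approx_matrix_coeff} interacts with the decay of $\langle\psi,\phi_\ell\rangle$, so I would be careful to track those constants rather than absorb them.
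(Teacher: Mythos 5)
Your overall scaffolding (thicken with $\rho_{\eta,\epsilon}$, expand via Lemma~\ref{l:matrix_coeff}, feed in Proposition~\ref{l:approx_matrix_coeff}, balance $\epsilon y^{1-\delta}$ against $\epsilon^{-3/2}y^{1-s_1}$, getting $\epsilon=y^{2(\delta-s_1)/5}$) matches the paper, and your handling of the $\ell$-dependence via Lemma~\ref{haha} is correct. But there is a genuine gap in the very first step, where you assert
$$\langle a_y\psi,\rho_{\eta,\epsilon}\rangle=\psi^N(a_y)+O\bigl(\epsilon\,\mathcal S_3(\psi)\,y^{1-\delta}\bigr)+\cdots.$$
The thickening error, as in \eqref{l:aprox_matrix_coeff}, is actually $(\epsilon+y)\,I_\eta(\widehat\psi)(a_y)$ with $\widehat\psi$ from Lemma~\ref{l:hat_psi}; your claim that this is $\ll\epsilon\,\mathcal S_3(\psi)\,y^{1-\delta}$ amounts to asserting $\widehat\psi^N(a_y)\ll\mathcal S_3(\psi)\,y^{1-\delta}$, which is precisely the type of bound Theorem~\ref{m1} is trying to establish. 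It is not a consequence of ``$\phi_0^N(a_y)\asymp y^{1-\delta}$ controls the horocycle mass''; that gives information about the weighted Patterson--Sullivan mass along the horocycle, not about the Lebesgue measure of $\{x\in J:\Gamma n_xa_y\in\operatorname{supp}\widehat\psi\}$. A crude a priori estimate only gives $I_\eta(\widehat\psi)(a_y)\ll\|\widehat\psi\|_\infty\cdot|J|=O(1)$, which is far weaker than $y^{1-\delta}$.

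The paper closes this gap by a bootstrap: set $\psi_0=\psi$ and $\psi_{j}=\widehat{\psi_{j-1}}$, and iterate the approximation inequality $k$ times, where $k>\tfrac{5(1-\delta)}{2(\delta-s_1)}+1$. Each $\langle a_y\psi_j,\rho_{\eta,\epsilon}\rangle$ is estimated by the spectral expansion (giving $\mathcal S_3(\psi)\,O(y^{1-\delta}+\epsilon^{-3/2}y^{1-s_1})$ uniformly in $j$, using $\mathcal S_m(\psi_j)\ll\mathcal S_3(\psi)$), and the accumulated raw error is $(\epsilon+y)^k\,I_\eta(\psi_k)(a_y)\ll(\epsilon+y)^k\mathcal S_1(\psi_k)$, which with $\epsilon=y^{2(\delta-s_1)/5}$ and the choice of $k$ is already $O(y^{1-\delta+2(\delta-s_1)/5})$ — no circular bound on $\widehat\psi^N(a_y)$ is needed. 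You should replace your single-step estimate by this $k$-step iteration; the rest of your argument then goes through unchanged.
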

\begin{proof}  
By Lemma \ref{jjj}, there exists a bounded open subset $J$ such that
$\psi(n_xa_y)=0$ for all $x\notin J$ and all $0<y<1$.  
When $\infty\notin \Lambda(\G)$, we will assume that $J$ contains $\Lambda(\G)$, by enlarging $J$ if necessary,
and otherwise $J=(-x_0, x_0)$.

Choose a non-negative function $\eta\in C_c^\infty (N\cap \G \ba N)$ such that $\eta|_J=1$. Then
	$$I_\eta(\psi)(a_y): = \int_{(N\cap \Gamma)\bsl N} \psi(n_xa_y) \eta(n_x)\; dx =
  \psi^N(a_y)\;.$$

Let $\e_0, W_\e, r_\e,$ and $ \rho_{\eta, \e}$ be as defined in section \ref{w}
with respect to this $J$ and $\eta$. 
Since $r_\e$ is the approximation of the identity in the $A$ direction, $\mathcal S_1(\rho_{\eta, \e})=O_\eta(\e^{-3/2})$.
 For any $0< y< 1$, and any small $\epsilon>0$, we have (see the proof of \cite[Prop. 6.6]{KO2})
	\begin{equation}\label{l:aprox_matrix_coeff} \left|I_\eta(\psi)(a_y)-\left<a_y\psi, \rho_{\eta, \epsilon}\right>\right| \ll (\epsilon+y)\cdot I_\eta(\widehat \psi)(a_y).\end{equation}

Fix $1/2<s_1<\delta$ as in Lemma \ref{l:matrix_coeff}. Let $k$ be an integer bigger than $\frac{5(1-\delta)}{2(\delta-s_1)}+1 $. 
Setting $\psi_0(g) := \psi(g)$, we define for $1\leq j\leq k$, inductively
	$$\psi_j(g) :=\widehat\psi_{j-1}(g)$$
where $\widehat\psi_{j-1}$ is given by Lemma \ref{l:hat_psi}. 
Applying Lemma \ref{l:aprox_matrix_coeff} to each $\psi_j$, we obtain for $0\leq j\leq k-1$, 
	\begin{align*}
	I_{\eta}(\psi_j)(a_y) &= \left<a_y\psi_j, \rho_{\eta, \epsilon}\right> + 
O\left((\epsilon+y)\cdot I_\eta(\widehat\psi_j)(a_y)\right)
	\\&=\left<a_y\psi_j, \rho_{\eta, \epsilon}\right> + O\left((\epsilon+y)\cdot I_\eta(\psi_{j+1})(a_y)\right)
	\end{align*} and
	$$I_\eta(\psi_j)(a_y) = \left<a_y \psi_j, \rho_{\eta, \epsilon}\right> +
O_\eta\left((\epsilon+y)\mathcal S_{1}(\psi_k)\right).$$
 
Note $$|\la \psi_j, \phi_{\ell}\ra |=(|\ell|+1)^{-3}O(\mathcal S_3(\psi))$$ by Lemmas \ref{haha} and \ref{l:hat_psi}.

Since $\left<a_y\phi_{\ell}, \rho_{\eta, \epsilon}\right> =O(\ell y^{1-\delta})$ by Proposition \ref{l:approx_matrix_coeff}, we deduce
$$\sum_{\ell \in \Z} |\left<\psi_j, \phi_{\ell}\right>\left<a_y\phi_{\ell}, \rho_{\eta, \epsilon}\right> |
= \sum_{\ell \in \Z}(|\ell|+1)^{-2}   y^{1-\delta} O(\mathcal S_3(\psi)) 
=   y^{1-\delta} O(\mathcal S_3(\psi)) .$$
Hence
by Lemma \ref{l:matrix_coeff}, we deduce that for each $1\leq j \leq k-1$, 
	\begin{align*}
	\left<a_y\psi_j, \rho_{\eta, \epsilon}\right> &=
\sum_{\ell \in \Z} \left<\psi_j, \phi_{\ell}\right>\left<a_y\phi_{\ell}, \rho_{\eta, \epsilon}\right> +
 O\left(y^{1-s_1}\cdot\mathcal S_1(\psi_j)\cdot\mathcal S_1(\rho_{\eta, \epsilon})\right)
	\\&= O (\mathcal S_3(\psi) \cdot y^{1-\delta} ) +O(y^{1-s_1}\cdot\mathcal S_1(\psi_j)\cdot\mathcal S_1(\rho_{\eta, \epsilon}) )
	\\&=\mathcal S_3(\psi)  \cdot O( y^{1-\delta} +\epsilon^{-3/2}y^{1-s_1}).
	\end{align*}

Hence for any $0<y<\epsilon$, using Proposition \ref{l:approx_matrix_coeff}, we deduce
	\begin{align*}
	&I_\eta(\psi)(a_y) = \left<a_y\psi, \rho_{\eta, \epsilon}\right> +
\sum_{j=1}^{k-1}O\left(\left<a_y\psi_j, \rho_{\eta, \epsilon}\right>(\epsilon+y)^j\right) +
 O_\psi ((\epsilon+y)^k)
\\&=\left<a_y\psi, \rho_{\eta, \epsilon}\right> 
 + O(\epsilon\cdot y^{1-\delta} + \epsilon^{-3/2} y^{1-s_1} +\epsilon^k)
	\\&=\sum_{\ell \in \Z} \left<\psi, \phi_{\ell}\right>\left<a_y\phi_{\ell},
 \rho_{\eta, \epsilon}\right> +
 O(\mathcal S_3(\psi)  (\epsilon\cdot y^{1-\delta} + \epsilon^{-3/2} y^{1-s_1} +\epsilon^k))
 \\&= \sum_{\ell \in \Z} \left<\psi, \phi_{\ell}\right> c_\ell y^{1-\delta} +
\mathcal S_3(\psi) O(y^\delta+\epsilon\cdot y^{1-\delta} + \epsilon^{-3/2} y^{1-s_1} +\epsilon^k).
\end{align*}

By equating $\epsilon\cdot y^{1-\delta} $ and $ \epsilon^{-3/2} y^{1-s_1}$
we put $\e=y^{2(\delta-s_1)/5}$ and obtain
$$I_\eta(\psi)(a_y) =\sum_{\ell \in \Z} c_\ell \left<\psi, \phi_{\ell}\right>  y^{1-\delta}
+\mathcal S_3(\psi) O(y^{1-\delta +\frac{2(\delta-s_1)}{5}}).$$
\end{proof}

\begin{rmk}
 \rm
Suppose that $\psi\in C_c^\infty(\G\ba G)$ is a real valued function. Since $c_\ell =c_{-\ell}$
and $\phi_{-\ell}=\overline{\phi_{\ell}}$ for each $\ell \in \z$,
we have
$$\sum_{\ell\in \z} c_{\ell}\la \psi, \phi_\ell\ra\in \br $$
as expected.
 \end{rmk}

\section{Comparison of main terms and Burger-Roblin measure
as a distribution}
Recall the Patterson measure $\nu_i=\nu_i^\G$ on the boundary and $\phi_0=\phi_0^\G$ given by
$$\phi_0(x+iy) = \int_{\br} \left(\frac{(u^2+1)y }{(x-u)^2 +y^2}\right)^\delta d\nu_i(u)$$ from section 1.
 Note that $$\phi_0^\G(e)=|\nu_i^\G|.$$
As before, we normalize $\nu_i$ so that $\|\phi_0\|_2=1$. 

For $\xi\in \partial(\bH^2)$ and $z_1, z_2\in \bH^2$,
recall the Busemann function:
$$\beta_\xi(z_1,z_2)=\lim_{s\to \infty}d(z_1,\xi_s)-d(z_2,\xi_s)$$
where $\xi_s$ is a geodesic ray tending to $\xi$ as $s\to \infty$.

Using the identification of $\T^1(\bH^2)$ and $G$,
we give the definition of 
the Bowen-Margulis-Sullivan measure $m^{\BMS}$  on $\Gamma\ba G$.
For $u\in \T^1(\bH^2)$, we denote by $u^+$ and $u^-$ the forward and the backward endpoints
of the geodesic determined by $u$, respectively.
The correspondence $$u\mapsto (u^+, u^-, t:=\beta_{u^-}(i, \pi(u)))$$ gives a homeomorphism between
the space $\T^1(\bH^2)$ with  
 $(\partial(\bH^2)\times \partial(\bH^2) - \{(\xi,\xi):\xi\in \partial(\bH^2)\})  \times \br $
where $\pi: G\to G/K=\bH^2$ is the canonical projection.
Define the measure $\tilde m^{\BMS}$ on $G$:
$$d \tilde m^{\BMS}(u)=e^{\delta \beta_{u^+}(i, \pi(u))}\;
 e^{\delta \beta_{u^-}(i,\pi(u)) }\;
d\nu_i(u^+) d\nu_i(u^-) dt .$$
This measure is left $\G$-invariant and hence induces a measure $m^{\BMS}$ on $\G\ba G$.
As $\G$ is finitely generated, we have $|m^{\BMS}|<\infty$.

Roblin obtained the following in his thesis \cite{RoT}:
\begin{thm}[Roblin] \label{rot} For $\delta >1/2$,
 $$\|\phi_0\|_2^2=|m^{\BMS}|\cdot\int_{\br} \frac{dx}{(1+x^2)^\delta}$$
\end{thm}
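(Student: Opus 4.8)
The plan is to compute the Bowen–Margulis–Sullivan measure $|m^{\BMS}|$ by writing it out in the Iwasawa-type coordinates adapted to the horocyclic foliation, and to recognize the resulting integral as essentially $\phi_0^N(a_y)/y^{1-\delta}$ paired against the complementary transverse directions, thereby producing the normalization constant $c_0$ from Proposition \ref{co}. More precisely, I would first observe that the closed horocycle $(N\cap\G)\ba N$ carries the conditional measure of $\tilde m^{\BMS}$ along the unstable (or stable) leaves in a natural way: in the coordinates $(u^+,u^-,t)$ the $N$-action fixes $u^-=\infty$ and varies $u^+$, so the BMS density along such a leaf is $e^{\delta\beta_{u^+}(i,\pi(n_x))}\,e^{\delta\beta_\infty(i,\pi(n_x))}\,d\nu_i(u^+)$. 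A direct computation of the two Busemann cocycles for $g=n_xa_y$ (the first gives the Poisson-type factor $\big(\tfrac{(u^2+1)y}{(x-u)^2+y^2}\big)^\delta$, exactly as in \eqref{vp1}, and the second gives a factor $y^{-\delta}$ times a constant) is the technical heart of the identification.

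Next I would combine this with Proposition \ref{co}: since $\phi_0^N(a_y)=c_0 y^{1-\delta}$ and $\phi_0(x+iy)$ is the Poisson integral of $\nu_i$, integrating the BMS density over one period of the horocycle and over the remaining transverse coordinate yields $|m^{\BMS}|$ up to an explicit scalar built from $c_0$ and the Patterson mass. On the other hand, I would compute $\|\phi_0\|_2^2$ by unfolding: write $\|\phi_0\|_2^2=\langle\phi_0,\phi_0\rangle$ and use the explicit Poisson-integral formula together with the $\G$-equivariance of $\nu_i$ to unfold one of the two $\nu_i$-integrals against a fundamental domain, turning the $L^2$-norm into $\int_{\Lambda(\G)}\phi_0(\xi)\,d\nu_i(\xi)$-type expression; the $x$-integral over $\br$ then produces the factor $\int_\br \frac{dx}{(1+x^2)^\delta}=\frac{\sqrt\pi\,\Gamma(\delta-\tfrac12)}{\Gamma(\delta)}$ seen already in the proof of Proposition \ref{co}. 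Matching the two computations — $\|\phi_0\|_2^2$ via unfolding and $|m^{\BMS}|$ via the BMS coordinate description — gives the stated identity after the constant $c_0$ cancels.

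The main obstacle I anticipate is bookkeeping the two Busemann cocycles and the normalizations consistently: one must make sure the measure $dt$ in $\tilde m^{\BMS}$, the Haar measure $dg=y^{-2}dx\,dy\,dk$, and the transverse measure $\nu$ on $AN^-$ from section \ref{w} are all calibrated so that no spurious factor of $2$, of $x_0$, or of $\Vol(K)$ is lost; the fact that the final answer is independent of $x_0$ (the period) is a useful consistency check, and indeed the case $N\cap\G=\{e\}$, where $x_0=\infty$, should be handled by the same leafwise-measure argument with $\Lambda(\G)$ bounded, exactly as in Proposition \ref{co}. A secondary subtlety is the convergence of the unfolded integral $\int_{\Lambda(\G)}\phi_0\,d\nu_i$, which is finite precisely because $\|\phi_0\|_2<\infty$ and $\delta>\tfrac12$; since we are assuming $\G$ is finitely generated this is already guaranteed, and in fact $|m^{\BMS}|<\infty$ as noted. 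Once the coordinate identifications are pinned down, the remaining steps are the routine Gamma-function and elementary-integral computations already appearing in Proposition \ref{co} and Lemma \ref{beta}.
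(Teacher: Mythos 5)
The paper does not prove this theorem; it is quoted from Roblin's thesis \cite{RoT}, so there is no proof in the paper against which to compare. Judged on its own terms, though, your proposal has a genuine gap in its first half.

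You propose to recover $|m^{\BMS}|$ by ``integrating the BMS density over one period of the horocycle and over the remaining transverse coordinate,'' and to recognize the result as a multiple of $c_0 = \phi_0^N(a_y)/y^{1-\delta}$. This cannot work at the level of the measures involved. The BMS conditional along the horocycle $(N\cap\G)\ba N$ is the Patterson--Sullivan measure $\mu^{\PS}_N$, which is a $\nu_i$-integral; the transversal in the $AN^-$ direction again carries a PS-type density (a $\nu_i$-factor in $N^-$, Lebesgue in $A$). So $|m^{\BMS}|$ decomposes as (PS mass on $N$) $\times$ (PS-type mass on $AN^-$), a $\nu_i\times\nu_i\times\text{Leb}$ object. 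By contrast, $c_0 = \phi_0^N(a_y)/y^{1-\delta}$ is built from the Poisson integral against $\nu_i$ paired with \emph{Lebesgue} measure $dx$ on $N$; in the case $N\cap\G=\{e\}$ one computes explicitly $c_0 = \big(\int(u^2+1)^\delta d\nu_i(u)\big)\cdot\int_\br\frac{dx}{(1+x^2)^\delta}$, i.e.\ $c_0 = \mu^{\PS}_N((N\cap\G)\ba N)\cdot\int_\br\frac{dx}{(1+x^2)^\delta}$. The identity $c_0 = \mu^{\PS}_N/|m^{\BMS}|$ is therefore \emph{equivalent} to the statement you are trying to prove (once $\|\phi_0\|_2=1$), not a stepping stone toward it; your step (1) is implicitly assuming the conclusion.

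A route that does work, and is essentially what Roblin's argument amounts to, does not pass through the horocycle at all. Write $\phi_0(z)=\int e^{\delta\beta_u(i,z)}d\nu_i(u)$ and expand $\|\phi_0\|_2^2=\int_{\F}\int\int e^{\delta\beta_\xi(i,z)+\delta\beta_\eta(i,z)}\,d\nu_i(\xi)d\nu_i(\eta)\frac{dx\,dy}{y^2}$ over a fundamental domain $\F$ for $\G$ in $\bH^2$. The cocycle identity $\beta_\xi(i,z)+\beta_\eta(i,z)=2(\xi|\eta)_i-2(\xi|\eta)_z$, together with Fermi coordinates $(w,s)$ for $z$ relative to the geodesic $[\xi,\eta]$ (so $\frac{dxdy}{y^2}=\cosh s\,dw\,ds$ and $(\xi|\eta)_z=\log\cosh s$), factors the integral as $\big(\int_{\F'} e^{2\delta(\xi|\eta)_i}d\nu_i(\xi)d\nu_i(\eta)\,dw\big)\cdot\int_\br \cosh^{1-2\delta}s\,ds$. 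The first factor is exactly $|m^{\BMS}|$ (with $w=t$ the geodesic flow parameter), and the substitution $x=\sinh s$ turns the second into $\int_\br\frac{dx}{(1+x^2)^\delta}$; convergence of the $s$-integral is precisely where $\delta>\tfrac12$ enters. In this correct argument, the factor $\int_\br\frac{dx}{(1+x^2)^\delta}$ arises from the perpendicular distance to the \emph{geodesic} direction, not from the horocyclic integral of Proposition \ref{co}, and $c_0$ plays no role. Your step (2), the ``unfolding'' claim that $\|\phi_0\|_2^2$ reduces to ``$\int_{\Lambda(\G)}\phi_0\,d\nu_i$,'' also does not type-check ($\phi_0$ lives on $\bH^2$, not on the boundary) and should be replaced by the coordinate change above.
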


As we have normalized $\nu_i$ so that $\|\phi_0\|_2=1$ and $\phi_0(i)=|\nu_i|$, we deduce
$\tfrac{1}{|m^{\BMS}|}=\int_{\br} \frac{dx}{(1+x^2)^\delta}$.
As $\delta>\tfrac 12$, we have $$\tfrac{\sqrt \pi \Gamma(\delta-\tfrac 12)}{\Gamma(\delta)}=
\int_{-\infty}^\infty  \tfrac{{dx}}{(1+x^2)^{\delta}}.$$

To describe the equidistribution result of $N\cap \G\ba N a_y$ from \cite{Ro}, we recall
 $m^{\BR}_N$ and $\mu^{\PS}_N$.
First, define the measure $\tilde m^{\BR}_{N}$ on $ G$ as follows :
for $\psi\in C_c( G)$,
$$d \tilde m^{\BR}_N(u)=e^{\delta \beta_{u^+}(i, \pi(u))}\;
 e^{ \beta_{u^-}(i,\pi(u)) }\;
d\nu_i(u^+) dm_i(u^-) dt $$
where $m_i$ is the $K$-invariant probability measure on $\partial(\bH^2)$.

This measure is left $\G$-invariant and right $N$-invariant,
and the Burger-Roblin measure $m^{\BR}_{N}$ (associated to the stable horocyclic subgroup $N$) 
is the measure on $X$ induced from $\tilde m^{\BR}_N$.

Consider the measure $\mu^{\PS}_N$ on $N$ given by
 $$d\mu^{\PS}_N(n_x)=e^{-\delta\beta_{x} (i, x+i)} d\nu_i(x)=(1+x^2)^{\delta} d\nu_i(x).$$ 
This induces a measure on $(N\cap \G)\ba N$ for which we use the same notation. 
Since $\mu^{\PS}_N$ is supported in $(N\cap \G)\ba (\Lambda(\G)-\{\infty\})$, we have
 $ \mu^{\PS}_N ((N\cap \Gamma) \ba N)<\infty$.

\begin{thm} (\cite{Ro}, see also \cite{OS})\label{ros} Let $\delta>0$ and $(N\cap \G)\ba N$ be closed.
 For any $\psi\in C_c(\Gamma\ba G)$, 
$$\lim_{y\to 0} y^{\delta-1} \psi^N(a_y) = \frac{\mu^{\PS}_N (N\cap \Gamma \ba N)}{|m^{\BMS}|} m^{\BR}_N(\psi)  .$$
\end{thm}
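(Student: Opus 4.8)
The plan is to deduce Theorem~\ref{ros} from the effective statement of Theorem~\ref{main3}, which is available in our range $1/2<\delta<1$; for $0<\delta\le1/2$ the statement is Roblin's theorem~\cite{Ro} and lies outside the scope of our spectral methods. First I would handle a smooth test function $\psi\in C_c^\infty(\G\ba G)$. By Theorem~\ref{main3},
$$\psi^N(a_y)=\kappa_\G\cdot m^{\BR}_N(\psi)\cdot y^{1-\delta}+O\!\left(\mathcal S_3(\psi)\,y^{(1-\delta)+\tfrac{2{\s}_\G}{5}}\right),$$
so multiplying by $y^{\delta-1}$ and letting $y\to0$ makes the error term vanish (since $\tfrac{2{\s}_\G}{5}>0$) and gives $\lim_{y\to0}y^{\delta-1}\psi^N(a_y)=\kappa_\G\,m^{\BR}_N(\psi)$.

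Next I would verify that $\kappa_\G=\mu^{\PS}_N(N\cap\G\ba N)/|m^{\BMS}|$, so that the constant matches. Unwinding the definition of $\mu^{\PS}_N$ gives $\mu^{\PS}_N(N\cap\G\ba N)=\int_{(N\cap\G)\ba N}(1+x^2)^{\delta}\,d\nu_i(x)$, while Roblin's Theorem~\ref{rot} combined with the normalization $\|\phi_0\|_2=1$ yields $|m^{\BMS}|^{-1}=\int_{\br}(1+x^2)^{-\delta}\,dx=\tfrac{\sqrt\pi\,\Gamma(\delta-\tfrac12)}{\Gamma(\delta)}$; comparing with the explicit value of $\kappa_\G$ recorded in Theorem~\ref{main3} gives the identity directly. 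This settles the smooth case.

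To pass to a general $\psi\in C_c(\G\ba G)$, I would use monotonicity: for each fixed $y$ the functional $\psi\mapsto\psi^N(a_y)=\int_{(N\cap\G)\ba N}\psi(n_xa_y)\,dx$ respects the pointwise order, so $\psi_1\le\psi\le\psi_2$ implies $\psi_1^N(a_y)\le\psi^N(a_y)\le\psi_2^N(a_y)$. Fixing a relatively compact open set $Q'\supset\mathrm{supp}(\psi)$ and using uniform continuity of $\psi$, mollification produces for each $\eta>0$ functions $\psi_1,\psi_2\in C_c^\infty(\G\ba G)$ supported in $Q'$ with $\psi_1\le\psi\le\psi_2$ and $\|\psi_2-\psi_1\|_\infty<\eta$; since $m^{\BR}_N$ is a Radon measure this forces $m^{\BR}_N(\psi_2-\psi_1)\le\eta\,m^{\BR}_N(Q')$. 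Applying the smooth case to $\psi_1$ and $\psi_2$, squeezing $\liminf$ and $\limsup$, and then letting $\eta\to0$ gives $\lim_{y\to0}y^{\delta-1}\psi^N(a_y)=\kappa_\G\,m^{\BR}_N(\psi)$. Finiteness of $\psi^N(a_y)$ for each $y$ poses no issue: when $N\cap\G\ne\{e\}$ the domain $(N\cap\G)\ba N$ is compact, and when $\infty\notin\Lambda(\G)$, Lemma~\ref{jjj} confines the support of $x\mapsto\psi(n_xa_y)$ to a fixed bounded interval.

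The only step requiring genuine care is the approximation: the error term in Theorem~\ref{main3} is governed by $\mathcal S_3(\psi)$, which blows up under mollification, so one cannot approximate uniformly in $y$ and must instead exploit the sign structure through the squeezing argument above --- the rest is bookkeeping (local finiteness of $m^{\BR}_N$, keeping supports in a fixed compact set). I would also remark that the hypothesis $\delta>1/2$ is what makes the whole argument run, since it is exactly the condition guaranteeing the $L^2$-eigenfunction $\phi_0$ and hence the spectral input behind Theorem~\ref{main3}.
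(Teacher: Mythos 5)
Theorem~\ref{ros} is not proved in the paper: it is an external input quoted from Roblin~\cite{Ro} (and \cite{OS}). Your proposal treats Theorem~\ref{main3} as an independent starting point and tries to recover Theorem~\ref{ros} from it, but this runs the paper's logic backwards and is circular. In the paper the only result proved by spectral methods is Theorem~\ref{m1} (equivalently Theorem~\ref{main2}), whose main term is the intrinsically spectral quantity $\sum_{\ell} c_\ell \langle \psi, \phi_\ell\rangle\,y^{1-\delta}$. The identification of this sum with $\kappa_\G\, m^{\BR}_N(\psi)$ --- that is, Theorem~\ref{brd} --- is obtained precisely by comparing the main terms of Theorem~\ref{m1} with Roblin's Theorem~\ref{ros} (and using Theorem~\ref{rot} to convert constants). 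Theorem~\ref{main3} then follows from Theorem~\ref{main2} together with Theorem~\ref{brd}. So the very statement you invoke in your first line, ``$\psi^N(a_y)=\kappa_\G\, m^{\BR}_N(\psi)\, y^{1-\delta}+O(\cdots)$,'' already has Theorem~\ref{ros} baked in. To make your route non-circular you would need an independent proof that $\sum_{\ell} c_\ell \langle\psi,\phi_\ell\rangle = \kappa_\G\, m^{\BR}_N(\psi)$ directly from the explicit formulas for $\phi_{\pm\ell}$ and the geometry of $m^{\BR}_N$, which the paper does not attempt.

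A second, independent issue: Theorem~\ref{ros} is stated for all $\delta>0$, whereas the spectral apparatus behind Theorems~\ref{main2} and~\ref{main3} requires $\delta>1/2$ (the $L^2$-eigenvalue below $1/4$). You do flag this, but conceding the case $0<\delta\le 1/2$ to ``Roblin's theorem'' is conceding the entire statement being proved. The constant verification $\kappa_\G=\mu^{\PS}_N(N\cap\G\backslash N)/|m^{\BMS}|$ and the squeezing argument from $C_c^\infty$ to $C_c$ are fine as far as they go, but they are dressing on an argument whose core step is not available. The correct reading here is that Theorem~\ref{ros} is a cited input (Roblin's geometric/measure-theoretic equidistribution theorem, proved by entirely different, non-spectral means), not a consequence of the paper's main theorems.
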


Comparing the main terms of Theorem \ref{ros} and Theorem \ref{m1} and using Theorem \ref{rot}, we deduce
the following interesting identity of the Burger-Roblin measure considered
as a distribution on $\G\ba G$: 
\begin{thm}\label{brd} Let $\delta>1/2$ and $(N\cap \G)\ba N$ be closed. For any $\psi\in C_c^\infty(\Gamma\ba G)$,
 $$\kappa_{\G} \cdot m^{\BR}_N(\psi) =\sum_{\ell \in \Z}  c_\ell\left<\psi, \phi_{\ell}\right>$$
where $\kappa_{\G}=\tfrac{\sqrt \pi \Gamma(\delta-\tfrac 12)}{\Gamma(\delta)}
 \int_{(N\cap \G)\ba N} {(1+x^2)^{\delta}} {{d\nu_i(x)}}.$
\end{thm}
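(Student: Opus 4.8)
The plan is to prove Theorem \ref{brd} by simply equating the two independent computations of the leading-order asymptotics of $\psi^N(a_y)$ as $y\to 0$: the effective version from Theorem \ref{m1} and Roblin's non-effective version from Theorem \ref{ros}. First I would observe that Theorem \ref{m1} gives
$$
\psi^N(a_y) = \Bigl(\sum_{\ell\in\Z} c_\ell\langle\psi,\phi_\ell\rangle\Bigr) y^{1-\delta} + O\bigl(\mathcal S_3(\psi)\, y^{1-\delta+\frac{2\s_\G}{5}}\bigr),
$$
so that, since $\s_\G>0$, we get $\lim_{y\to 0} y^{\delta-1}\psi^N(a_y) = \sum_{\ell\in\Z} c_\ell\langle\psi,\phi_\ell\rangle$ (the sum converges absolutely by Lemma \ref{haha}). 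On the other hand Theorem \ref{ros} gives the same limit as $\frac{\mu^{\PS}_N((N\cap\G)\ba N)}{|m^{\BMS}|}\, m^{\BR}_N(\psi)$. Equating the two limits for every $\psi\in C_c^\infty(\G\ba G)$ yields
$$
\sum_{\ell\in\Z} c_\ell\langle\psi,\phi_\ell\rangle = \frac{\mu^{\PS}_N((N\cap\G)\ba N)}{|m^{\BMS}|}\, m^{\BR}_N(\psi).
$$

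The remaining step is to identify the constant $\frac{\mu^{\PS}_N((N\cap\G)\ba N)}{|m^{\BMS}|}$ with $\kappa_\G$. By the definition of $\mu^{\PS}_N$, we have $\mu^{\PS}_N((N\cap\G)\ba N) = \int_{(N\cap\G)\ba N}(1+x^2)^\delta\, d\nu_i(x)$. By Roblin's identity (Theorem \ref{rot}) together with the normalization $\|\phi_0\|_2=1$, we have $\frac{1}{|m^{\BMS}|} = \int_\br \frac{dx}{(1+x^2)^\delta} = \frac{\sqrt\pi\,\Gamma(\delta-\tfrac12)}{\Gamma(\delta)}$, where the last equality is the standard Beta-integral evaluation recorded just before Theorem \ref{ros}. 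Multiplying these gives exactly
$$
\frac{\mu^{\PS}_N((N\cap\G)\ba N)}{|m^{\BMS}|} = \frac{\sqrt\pi\,\Gamma(\delta-\tfrac12)}{\Gamma(\delta)}\int_{(N\cap\G)\ba N}(1+x^2)^\delta\, d\nu_i(x) = \kappa_\G,
$$
which completes the proof.

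There is essentially no obstacle here: the theorem is a formal consequence of comparing two asymptotic expansions, and all analytic work has already been done in Theorems \ref{m1}, \ref{ros}, and \ref{rot}. The only points requiring minimal care are (i) that the error term in Theorem \ref{m1} is genuinely $o(y^{1-\delta})$, which holds because $\s_\G>0$; (ii) that $C_c^\infty(\G\ba G)$ is rich enough that an equality of the two distributions on this class of test functions is the statement we want (it is, by definition of $m^{\BR}_N$ as a distribution); and (iii) bookkeeping the Gamma-function constant, which is immediate from the displayed Beta-integral identity. I would present the argument in exactly the three moves above — extract the limit from Theorem \ref{m1}, extract it from Theorem \ref{ros}, and collapse the constant using Theorem \ref{rot} — and no step should take more than a couple of lines.
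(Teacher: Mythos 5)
Your proposal is correct and is essentially identical to the paper's own (very terse) derivation: the paper states the theorem immediately after the sentence ``Comparing the main terms of Theorem \ref{ros} and Theorem \ref{m1} and using Theorem \ref{rot}, we deduce $\ldots$'', which is exactly the three-step comparison you carried out, including the identification of the constant via $\tfrac{1}{|m^{\BMS}|}=\int_\br\tfrac{dx}{(1+x^2)^\delta}=\tfrac{\sqrt\pi\,\Gamma(\delta-\tfrac12)}{\Gamma(\delta)}$.
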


Hence Theorem \ref{main3} follows from
Theorems \ref{main2} and \ref{brd}.

\section{Application to counting in sectors}
\subsection{}
Let $Q$ be a ternary indefinite quadratic form  and $v_0\in \br^3$ be any vector
such that $Q(v_0)=0$. 
 Let $\G_0 < \SO_Q^\circ(\br)$ be a finitely generated discrete subgroup
with $\delta>1/2$. Suppose that $v_0\G_0$ is discrete.

 Let $\|\cdot \|$ be {\it any} norm in $\br^3$. We can find a representation $\iota: G=\PSL_2(\br)\to \SO_Q(\br)$ so that
the stabilizer of $v_0$ in $\PSL_2(\br)$ via $\iota$ is
the upper triangular subgroup $N$. Let $\Omega\subset K$ be a Borel subset
 and consider the sector
$$S_T(\Omega):=\{ v\in v_0. A\Omega :\|v\|<T\} $$
given by $\Omega$.

\begin{thm}\label{ec}
Suppose that
$\Omega$ has only finitely many connected components.
 Then $$\#\{v\in v_0\Gamma_0\cap S_T(\Omega)\}=
\frac{\kappa_{\iota^{-1}(\G_0)}}{\delta} 
\left(
\int_{k^{-1}\in \Omega} \frac{ d\nu_i(k(0))} {\|v_0k^{-1} \|^{\delta}}  \;
 \right) T^\delta +O(T^{\delta -\tfrac{4 {\s}_\G}{55}})) .$$
\end{thm}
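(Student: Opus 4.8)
The plan is to translate the counting problem on the cone into an equidistribution statement for the expanding horocycle $(N\cap\G_0)\ba N a_y$ on $\G_0\ba G_0$, and then apply Theorem~\ref{m1} (equivalently Theorem~\ref{main3}). First I would use the identification $v_0 G_0 \cong N\ba G_0$ coming from $\mathrm{Stab}_{G}(v_0)=N$ (via $\iota$), so that a point $v = v_0 g$ is recorded by the coset $Ng$; in Iwasawa-type coordinates $g = n_x a_y k$ the norm $\|v_0 g\| = \|v_0 a_y k\| = y^{-1/2}\|v_0 k\|$ depends only on $y$ and $k$ (using $v_0 n_x = v_0$ and $v_0 a_y = y^{-1/2}v_0$ with the convention $a_y = \mathrm{diag}(\sqrt y,\sqrt y^{-1})$). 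Thus $\|v\|<T$ together with $v\in v_0 A\Omega$ becomes the region $\{y > \|v_0 k\|^2/T^2,\ k\in\Omega\}$ in the $(y,k)$ coordinates, and the counting function $\#\{v\in v_0\G_0\cap S_T(\Omega)\}$ equals the number of $(N\cap\G_0)\ba N$-periods of the $\G_0$-orbit lying in this region, i.e.\ it is $\sum_{\gamma\in (N\cap\G_0)\ba\G_0}\mathbf 1_{S_T(\Omega)}(v_0\gamma)$.

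Next I would smooth this indicator. Choose $\psi\in C_c^\infty(\G_0\ba G_0)$ that, when unfolded, approximates the characteristic function of the region cut out in $\G_0\ba G_0$; concretely, for a small parameter the standard approach is to write $\#\{v\in v_0\G_0\cap S_T(\Omega)\}$ as an integral against a sum of bump functions along $N$, fiber the count over level sets $y = $ const, and on each fiber $\{n_x a_y k : n_x\in (N\cap\G_0)\ba N\}$ apply Theorem~\ref{m1} to get $\psi^N(a_y) = \sum_\ell c_\ell\langle\psi,\phi_\ell\rangle\, y^{1-\delta} + O(\S_3(\psi) y^{1-\delta + 2{\s}_\G/5})$. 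Using Theorem~\ref{brd} the main term is $\kappa_{\iota^{-1}(\G_0)} m^{\BR}_N(\psi)\, y^{1-\delta}$, and by the definition of $m^{\BR}_N$ the $\psi$'s I have chosen give $m^{\BR}_N(\psi) = \int_{k\in\Omega}(\text{test function in }y)\,d\nu_i(k(0))\,\cdot(\cdots)$. Integrating the main term $y^{1-\delta}$ over $y > \|v_0 k\|^2/T^2$ against $dy$ (the $A$-part of the Burger-Roblin measure carries $y^{\delta-1}\,dy/y$ in these coordinates, so the combined integrand is $y^{1-\delta}\cdot y^{\delta-1}\,dy/y = dy/y$ over a dyadic-type range, producing a power of $T$) yields exactly a term proportional to $\int_{k\in\Omega}\|v_0 k\|^{-\delta}\,d\nu_i(k(0))\cdot T^\delta$; tracking the constants $\kappa_{\iota^{-1}(\G_0)}$ and the factor $1/\delta$ from the $y$-integration gives the stated leading coefficient, with $\Omega$ replaced by $\Omega^{-1}$ or not according to how the $k \leftrightarrow k^{-1}$ bookkeeping in $v_0 k$ versus $k(0)$ works out (matching the formula in the statement).

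The final step is to optimize the smoothing parameter to get the error term. Approximating the sharp region by smooth $\psi_\pm$ from inside and outside introduces an error of size (boundary measure)$\times$(smoothing scale), which, since $\Omega$ has finitely many components so its boundary in $K$ is finite, is controlled by a power of the smoothing scale times $T^\delta$; meanwhile the Sobolev norms $\S_3(\psi_\pm)$ blow up like a negative power of the smoothing scale, and the equidistribution error $y^{1-\delta+2{\s}_\G/5}$ integrates against the region to a smaller power of $T$ times $\S_3(\psi_\pm)$. Balancing these competing powers — exactly as in \cite{KO1} — produces the exponent $\delta - 4{\s}_\G/55$. \textbf{The main obstacle} I anticipate is precisely this last bookkeeping: one must carefully control the dependence of $\S_3(\psi_\pm)$ on both the smoothing scale \emph{and} on $T$ (the region near $y\approx 1$ versus the cusp must be handled so that no extra $T$-dependence sneaks in), and verify that the $N$-equidistribution is applied uniformly over the whole range $y \in (\|v_0 k\|^2/T^2,\ O(1))$ rather than just $y\to 0$, possibly requiring the trivial bound $\psi^N(a_y) = O(1)$ for $y$ bounded away from $0$ from Corollary~\ref{ul}/Proposition~\ref{co}. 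The clean multiplicativity of $\|v_0 g\|$ in the $N\ba G_0$ coordinates is what makes the rest essentially a one-variable integration, so most of the work is in the error analysis, not the main term.
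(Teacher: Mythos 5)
Your overall strategy matches the paper's: unfold the counting problem via $\mathrm{Stab}_G(v_0)=N$, smooth the sharp region with an approximate identity $\Psi_\e$ on $\G\ba G$, apply Theorem~\ref{m1}/Theorem~\ref{brd} to the $N$-integrals on each $a_yk$-fiber, and optimize the smoothing scale $\e$ against the Sobolev blow-up $\S_3(\Psi_\e)\asymp\e^{-9/2}$ and the equidistribution error $y^{1-\delta+2\s_\G/5}$ to get $\e = T^{-4\s_\G/55}$. This is exactly what the paper does in Propositions~\ref{special} and~\ref{ftt}.

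However there is a bookkeeping error in your description of the $y$-integration that would give the wrong growth rate if taken literally. You write that the ``combined integrand is $y^{1-\delta}\cdot y^{\delta-1}\,dy/y = dy/y$, producing a power of $T$,'' but $\int dy/y$ over a range of length $\sim T^2$ gives $\log T$, not a power of $T$. The issue is that once you have applied Theorem~\ref{m1} to get $\int_{(N\cap\G)\ba N}\Psi_\e(n_xa_yk)\,dx \approx \kappa_\G\, m^{\BR}_N(\Psi_\e^k)\,y^{1-\delta}$, the Burger--Roblin measure has been ``consumed''; the remaining integral over $A\times K$ is against the \emph{Haar} measure of $G$, which in $NAK$-coordinates is $y^{-2}\,dx\,dy\,dk$. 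Thus the correct $y$-integral is $\int_{y>\|v_0k\|/T}y^{1-\delta}\cdot y^{-2}\,dy=\tfrac{1}{\delta}\,(T/\|v_0k\|)^\delta$, which is where both the $T^\delta$ and the $1/\delta$ come from. Your $y^{-1/2}$ normalization of $v_0a_y$ should also be $y^{-1}$ under the standard symmetric-square identification, so the region is $y>\|v_0k\|/T$, not $y>\|v_0k\|^2/T^2$.

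One more ingredient you should make explicit: the ``careful smoothing'' you flag as the main obstacle is handled in the paper by the strong wavefront lemma of Gorodnik--Oh--Shah, which gives $S_T(\Omega)U_{\ell_0\e}\subset S_{(1+\e)T}(\Omega_{\e+})$ and the reverse inclusion for $\Omega_{\e-}$, so that $\langle F_{(1-\e)T}^{\Omega_{\e-}},\Psi_\e\rangle \le F_T^\Omega(e)\le\langle F_{(1+\e)T}^{\Omega_{\e+}},\Psi_\e\rangle$. Combined with $\nu_i(\Omega_{\e+}^{-1}(0)-\Omega_{\e-}^{-1}(0))\le\e$ (which uses that $\nu_i$ is atom-free and $\Omega$ has finitely many components) this is what converts the perturbation in $\Omega$ and $T$ into the $O(\e T^\delta)$ term you balance against the spectral error.
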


Theorem \ref{ec} was obtained in \cite{OS} but without an error term.
Deducing this theorem from Theorem \ref{main2} is a verbatim repetition of
the arguments in the section 8 of \cite{LO}. For the sake of completeness, we give a brief sketch here.


Let $\Gamma:=\iota^{-1}(\Gamma_0)$, and
let $U_\e$ be an $\e$-neighborhood of $e$ in $G$.

By the assumption on $\Omega$, the boundary of
 $\Omega^{-1}(0)$ consists of finitely many points and hence $\nu_i(\partial(\Omega^{-1}(0)))=0$, as $\nu_i$ is atom-free.
Moreover for all sufficiently small $\e>0$,
 there exists an $\e$-neighborhood $K_\e$ of $e$ in $K$
  such that for $\Omega_{\e+}=\Omega K_\e$ and 
  $\Omega_{\e-}=\cap_{k\in K_\e} \Omega k$,
 \begin{equation}\label{eq:600}
 \nu_i(\Omega_{\e +}^{-1} (0)-  \Omega_{\e-}^{-1}(0))\le \e.
 \end{equation}

By the strong wave front lemma \cite[Theorem 4.1]{GOS},  there exists $0<\ell_{0}<1$ such that for $T\gg 1$,
$$S_T(\Omega) U_{\ell_0 \e} \subset S_{(1+\e)T}(\Omega_{\e+}) \quad\text{and}\quad
S_{(1-\e)T}(\Omega_{\e-})\subset\cap_{u\in U_{\ell_0 \e}} S_T(\Omega) u .$$

Let
 $\psi_\e \in C_c^\infty(G)$ be a non-negative function supported in $U_{\ell_0 \e}$ with integral one, and
set $\Psi_\e(g)=\sum_{\gamma\in \G} \psi_\e(\gamma g)$.

Define the counting function $F_T^{\Omega}$ on $\G\ba G$ by
$$F_T^{\Omega}(g)=\sum_{\gamma\in N\cap \Gamma\ba \Gamma} \chi_{S_T(\Omega)}(v_0 \gamma g).$$

Then 
\[  
\la F_{(1-\e)T}^{\Omega_{\e-}} ,\Psi_\e\ra \le F_{T}^{\Omega}(e)
\le\la  F_{(1+\e)T}^{\Omega_{\e+}},\Psi_\e\ra.
\]

Let $\Psi_\e^{k}(g):=\Psi_\e(gk)$. 

The following  is a special case of \cite[Prop. 6.2]{OS} (see also \cite[Sec. 7]{KO2}):
\begin{prop} \label{special} Suppose that
$\nu_i(\partial(\Omega^{-1}(0)))=0$.
 For all small $\e>0$,
$$ \int_{k \in \Omega}
 \frac{  m^{\BR}_N(\Psi_\e^{k})}{\|v_0k\|^\delta}
 dk =\int_{k\in \Omega^{-1}} \frac{ d \nu_i(k(0))}{\|v_0k^{-1}\|^\delta} (1+O(\e)). $$
\end{prop}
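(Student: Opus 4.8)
The plan is to compute both sides explicitly by unfolding the definition of $m^{BR}_N$ and the mollifiers $\Psi_\e$, and to show they converge to the same limiting integral as $\e\to 0$. First I would recall that $m^{BR}_N$ is defined via $\tilde m^{BR}_N$ on $G$ which, written in the $KAN$ coordinates (as in the formula $\tilde m^{BR}_N(\psi)=\int_{KAN}\psi(ka_yn_x)y^{\delta-1}\,dx\,dy\,d\nu_i(k(0))$ appearing in the introduction), makes the pairing $m^{BR}_N(\Psi_\e^k)$ into an integral over $K\times A\times N$ of $\Psi_\e(k'a_yn_xk)$ against $y^{\delta-1}\,dx\,dy\,d\nu_i(k'(0))$. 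Since $\Psi_\e(g)=\sum_{\gamma\in\Gamma}\psi_\e(\gamma g)$ with $\psi_\e$ supported in the tiny neighborhood $U_{\ell_0\e}$ and of total mass one, the right $N$-invariance of $m^{BR}_N$ lets me drop the $N$-integration and reduces the pairing, up to an error $O(\e)$ coming from the small support, to an evaluation of a Poisson-type kernel. Concretely, $m^{BR}_N(\Psi_\e^k)$ approximates $\phi_0$-type data at the point $k$, and I expect $m^{BR}_N(\Psi_\e^k)\to$ (a constant multiple of) $\|v_0 k\|^{\delta}\cdot(\text{density of }\nu_i\text{ pushed forward})$ as $\e\to 0$, in a weak sense against the $dk$-integration.

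Next I would carry out the change of variables that ties the $K$-integration against $\|v_0k\|^{-\delta}$ to the integration against $d\nu_i(k^{-1}(0))/\|v_0k^{-1}\|^{\delta}$. The key geometric fact is that $N$ is exactly the stabilizer of $v_0$ under $\iota$, so the map $k\mapsto v_0 k$ factors through $N\backslash G\cong$ (the cone minus a ray), and the boundary point associated to $k\in K$ is $k^{-1}(0)\in\partial(\bH^2)$; the norm factor $\|v_0 k\|^{\delta}$ transforms by the cocycle for the $G$-action on the cone, which is precisely the conformal density exponent that converts Lebesgue-type measure on $K$ into $\nu_i$. Since $\nu_i(\partial(\Omega^{-1}(0)))=0$ by hypothesis, the indicator of $\Omega$ is a $\nu_i$-continuity set, so I can pass to the limit $\e\to 0$ inside the $K$-integral by dominated convergence, absorbing the $O(\e)$ terms uniformly. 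This yields $\int_{k\in\Omega} m^{BR}_N(\Psi_\e^k)\|v_0k\|^{-\delta}\,dk = \int_{k\in\Omega^{-1}} d\nu_i(k(0))\|v_0k^{-1}\|^{-\delta}\,(1+O(\e))$.

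The main obstacle I anticipate is making the "up to $O(\e)$" bookkeeping genuinely uniform: I need the error in replacing $\Psi_\e^k$ by a point evaluation to be $O(\e)$ with a constant independent of $k$ ranging over the compact set $\Omega\subset K$, and I need the $\nu_i$-measure estimate \eqref{eq:600} together with the strong wave front lemma to control the discrepancy between $\Omega$, $\Omega_{\e+}$, $\Omega_{\e-}$. Handling the non-compactness of $(N\cap\Gamma)\backslash N$ (when $\infty\notin\Lambda(\Gamma)$) requires knowing that $\mu^{PS}_N$ has finite total mass, which is stated earlier, and that the relevant dynamics is supported on a bounded piece of $N$ (Lemma \ref{jjj}); these let me truncate the $N$-integration harmlessly. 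Since the statement is quoted from \cite[Prop.~6.2]{OS} and the parallel computation is carried out in \cite[Sec.~7]{KO2}, the cleanest route is to cite that computation and indicate only the two substitutions above; a fully self-contained proof would essentially reproduce the Patterson-measure change-of-variables from Section 6.
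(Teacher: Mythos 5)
The paper does not actually prove this proposition: it is stated as ``a special case of \cite[Prop.~6.2]{OS} (see also \cite[Sec.~7]{KO2})'' and the argument is deferred to those references. So there is no in-paper proof to compare against; you are, if anything, going further than the paper.

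Your high-level strategy matches what a complete proof would do: unfold $m^{\BR}_N(\Psi_\e^{k})$ to $\tilde m^{\BR}_N(\psi_\e^{k})$ and express it in $KAN$-coordinates, swap the order of integration, recognize that as $\e\to 0$ the integral against $\psi_\e$ concentrates where $k'a_yn_xk\approx e$ (forcing $k'\approx k^{-1}$, $y\approx 1$, $x\approx 0$), and use the continuity-set hypothesis on $\Omega^{-1}(0)$ to pass to the limit and collect the $O(\e)$ terms. That is the correct skeleton. However, two of your intermediate assertions are off. First, the weak limit of $m^{\BR}_N(\Psi_\e^{k})\,dk$ is $d\nu_i(k^{-1}(0))$ on $K$ \emph{with no $\|v_0k\|^\delta$ factor}; the stray $\|v_0k\|^\delta$ in your parenthetical would, if carried through, produce $\nu_i(\Omega^{-1}(0))$ on the right rather than $\int_{\Omega^{-1}}\|v_0k^{-1}\|^{-\delta}\,d\nu_i(k(0))$. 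The norm factor only enters because you are integrating $m^{\BR}_N(\Psi_\e^{k})$ against the specific test function $k\mapsto\|v_0k\|^{-\delta}$, not because of any conformal cocycle relating $\|\cdot\|$ to $\nu_i$. Second, the strong wave front lemma plays no role in this proposition; it is used afterward to sandwich $F_T^\Omega$ between the $\Omega_{\e\pm}$-counts. The boundary control needed here is only the measure estimate \eqref{eq:600}, which follows from the hypothesis $\nu_i(\partial(\Omega^{-1}(0)))=0$ and has nothing to do with wave fronts. Likewise the worry about non-compactness of $(N\cap\G)\ba N$ is moot: after unfolding, one integrates $\psi_\e\in C_c^\infty(G)$ against $\tilde m^{\BR}_N$, and compact support makes the $N$-integral converge trivially; Lemma \ref{jjj} and finiteness of $\mu^{\PS}_N$ are not needed.
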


\begin{prop}\label{ftt} For all $T\gg 1$,
 $$\la F_T^\Omega, \Psi_\e\ra_{L^2(\G\ba G)}=\frac{\kappa_\G \cdot T^\delta}{\delta}\int_{k\in \Omega^{-1}} \frac{ d \nu_i(k(0))}{\|v_0k^{-1}\|^\delta}  + O(\e T^\delta+ \e^{-9/2} T^{\delta -2{\s}_\G/5}))  .
$$
\end{prop}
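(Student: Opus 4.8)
The plan is to unfold the pairing $\la F_T^\Omega, \Psi_\e\ra$ and reduce it to an integral of horocycle averages to which Theorem \ref{main2} (equivalently Theorem \ref{m1} or Theorem \ref{main3}) applies. First I would unfold the counting function: since $F_T^\Omega(g)=\sum_{\gamma\in N\cap\G\ba\G}\chi_{S_T(\Omega)}(v_0\gamma g)$, we have
\[
\la F_T^\Omega,\Psi_\e\ra_{L^2(\G\ba G)}=\int_{(N\cap\G)\ba G}\chi_{S_T(\Omega)}(v_0 g)\,\Psi_\e(g)\,dg,
\]
and then use the Iwasawa-type decomposition $G=NAK$ adapted to $v_0$ (recall $\iota(N)$ is the stabilizer of $v_0$) to write $g=n_x a_y k$ with $dg=$ (the appropriate Haar measure in these coordinates). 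Since $v_0 n_x a_y k = y^{-\delta/2\cdot(\text{scaling})}\cdots$ — more precisely $v_0 a_y$ scales $v_0$ by a fixed power of $y$ and $v_0 k$ traces out the $K$-orbit — the condition $\|v_0 g\|<T$ becomes a condition $y< (T/\|v_0 k\|)^{1/\delta}\cdot(\text{const})$ on $y$ depending on $k\in K$. The $x$-integral over $(N\cap\G)\ba N$ of $\Psi_\e(n_x a_y k)$ is exactly $(\Psi_\e^k)^N(a_y)$ in the notation of Section 5.

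Second I would apply Theorem \ref{m1} to $\Psi_\e^k$ (with $\psi=\Psi_\e^k\in C_c^\infty(\G\ba G)$): for $0<y<1$,
\[
(\Psi_\e^k)^N(a_y)=\kappa_\G\, m^{\BR}_N(\Psi_\e^k)\, y^{1-\delta}+O(\mathcal S_3(\Psi_\e^k)\, y^{1-\delta+2{\s}_\G/5}),
\]
using the form of Theorem \ref{main3}/\ref{brd} so that the main term is directly in terms of $m^{\BR}_N$. Then I integrate in $y$ from $0$ up to the cutoff $\approx (T/\|v_0k\|)^{1/\delta}$ and in $k$ over (essentially) $\Omega^{-1}$. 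The main term integral is $\int_0^{c(T/\|v_0k\|)^{1/\delta}} y^{1-\delta}\frac{dy}{y^2}$ (the $y^{-2}$ from Haar measure), which evaluates to a constant times $(T/\|v_0k\|)^{(\delta-1)/\delta \cdot \text{(something)}}$; bookkeeping the exponents gives the clean $T^\delta/\delta$ factor after integrating $\int_{k\in\Omega^{-1}}\|v_0k^{-1}\|^{-\delta}\,d\nu_i(k(0))$ via Proposition \ref{special}. The error term integrates against $y^{-2}$ to give a contribution of size $\mathcal S_3(\Psi_\e^k)\cdot T^{\delta-2{\s}_\G/5}$ (up to the exponent arithmetic), and since $\Psi_\e$ is an $\e$-approximate identity we have $\mathcal S_3(\Psi_\e^k)\ll \e^{-9/2}$ uniformly in $k$, producing the stated $O(\e^{-9/2}T^{\delta-2{\s}_\G/5})$.

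Third, the $O(\e T^\delta)$ error comes from two sources which I would treat separately: (i) the discrepancy between integrating $k$ over $\Omega^{-1}$ versus the thickened $\Omega_{\e\pm}^{-1}$, controlled by \eqref{eq:600} which says $\nu_i(\Omega_{\e+}^{-1}(0)-\Omega_{\e-}^{-1}(0))\le\e$, giving an $O(\e)$ relative error in the $k$-integral hence $O(\e T^\delta)$; and (ii) the contribution of the range $y\ge 1$ where Theorem \ref{m1} does not apply — but this corresponds to a compact part of $\G\ba G$ and $v_0 g$ bounded, contributing $O(1)$, negligible. One also absorbs the $O(\e)$ factors coming from Proposition \ref{special} and from $a_y$-scaling constants being determined up to $(1+O(\e))$.

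The hard part will be the careful exponent bookkeeping: tracking how $\|v_0 n_x a_y k\|$ depends on $y$ and $k$ (the power of $y$ is governed by how $a_y$ acts on the null vector $v_0$, which via $\iota$ and the structure of $\SO_Q$ scales $v_0$ by $y$ or $y^{\pm 1}$ up to normalization), pinning down that the $y$-integral of the main term $y^{1-\delta}\cdot y^{-2}$ up to the $T$-dependent cutoff produces precisely $T^\delta/\delta$ (and not some other power), and verifying that the error exponent $\delta-2{\s}_\G/5$ survives the $y$-integration and the reconciliation with the final $T^{\delta-4{\s}_\G/55}$ in Theorem \ref{ec} (the extra loss from $5$ to $55$ absorbs the optimization over $\e$, balancing $\e T^\delta$ against $\e^{-9/2}T^{\delta-2{\s}_\G/5}$, which gives $\e=T^{-(2{\s}_\G/5)/(11/2)}=T^{-4{\s}_\G/55}$). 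All of this is routine once the coordinate change is set up correctly, and — as the authors note — is a verbatim repetition of Section 8 of \cite{LO}.
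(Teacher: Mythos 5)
Your outline follows the same unfolding--integration strategy as the paper's sketch: unfold $\la F_T^\Omega,\Psi_\e\ra$ via the $NAK$ coordinates adapted to $v_0$ (so the $x$-integral collapses to the closed-horocycle average $(\Psi_\e^k)^N(a_y)$), apply Theorem~\ref{main2} together with Theorem~\ref{brd} to that inner integral, integrate in $y$ and $k$, and replace the $k$-integral of $m^{\BR}_N(\Psi_\e^k)/\|v_0k\|^\delta$ by $\int_{\Omega^{-1}}d\nu_i(k(0))/\|v_0k^{-1}\|^\delta$ up to $(1+O(\e))$ via Proposition~\ref{special}; you also correctly note $\mathcal S_3(\Psi_\e)\ll\e^{-9/2}$ and that the $\e$-optimization giving $T^{\delta-4\s_\G/55}$ belongs to Theorem~\ref{ec}, not to Proposition~\ref{ftt} itself.

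The one concrete slip is the $y$-cutoff. Since $N$ stabilizes $v_0$ and (in the adjoint representation) $v_0 a_y = y^{-1}v_0$, the constraint $\|v_0 n_x a_y k\|<T$ reads $\|v_0k\|/y < T$, i.e.\ $y>\|v_0k\|/T$: a \emph{lower} bound with exponent $1$, not the upper bound $y<(T/\|v_0k\|)^{1/\delta}$ you propose. This is not merely cosmetic: your main-term integral $\int_0^{\text{cutoff}} y^{1-\delta}\,y^{-2}\,dy=\int_0^{\text{cutoff}} y^{-1-\delta}\,dy$ diverges at $0$, so the computation as written does not close. With the correct region one gets
$\int_{\|v_0k\|/T}^{\infty} y^{-1-\delta}\,dy = \tfrac{T^\delta}{\delta\,\|v_0k\|^\delta}$
for the main term and
$\int_{\|v_0k\|/T}^{\infty} y^{-1-\delta+2\s_\G/5}\,dy \ll T^{\delta-2\s_\G/5}$
for the error, after which the rest of your argument (Proposition~\ref{special} contributing the $O(\e T^\delta)$, and $\e^{-9/2}$ from $\mathcal S_3(\Psi_\e)$) matches the paper.
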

\begin{proof} We only sketch a proof here and refer to \cite{LO} for details.
\begin{align*}\la F_T^{\Omega}, \Psi_\e\ra_{L^2(\G\ba G)}
&=  \int_{k\in \Omega} \int_{y>\tfrac{\|v_0k\|}{T}}
 \left(\int_{N\cap \Gamma\ba N} \Psi_\e (n_x a_y k) dx \right) y^{-2} dy dk
.\end{align*}

By Theorem \ref{main2} and Corollary \ref{brd}, 
\begin{align*}\int_{N\cap \Gamma\ba N} \Psi_\e (n_x a_y k) dx = \kappa_\G\cdot  m^{\BR}_N(\Psi_\e^{k}) \cdot y^{1-\delta}
+O(\e^{-9/2} y^{(1-\delta)+ 2{\s}_\G/5} )\end{align*}
as $\mathcal S_3(\Psi_\e)=\e^{-9/2}$.
We then  deduce, using Proposition \ref{special},
\begin{align*}&\la F_T^\Omega, \Psi_\e\ra_{L^2(\G\ba G)}=
\frac{\kappa_\G \cdot T^\delta}{\delta} \int_{k\in \Omega^{-1}} \frac{ d \nu_i(k(0))}{\|v_0k^{-1}\|^\delta} (1+O(\e))
  +O(\e^{-9/2}T^{\delta-2{\s}_\G/5})  .
\end{align*} 
\end{proof}

By Proposition \ref{ftt},
\begin{align*} 
\la F_{(1+\e)T}^{\Omega_{\e\pm}}, \Psi_\e \ra 
&= \frac{\kappa_\G \cdot T^{\delta}}{\delta} \int_{k\in \Omega^{-1}_{\e \pm}} 
\frac{ d \nu_i(k(0))}{\|v_0k^{-1}\|^\delta} 
  +O(\e T^\delta + \e^{-9/2} T^{\delta-2{\s}_\G/5}).
\end{align*}

Therefore by solving $\e^{-9/2}T^{-2{\s}_\G/5}=\e$ for $\e$, we finish the proof of Theorem \ref{ec}.

\subsection{}
Let $Q$ be a ternary indefinite quadratic form over $\Q$  and $v_0\in \Q^3$ be any vector
such that $Q(v_0)=0$. 
 Let $\G< \SO_Q(\z)$ be a finitely generated subgroup
with $\delta>1/2$. 

 For a square-free integer $d$, 
consider the subgroup of $\G$ which stabilizes $v_0$ mod $d$:
$$\G_d:=\{\gamma\in \G: v_0 \gamma \equiv v_0 \mod d\}.$$
Clearly it satisfies $\text{Stab}_{\G} {v_0} =\text{Stab}_{\G_d} {v_0} $. 

By Bourgain, Gamburd and Sarnak \cite{BGS}, $L^2(\G_d\ba \bH^2)$
has a uniform spectral gap, that is, there exists ${\s}_0>0$ such that
if the second smallest eigenvalue of
the Laplacian in  $L^2(\G_d\ba \bH^2)$ is $s_1(d)(1-s_1(d))$, then $s_1 (d) +{\s}_0  <\delta $
for all square-free $d$ (note that $\delta(1-\delta)$ is the bottom of the spectrum
for all $\G_d$).

Set \begin{equation*} \Xi_{v_0}(\G, \Omega):=\frac{\kappa_{\iota^{-1}(\G)}}{\delta} 
\int_{k\in \Omega^{-1}} \frac{ d\nu_i^{\Gamma}(k(0))} {\|v_0k^{-1} \|^{\delta}} . \end{equation*}

Since the Patterson measure $\nu_i^\Gamma$ is normalized
so that $\phi_0^{\Gamma}(e)=|\nu_i^\Gamma|$ and $\|\phi_0^\G\|_2=1$,
we note that
$$\nu_i^{\Gamma_d}=\frac{1}{\sqrt{[\G:\G_d]}}\nu_i^\Gamma .$$
 Therefore
$\kappa_{\iota^{-1}(\G_d)}= \frac{1}{\sqrt{[\G:\G_d]}}\kappa_{\iota^{-1}(\G)}$ and
hence
$$\Xi_{v_0}(\G_d,\Omega)=\frac{\Xi_{v_0}(\G,\Omega)}{{[\G:\G_d]}}.$$

Hence Theorem \ref{ec} implies Theorem \ref{ec2m}.


\bibliographystyle{plain}

\end{document}